\documentclass[a4paper,10pt]{article}




\usepackage{indentfirst,amsmath,amsfonts,amstext,amssymb,amscd,bezier,amsthm}
\usepackage[latin1]{inputenc}
\usepackage[dvips]{graphicx}
\usepackage[latin1]{inputenc}
\usepackage{setspace}
\usepackage{cases}
\usepackage{graphicx}
\usepackage{tabularx}
\usepackage{hyperref}
\usepackage{longtable}
\usepackage[usenames,dvipsnames]{pstricks}
\usepackage{epsfig}
\usepackage{pst-grad}
\usepackage{pst-plot}
\usepackage[all]{xy}
\usepackage{hyperref}
\usepackage[T1]{fontenc}
\usepackage{ae,aecompl}
\usepackage{indentfirst}
\usepackage{xspace}
\usepackage{color}
\usepackage{psfrag}
\usepackage{mathrsfs}
\usepackage{upgreek}                            
\usepackage{makeidx}                            
\usepackage{fancyhdr}
\usepackage{fancybox}
\usepackage[rm]{titlesec}
\usepackage{float}   
\usepackage{enumerate}     


\input xy
\xyoption{all}

\newcommand{\F}{\mathbb{F}}

\newcommand{\ff}{\mathcal{F}}
\newcommand{\cF}{\mathcal{F}}

\newcommand{\cH}{\mathcal{H}}

\newcommand{\Aut}{\mathrm{Aut}}

\newcommand{\kk}{\mathbb K}

\newcommand{\Z}{\mathbb Z}

\newcommand{\divi}{\operatorname{div}}
\newcommand{\aut}{\operatorname{Aut}}
\newcommand{\supp}{\operatorname{Supp}}

\theoremstyle{plain}
\newtheorem{thm}{Theorem}[section]
\newtheorem{defi}[thm]{Definition}
\newtheorem{prop}[thm]{Proposition}
\newtheorem{lem}[thm]{Lemma}
\newtheorem{cor}[thm]{Corollary}
\newtheorem{rem}[thm]{Remark}

\DeclareMathOperator{\sym}{Sym}
\DeclareMathOperator{\im}{Im}


 \font\numberfont= pzcmi scaled
3000
\titleformat{\chapter}[display]
  {\normalfont\Large 
  }
  {
   \filright
   \rule[32pt]{.7\linewidth}{4pt}
   \hspace{-8pt}
   \shadowbox{
   \begin{minipage}{.15\linewidth}
     \begin{center}
          \textsl{\bf {\large \chaptertitlename}}\\
       \vspace{1ex}
       {\bf {\numberfont \thechapter}}\\
       \vspace{1ex}
     \end{center}
   \end{minipage}}
  }
  {-10pt}
  {\filcenter
           \sl
           \bf
              \Huge
     }
  [\vspace{-1cm}\singlespacing\hfill\rule{.8\textwidth}{0.5pt}\\
\vskip-2.8ex\hfill\rule{.7\textwidth}{4pt}\onehalfspacing\vspace*{-1ex}]
\titlespacing{\chapter}{0pt}{*4}{*1}

\titleformat{\section}[block]
{\normalfont\bfseries} {\thesection}{0.5em}{}

\titleformat{\subsection}[block]
{\normalfont\large\bfseries} {\thesubsection}{0.5em}{}

\setlength{\unitlength}{1cm}
\setlength{\baselineskip}{1.4\baselineskip}


\setlength{\oddsidemargin}{0.5cm}
\setlength{\textwidth}{16cm}
\setlength{\textheight}{23.5cm}
\setlength{\topmargin}{-0.7cm}
\setlength{\evensidemargin}{0.5 cm} 

\setlength{\topmargin}{-0.1 in} \setlength{\textwidth}{16cm}
\setlength{\textheight}{23cm} \setlength{\oddsidemargin}{-0.2cm}
\setlength{\evensidemargin}{-0.2cm}

\makeindex                                      
\makeatletter
\numberwithin{equation}{section}

\begin{document}

\title{The Hurwitz curve over a finite field and its Weierstrass points for the  morphism  of lines}

\author{\textbf{Nazar Arakelian} \\
 \small{CMCC, Universidade Federal do ABC, Santo Andr\'e, Brazil}  \\
 \textbf{Herivelto Borges}\\
 \small{ICMC, Universidade de S\~ao Paulo, S\~ao Carlos, Brazil}\\
 \textbf{Pietro Speziali}\\
 \small{ICMC, Universidade de S\~ao Paulo, S\~ao Carlos, Brazil}
}

\maketitle
 \begin{abstract}
For  any  smooth Hurwitz curve $\mathcal{H}_n: \, XY^n+YZ^n+X^nZ=0$ over the finite field $\F_{p}$,  an  explict  description of  its Weierstrass points for the  morphism  of lines is  presented. As a consequence, the  full automorphism  group $\Aut(\mathcal{H}_n)$, as well as the genera of all Galois subcovers  of $\mathcal{H}_n$,  with  $n\neq 3, p^r$, are computed. Finally, a question by F. Torres on plane  nonsingular maximal curves is answered. 
 \end{abstract}


\section {Introduction}\label{intro}

In many branches of mathematics, the Klein quartic $\mathcal{H}_3:\,  XY^3+YZ^3+X^3Z=0$
is a famous example of a curve with remarkable geometric and arithmetic properties (\cite{El},  \cite{Klein},  and  \cite{Shi}). 
It is known,  for instance,   that  over the field of  complex numbers  $\mathcal{H}_3$ has  $168$ automorphisms,
  and it   is the unique curve of genus $3$ attaining the
 Hurwitz bound     $|\Aut (\mathcal{X})|\leq 84(g_{\mathcal{X}}-1)$,  $g_{\mathcal{X}}\geq 2$ \cite{Wiman}. In positive characteristic, however,  the Hurwitz bound  may not be valid due to the possibility of wild ramification. An example is the fact that   $\mathcal{H}_3$ over  $\overline{\mathbb{F}}_3$   has $6048$ automorphisms. 
  Exceptional results  that  may  occur only in  positive characteristic  make the theory of curves over finite fields into a  source  of compelling problems.
 Some of  these results  impact not only in the theory itself, but also   related areas such as finite geometry, coding theory  and  number theory.

A natural generalization  of the Klein quartic is the so-called  Hurwitz curve  
$$\mathcal{H}_n:\,  XY^n+YZ^n+X^nZ=0,$$
 where  $n\geq 3$.  Over the finite field $\mathbb{F}_q$, where $q$ is a power of a prime $p\nmid n^2-n+1$, the   curve  $\mathcal{H}_n$    is smooth, and it
 has been  investigated from many points of view  (\cite{AKT}, \cite{CH} and \cite{GD}). For instance, it  is well known that  the curves $\mathcal{H}_n$ for which $n^2-n+1$ divides $q+1$ are $\mathbb{F}_{q^2}$-maximal,  i.e.,  they meet the Hasse-Weil upper bound  (\cite{AKT}, \cite{CH}).
 
 For  the smooth curve  $\mathcal{H}_n$ over a finite field,  the primary goal of this study  is to characterize  its   Weierstrass points for the  morphism  of lines. That is,  the paper  will focus on  the  special  set  of  inflection points  $P\in  \mathcal{H}_n$  for which  the intersection multiplicity  $I(P,\mathcal{H}_n \cap T_{P}\mathcal{H}_n)$ is somewhat  large.  In general, the complete characterization of this special set is  highly  desired  as it has  direct  applications in a range of topics, such as  finite geometry,  coding theory,  St\"ohr-Voloch theory and   Galois points theory.  In this manuscript, an interesting application  will be the computation  of  the full automorphism  group $\Aut(\mathcal{H}_n)$, as well as the genera of all Galois subcovers  of $\mathcal{H}_n$,  with  $n\neq 3, p^r$.
 
 A further application of our results  is related  to maximal curves. In some detail, we answer a question raised by Fernando Torres during the "Workshop on Algebraic curves and Function Fields over a Finite Field" held in Perugia in February 2015. The question was whether  plane nonsingular maximal curves that are not isomorphic neither to a Fermat nor  Hurwitz curve do exist or not. We give a positive answer to this question in Section 6, where a family of such curves, constructed via Lucas-type polynomials, is presented.

\subsection{Notation}\label{not}
Here, we fix some notation. Henceforth throughout the  text, 
\begin{itemize}
\item $p$ is  a prime  number, $\F_{p}$ is the corresponding  finite field, and $\mathbb{K}$ is the algebraic closure of $\F_{p}$
\item   the  integer $n>2$   is  such  that  $p\nmid n^2-n+1$
\item  $\mathcal{H}_n: XY^n+YZ^n+X^nZ=0$ is the  smooth Hurwitz curve defined over $\F_{p}$
\item   the function  field of  $\mathcal{H}_n$ is denoted by  $\mathbb{K}(x,y)$,   where $xy^n+y+x^n=0$
\item    $\Aut_{\mathbb{K}}(\mathcal{H}_n)$ denotes the full  automorphism group of  $\mathcal{H}_n$
\item for each point  $P\in  \mathcal{H}_n$,  $j(P):=I(P, \mathcal{H}_n  \cap T_{P}\mathcal{H}_n)$  denotes the intersection multiplicity of $\mathcal{H}_n$ and the  tangent line  $T_{P}\mathcal{H}_n$ at $P$
\item  $\Omega=\{P_1,P_2,P_3\} \subseteq \mathcal{H}_n$, where $P_1=(1:0:0)$, $P_2=(0:1:0)$ and $P_3=(0:0:1)$.

\end{itemize}

\section{Preliminaries}\label{back}
Let $\ff :  \, F(X,Y,Z)=0$ be a smooth  plane curve of degree $d$ defined over $\kk$, and let  $\kk(x,y)$ be its function field.  Assume  that $x$ is a separating variable of $\kk(x,y)$. If $(0,1,\epsilon)$ is the order sequence of  $\ff$, then the ramification divisor of $\ff$ is defined by
\begin{equation}\label{ramification}
R:=\divi\left(D^{(\epsilon)}_x(y) \right)+(1+\epsilon)\divi(dx)+3E,
\end{equation}
where $D^{(i)}_x$ is the $i$-th Hasse derivative with respect to $x$,  and $E=\sum\limits_{P \in \ff} e_P P$, with $e_P=-\min\{0,v_P(x),v_P(y)\}$, with $v_P$ denoting the discrete valuation at $P$. Note that $\deg(E)=d$ and $\deg(\divi(dx))=d(d-3)$, and then
\begin{equation}\label{degram}
\deg(R)=(1+\epsilon)d(d-3)+3d.
\end{equation}
For  any point  $P\in   \ff$,  let $T_{P}  \ff$ be  the   tangent line to $\ff$ at $P$.   If $j(P):=I(P, \ff  \cap T_{P}\ff)$  denotes the intersection multiplicity of $\ff$ and $T_{P}\ff$ at $P$,  then  it follows from   \cite[Theorem 1.5]{SV})  that   $v_P(R)\geq j(P)-\epsilon$, and equality holds if and only if $p \nmid { j(P) \choose \epsilon}$. In particular, $R$ is  an effective divisor.

Let $H$ be a subgroup of $\aut_{\kk}(\ff)$. The stabilizer of $P \in \ff$ in $H$ will be denoted by $H_P$, and the orbit of $P$ will be denoted by $H(P)$. If $t$ is a local parameter at $P$, the $i$-th ramification subgroup of $H$ at $P$ is
\begin{equation}\label{rg}
H_P^{(i)}=\{\sigma \in H_P \ | \ v_P(\sigma(t)-t) \geq i+1\}.
\end{equation}
 Here, $H_P=H_P^{(0)}\supseteq H_P^{(1)}\supseteq \cdots$, and $H_P^{(k)}=\{1\}$ for a sufficiently large $k$. Let $\ff/H$ denote the quotient curve of $\ff$ by $H$ and $g(\ff/H)$ denote its genus. The Riemann-Hurwitz formula gives
\begin{equation}\label{rh}
d(d-3)=|H|\big(2g(\ff/H)-2\big)+\sum_{P \in \ff}\sum_{i \geq 0}(|H_P^{(i)}|-1),
\end{equation}
see \cite[Theorem 11.72]{HKT}.

The following  important   results will be used in the proof of  Theorem \ref{ThmAut}.
\begin{thm}\label{Roq}(Roquette, \cite{Roq}) Let $\mathcal{X}$  be an  irreducible
curve of genus $g\geq 2$  defined over  a field of characteristic $p>g+1$.
Then  $|\Aut (\mathcal{X})|\leq 84(g_{\mathcal{X}}-1)$ holds,  except for  the hyperelliptic 
 curve $\mathbf{v}(Y^p-Y-X^2)$, with $g =\frac{1}{2}(p + 1)$ and $|\Aut (\mathcal{X})| = 2p(p^2-1)$.
\end{thm}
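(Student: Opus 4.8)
The plan is to bound $N:=|\Aut(\mathcal{X})|$ (finite, since $g\geq 2$) by analysing the Galois cover $\pi\colon\mathcal{X}\to\mathcal{X}/G=:\mathcal{Y}$, where $G=\Aut(\mathcal{X})$ and $g':=g(\mathcal{Y})$; one may assume the ground field algebraically closed, as this only enlarges $\Aut(\mathcal{X})$ and changes neither the hypotheses nor the exceptional curve. The two ingredients are the Riemann--Hurwitz identity \eqref{rh} for $\pi$,
\[
2g-2=N(2g'-2)+\sum_{P\in\mathcal{X}}d_P,\qquad d_P=\sum_{i\geq0}\bigl(|G_P^{(i)}|-1\bigr),
\]
with the $G_P^{(i)}$ the ramification subgroups \eqref{rg}, together with the structural fact that $G_P^{(1)}$ is a normal Sylow $p$-subgroup of $G_P$, the quotient $G_P/G_P^{(1)}$ is cyclic of order prime to $p$, and each $G_P^{(i)}$ with $i\geq1$ is a $p$-group (see \cite{HKT}).

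First I would dispose of the cases $g'\geq1$. If $g'\geq2$, then $2g-2\geq2N$ gives $N\leq g-1$. If $g'=1$, there is at least one branch point, over which the $N/e$ points (ramification index $e\geq2$) each contribute $d_P\geq e-1$, so $2g-2\geq N(1-1/e)\geq N/2$ and $N\leq 4(g-1)$. In both cases $N\leq 84(g-1)$ with room to spare, so the whole difficulty is concentrated in the case $g'=0$, where \eqref{rh} becomes
\[
2g-2=N\Bigl(-2+\sum_{i=1}^{r}\tfrac{d_{(i)}}{e_i}\Bigr),
\]
with $r$ the number of branch points, $e_i$ the ramification index of the $i$-th orbit, and $d_{(i)}$ the different exponent at a point of that orbit.

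If every branch point is tame, then $d_{(i)}=e_i-1$, and one is reduced to the classical combinatorial fact that the positive rational number $-2+\sum_i(1-1/e_i)$ is bounded below by $1/42$, with equality only for $(e_1,e_2,e_3)=(2,3,7)$; hence $N\leq 42(2g-2)=84(g-1)$. The decisive case is a wildly ramified branch point, with $G_P^{(1)}\neq1$. Applying \eqref{rh} to the $p$-group $G_P^{(1)}$ acting on $\mathcal{X}$ and using $d_P\geq2(|G_P^{(1)}|-1)$ yields $g\geq|G_P^{(1)}|\cdot g(\mathcal{X}/G_P^{(1)})$, so $p>g+1$ forces $g(\mathcal{X}/G_P^{(1)})=0$; a Hasse--Arf analysis of the ramification jumps (equivalently, Stichtenoth's estimates on wild inertia) then shows that $|G_P^{(1)}|\geq p^2$ would push $g$ out of the range $[2,\,p-2]$, so necessarily $|G_P^{(1)}|=p$. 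Hence each wild inertia group is an extension $\Z/p\rtimes\Z/m$ with $m\mid p-1$ and with a single ramification jump, and the identity above, combined with $2g-2\leq 2p-6$, confines $r$, the indices $e_i$, and the jump to a short explicit list. Going through it, the only configuration in which the tame threshold $1/42$ is beaten while $g\geq2$ and $p>g+1$ is the one in which $\mathcal{X}$ is the double cover of a line branched at its $p+1$ rational points and $G$ induces $\PGL_2(\fp)$ on that line; solving the resulting Artin--Schreier (equivalently, hyperelliptic) equation identifies $\mathcal{X}$ with $\mathbf{v}(Y^p-Y-X^2)$, and a direct Riemann--Hurwitz computation on the induced degree-$p$ cover returns its genus and $|\Aut(\mathcal{X})|=2p(p^2-1)$.

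The main obstacle is exactly this wildly ramified case over $\mathbb{P}^1$: one must control the wild part $\sum_{i\geq1}(|G_P^{(i)}|-1)$ of the different precisely enough to eliminate every near miss of the $1/42$ bound except the single curve above, and it is precisely the hypothesis $p>g+1$ that renders the analysis finite --- it forces the wild inertia down to $\Z/p$ --- so that without it the statement fails and the list of exceptions grows (Hermitian curves and others).
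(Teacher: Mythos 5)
The paper offers no proof of this statement: Theorem \ref{Roq} is quoted directly from Roquette \cite{Roq} and used later only as a black box, so your argument can only be measured against Roquette's original proof, whose overall strategy you do follow. The reductions you actually carry out are sound: finiteness of $G=\Aut(\mathcal{X})$, the cases $g'\geq 2$ and $g'=1$, the tame case over $\mathbb{P}^1$ via the classical $1/42$ bound, the deduction $g(\mathcal{X}/G_P^{(1)})=0$ from $g\geq |G_P^{(1)}|\,g(\mathcal{X}/G_P^{(1)})$, and the reduction of the wild inertia to order $p$ --- though this last point is only gestured at; it does follow from the quotable bound $|H|\leq \tfrac{4p}{(p-1)^2}g^{2}$ for a $p$-subgroup $H$ fixing a point, since $p-1>g$ then gives $|H|<4p$ and hence $|H|=p$, but you should say which estimate you are invoking rather than appeal vaguely to ``Hasse--Arf / Stichtenoth''.

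The genuine gap is the final step, which is exactly where the content of the theorem lies. Having written $2g-2=N\bigl(-2+\sum_i d_{(i)}/e_i\bigr)$ with each wild inertia group of the form $\Z/p\rtimes \Z/m$, $m\mid p-1$, you assert that this ``confines $r$, the indices $e_i$, and the jump to a short explicit list'' and that ``going through it'' leaves only $\mathbf{v}(Y^p-Y-X^2)$; but that enumeration is never performed, and neither is the verification that for this curve the \emph{full} automorphism group has order exactly $2p(p^2-1)$ (not merely contains a subgroup of that order), nor that every other wild configuration stays within $84(g-1)$. Without this case analysis the proposal is a plan rather than a proof: it restates Roquette's theorem precisely at the point where the work begins. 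One remark in your favour: your description of the exception as the double cover of $\mathbb{P}^1$ branched at the $p+1$ points of $\mathbb{P}^1(\F_p)$ yields $g=\tfrac12(p-1)$, consistent with Henn's case (II) for $n=p$; the value $g=\tfrac12(p+1)$ printed in the statement is a slip of the paper, not of your sketch.
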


\begin{thm}\label{Henn}(Henn, \cite{Henn})  Let $\mathcal{X}$  be an  irreducible
curve of genus $g\geq 2$. If a subgroup $G$ of $\aut (\mathcal{X})$ has order at least
$8g^3$, then $\mathcal{X}$ is birationally equivalent to one of the following plane curves.

\begin{enumerate}[\rm(I)]
\item The hyperelliptic curve $\mathbf{v}(Y^2 + Y + X^{2^k + 1})$ with $p=2$, and $g=2^{k-1}$, $k\geq 2$,  $|\aut (\mathcal{X})|=2^{2k+1}(2^k+1)$,   $\aut (\mathcal{X})$ fixes a point
$P\in \mathcal{X}$.
\item The hyperelliptic curve $\mathbf{v}(Y^2 - (X^n-X))$ with $p>2$, $n=p^k>3$, $g=\frac{1}{2}(n-1)$, $\aut (\mathcal{X})/M\cong \rm{PGL}(2,n)$, $|M|=2$, $|\aut (\mathcal{X})|=2(n+1)n(n-1)$.
\item The Hermitian curve $\mathbf{v}(Y^n + Y - X^{n + 1})$ with $n=q^t \geq 2$ and $g=\frac{1}{2}(n^2-n)$, $\aut (\mathcal{X})\cong \rm{PGU}(3,n)$,  $|\aut (\mathcal{X})|=(n^3+1)n^3(n^2-1)$.
\item The DLS curve (the Deligne-Lusztig curve arising from the Suzuki group) $\mathbf{v}(X^{n_{0}}(X^n + X) - (Y^n + Y))$ with $p=2$, $n_{0}=2^r$, $r\geq 1$, $n=2n_{0}^2$ and $g=n_{0}(n-1)$,  $\aut (\mathcal{X})\cong Sz(n)$, where $Sz(n)$ is the Suzuki group, $|\aut (\mathcal{X})|=(n^2+1)n^2(n-1)$.
\end{enumerate}
\end{thm}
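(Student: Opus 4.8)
The plan is to study the Galois quotient $\pi\colon\mathcal{X}\to\mathcal{Y}:=\mathcal{X}/G$ through the Riemann--Hurwitz formula, to use the hypothesis $|G|\ge 8g^{3}$ together with the structure of the higher ramification groups $G_{P}^{(i)}$ from \eqref{rg} in order to force an extremely rigid configuration, and then to recover $\mathcal{X}$ from the resulting constraints on the $G$--action. First I would dispose of the tame case. If the ground field has characteristic $0$, or if every ramification point of $\pi$ is tame, then the Riemann--Hurwitz formula takes its classical shape and the usual optimization (minimized by the $(2,3,7)$ configuration) yields $|G|\le 84(g-1)$; since $84(g-1)<8g^{3}$ for every $g\ge 3$, while the case $g=2$ is checked directly, this contradicts the hypothesis. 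Hence $p>0$, $p\mid |G|$, and $\pi$ has at least one wildly ramified point. (The single curve of Theorem~\ref{Roq} with $|G|>84(g-1)$ is a member of family~(II), so it already occurs in the list.)

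Next I would pin down the quotient. Writing $g'=g(\mathcal{Y})$, the Riemann--Hurwitz formula reads $2g-2=|G|\bigl(2g'-2+\sum_{j}\delta_{j}/e_{j}\bigr)$, where $Q_{1},\dots,Q_{k}$ are the branch points, $e_{j}$ is the ramification index above $Q_{j}$, and $\delta_{j}=\sum_{i\ge 0}(|G_{P_{j}}^{(i)}|-1)$ is the different exponent at a point $P_{j}$ over $Q_{j}$. Since $|G|\ge 8g^{3}$, the ratio $(2g-2)/|G|$ is positive but at most $\tfrac1{32}$, which forces $g'=0$ and
\[
0<\sum_{j}\frac{\delta_{j}}{e_{j}}-2\le\frac{1}{32}.
\]
A tame branch point contributes $\delta_{j}/e_{j}=1-1/e_{j}\in[\tfrac12,1)$, whereas a wild one contributes at least $1$; so the minuscule slack above $2$ allows only a handful of branch points --- at most three, at least one of them wild, any tame ones of very small index --- and locks $|G|$, $g$, and the local data at the ramified points tightly together.

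Now fix a point $P$ over a wild branch point. Then $S:=G_{P}^{(1)}$ is the normal Sylow $p$--subgroup of $G_{P}$, the quotient $G_{P}/S$ is cyclic, and the filtration $G_{P}^{(1)}\supseteq G_{P}^{(2)}\supseteq\cdots$ has breaks restricted by the Hasse--Arf theorem. Feeding this back into the inequality above, together with the standard bounds (Stichtenoth, Nakajima) bounding $|G_{P}|$ and $|G_{P}^{(1)}|$ by explicit polynomials in $g$, cuts the possibilities down to finitely many numerical types and shows that $G$ is generated by the stabilizers of its one or two exceptional orbits. One then identifies $G$ from its order and its $p$--local structure, acting on the set of fixed points of its Sylow $p$--subgroups: here the classification of the relevant doubly transitive (Zassenhaus--type) permutation groups enters, splitting into the solvable case, which produces the hyperelliptic Artin--Schreier curve of~(I), and the cases where $G$ involves $\PGL(2,n)$, $\PGU(3,n)$ or $\mathrm{Sz}(n)$. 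In each of the latter three the orbit data are so rigid --- a prescribed short orbit, a prescribed stabilizer, a prescribed genus --- that the fixed loci of $S$ and of a complementary cyclic subgroup furnish an explicit equation for $\mathcal{X}$, which one recognizes as the curve in~(II),~(III) or~(IV) and whose automorphism group one checks has exactly the stated order.

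The genuinely hard part is the last two moves: controlling the wild ramification filtration tightly enough --- Hasse--Arf together with the Stichtenoth--Nakajima estimates --- to reduce to finitely many numerical configurations, and then the group theory needed to recognize $G$ and to pass from an abstract action back to an explicit plane model of $\mathcal{X}$; in particular separating the Hermitian case from the Suzuki case, whose groups $\PGU(3,n)$ and $\mathrm{Sz}(n)$ have superficially similar short--orbit behaviour on the curve.
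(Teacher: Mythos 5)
First, note that the paper does not prove this statement at all: Theorem~\ref{Henn} is quoted verbatim from Henn's article \cite{Henn} (see also the exposition in \cite[Theorem 11.127]{HKT}) and is used in the paper only as a black box in the proof of Theorem~\ref{ThmAut}. So there is no internal proof to measure you against; the relevant benchmark is Henn's classification itself, and your outline does follow its broad strategy (Riemann--Hurwitz applied to $\mathcal{X}\to\mathcal{X}/G$, forcing quotient genus $0$ and very few branch points with at least one wild, then analysis of the stabilizer of a wildly ramified point and identification of $G$ among groups with a split $BN$-pair of rank one acting $2$-transitively on the short orbit).

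That said, as a proof your proposal has a genuine gap: everything that actually constitutes the theorem is deferred to steps you name but do not carry out. Concretely, (1) the reduction from ``$\sum_j\delta_j/e_j-2\le 1/32$ with one wild branch point'' to \emph{finitely many} numerical configurations requires the Stichtenoth and Nakajima bounds on $|G_P|$ and $|G_P^{(1)}|$ together with a careful case analysis of the jumps in the filtration $G_P^{(i)}$ (Hasse--Arf only controls abelian subquotients, so even this step needs more care than you indicate); (2) the claim that $G$ acts as a Zassenhaus-type, i.e.\ sharply or doubly transitive, group on the fixed points of its Sylow $p$-subgroups is precisely the delicate structural core of Henn's argument and cannot simply be invoked -- one must first show the orbit of wildly ramified points is a single short orbit on which $G$ is $2$-transitive, and then appeal to the classification of such groups ($\mathrm{PGL}(2,n)$, $\mathrm{PGU}(3,n)$, $\mathrm{Sz}(n)$, or solvable); and (3) passing from the abstract $G$-action with prescribed ramification data back to the explicit plane models in (I)--(IV), and verifying the stated automorphism group orders, is itself a nontrivial identification (e.g.\ the characterizations of the Hermitian and DLS curves by genus plus group action). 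There are also small numerical slips that would need repair in a real write-up (for instance $84(g-1)<8g^3$ fails at $g=2$, as you note only in passing, and the bound $1/32$ does not by itself exclude three tame branch points -- that case is only excluded because it falls under the Hurwitz bound already treated). In short, the skeleton is the right one, but the proposal is a roadmap to Henn's proof rather than a proof.
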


Hereafter,  we will focus on  the  smooth  curve  $\mathcal{H}_n:\, XY^n+YZ^n+X^nZ=0$  defined over $\F_{p}$, where   $n\geq 3$ and $p\nmid n^2-n+1$. 
Note that for  $\mathcal{H}_n$, equation \eqref{degram} reads
\begin{equation}\label{degramHn}
\deg(R)=(1+\epsilon)(n^2-n+1)+3(n-\epsilon).
\end{equation}

\section{Weierstrass points for the  morphism  of lines}

Let $R$ be the   ramification divisor of $\mathcal{H}_n$.  This section   provides  a complete  description of the points $P\in\supp(R)$ and   their orders  $j(P)$.

\begin{lem}\label{points} Let $\mathbb{K}(x,y)$  be    the function  field of  $\mathcal{H}_n$,  and  consider the set of points  $\{P_1,P_2,P_3\} \subseteq \mathcal{H}_n$. Then the following hold.
\begin{enumerate}[\rm(i)]
\item  For any point  $P=(a:b:c)  \in \mathcal{H}_n$,  $abc=0$ if  and only if $P\in \{P_1,P_2,P_3\}$.
\item $j(P_1)=j(P_2)=j(P_3)=n$.
\item  $\divi (x)=(n-1)P_2+P_3-nP_1$   and $\divi (y)=nP_3-(n-1)P_1-P_2$.
\item $y/x$, $y^{-1}$ and $x$ are local paramenters at $P_1$,  $P_2$, and $P_3$, respectively.
\end{enumerate}
\end{lem}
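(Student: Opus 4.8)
The plan is to read off everything directly from the affine equation of $\mathcal{H}_n$ in the three standard charts, using the order-three automorphism that cyclically permutes the homogeneous coordinates so as not to repeat essentially the same computation three times.

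First I would dispose of (i). Each of $P_1,P_2,P_3$ visibly lies on $\mathcal{H}_n$ and has a vanishing coordinate. Conversely, if $P=(a:b:c)\in\mathcal{H}_n$ and, say, $a=0$, then $XY^n+YZ^n+X^nZ=0$ collapses to $bc^n=0$, so $b=0$ or $c=0$, whence $P\in\{P_2,P_3\}$; the cases $b=0$ and $c=0$ are the same after applying the cyclic symmetry. Besides proving (i), this records what I will need for (iii): on $\mathcal{H}_n$ the function $x=X/Z$ has a zero only at points with $X=0$, i.e.\ in $\{P_2,P_3\}$, and a pole only at points with $Z=0$, i.e.\ in $\{P_1,P_2\}$; similarly $y=Y/Z$ has zeros only in $\{P_1,P_3\}$ and poles only in $\{P_1,P_2\}$.

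Next I would treat the local behaviour at $P_3$, which is the origin of the chart $Z=1$, where $\mathcal{H}_n$ becomes $f(x,y):=xy^n+y+x^n=0$. Since the partial derivative $nXY^{n-1}+Z^n$ of the defining form does not vanish at $P_3$, the point is smooth with tangent line $Y=0$; rewriting the equation as $y\bigl(1+xy^{n-1}\bigr)=-x^n$ shows that $y$ lies in the square of the maximal ideal of $\OO_{P_3}$, so $x$ is a local parameter there, and since $1+xy^{n-1}$ is a unit of $\OO_{P_3}$ we get $v_{P_3}(x)=1$ and $v_{P_3}(y)=n$. The tangent line $Y=0$ then meets $\mathcal{H}_n$ at $P_3$ in the single equation $f(x,0)=x^n=0$, so $j(P_3)=n$. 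Now I would invoke the automorphism $\varphi\colon(X:Y:Z)\mapsto(Z:X:Y)$, which fixes the defining form and sends $P_3\mapsto P_1\mapsto P_2\mapsto P_3$: transporting the $P_3$ computation by $\varphi$ (equivalently, redoing it verbatim in the charts $X=1$ and $Y=1$) gives $v_{P_1}(x)=-n$, $v_{P_1}(y)=-(n-1)$, $v_{P_2}(x)=n-1$, $v_{P_2}(y)=-1$, identifies $y/x$, $y^{-1}$, $x$ as local parameters at $P_1,P_2,P_3$ respectively (this is (iv)), and yields $j(P_1)=j(P_2)=j(P_3)=n$ (this is (ii)).

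Finally, (iii) follows by bookkeeping: the divisor of zeros and the divisor of poles of $x$ have the same degree and, by the first step, are both supported in $\{P_1,P_2,P_3\}$; the contributions already found, $(n-1)P_2+P_3$ and $nP_1$, each have degree $n$, hence there is nothing else, so $\divi(x)=(n-1)P_2+P_3-nP_1$, and the identical argument for $y$ gives $\divi(y)=nP_3-(n-1)P_1-P_2$. None of the steps is genuinely hard; the only points that need care are checking that the auxiliary factors such as $1+xy^{n-1}$ really are units in the relevant local rings and that no leading terms cancel (this is where $n\ge 3$ enters, so that $n<2n+1$), and — if one uses the symmetry shortcut rather than recomputing in each chart — keeping track of how $x$ and $y$ pull back under $\varphi$.
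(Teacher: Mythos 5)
Your proposal is correct. The paper obtains all four items in one stroke by computing the divisors cut out on $\mathcal{H}_n$ by the three coordinate lines: restricting $XY^n+YZ^n+X^nZ$ to $Z=0$, $X=0$, $Y=0$ and factoring gives $\ell_1\cdot\mathcal{H}_n=nP_1+P_2$, $\ell_2\cdot\mathcal{H}_n=nP_2+P_3$, $\ell_3\cdot\mathcal{H}_n=P_1+nP_3$, from which (i)--(iii) are read off and (iv) follows from (iii). You instead perform a local analysis at $P_3$ in the chart $Z=1$ (smoothness, the unit $1+xy^{n-1}$, hence $v_{P_3}(x)=1$, $v_{P_3}(y)=n$, tangent $Y=0$, $j(P_3)=n$) and transport it to $P_1,P_2$ by the order-three automorphism $(X:Y:Z)\mapsto(Z:X:Y)$, then assemble $\divi(x)$ and $\divi(y)$ from the valuations together with the support restriction coming from (i). The two arguments compute exactly the same intersection numbers; the paper's version is more compact and divisor-theoretic, while yours makes the local parameters of (iv) explicit (the paper leaves ``(iii) implies (iv)'' to the reader) at the modest cost of tracking how $x$ and $y$ pull back under the symmetry. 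Two small remarks: the closing degree count for $\divi(x)$ is redundant once you have the three valuations and the support statement from (i); and the parenthetical ``$n<2n+1$'' is vacuous --- the only input needed at that step is that $x$ and $y$ both vanish at $P_3$, so that $1+xy^{n-1}$ is a unit and the equation $y(1+xy^{n-1})=-x^n$ forces $v_{P_3}(y)=n\,v_{P_3}(x)$, whence $x$ (and not $y$) generates the maximal ideal.
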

\begin{proof}
Consider the lines  $\ell_1:\,  Z=0$, $\ell_2:\, X=0$, and  $\ell_3:\, Y=0$. The divisors cut out on $\mathcal{H}_n$ by these lines are
$\ell_1\cdot \mathcal{H}_n=nP_1+P_2 $, $\ell_2\cdot \mathcal{H}_n=nP_2+P_3$,  and $\ell_3\cdot \mathcal{H}_n=P_1+nP_3$.
This proves  the  first three assertions.  Clearly, (iii) implies (iv).
\end{proof}

\begin{prop}\label{prop-dx} Let $\mathbb{K}(x,y)$ be    the function  field of  $\mathcal{H}_n$ and $dx$   be the differential of $x$.  Then
\begin{equation}\label{div-dx}
\divi (dx)= \begin{cases}
(n^2-2n)P_1+(n-2)P_2,  \text{ if } p\mid n.\\
-(n+1)P_1+(n^2-1)P_2,  \text{ if } p\mid n-1.\\
-(n+1)P_1+(n-2)P_2+\sum\limits_{i=1}^{n^2-n+1}Q_i,  \text{  if  } p\nmid n(n-1),
\end{cases}
\end{equation}
 where   $Q_i=(x_i: (\frac{n}{1-n})x_i^n  :1), \text{ and   } x_i^{n^2-n+1}=-\frac{(1-n)^{n-1}}{n^n}$.

\end{prop}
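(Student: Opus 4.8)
The plan is to compute $\divi(dx)$ by first determining the divisor of $dx$ away from the coordinate points $P_1,P_2,P_3$, and then handling each $P_i$ separately via local analysis. Since $\mathcal{H}_n$ is smooth and $x$ is a separating variable, the divisor of $dx$ is effective wherever $x$ is a local parameter, and in general its local behavior is governed by $v_P(dx) = v_P(d t_P) + v_P(dx/dt_P)$ for a local parameter $t_P$. First I would exploit the implicit relation $xy^n + y + x^n = 0$. Differentiating gives $y^n\,dx + (nxy^{n-1}+1)\,dy + nx^{n-1}\,dx = 0$, so
\[
dx = -\frac{nxy^{n-1}+1}{y^n+nx^{n-1}}\,dy \quad\text{and}\quad \frac{dy}{dx} = -\frac{y^n+nx^{n-1}}{nxy^{n-1}+1}.
\]
The zeros and poles of $dx$ away from $\Omega$ are then read off from the zeros and poles of the rational function $y^n + nx^{n-1}$ and of $nxy^{n-1}+1$ on $\mathcal{H}_n$, using the fact that at an affine point $P\notin\Omega$ where $x$ is unramified as a map, $x$ itself is a local parameter and $v_P(dx)=v_P\big(\tfrac{dx}{dy}\big)$ contributes only through the vanishing locus. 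The affine points where $x$ ramifies are exactly the common zeros of the partial derivative condition, and a short computation (substituting $y^n = -(y+x^n)/x$ into $y^n+nx^{n-1}=0$) shows these are precisely the points $Q_i = (x_i : \tfrac{n}{1-n}x_i^n : 1)$ with $x_i^{n^2-n+1} = -\tfrac{(1-n)^{n-1}}{n^n}$, each a simple ramification point contributing $1$ to $\divi(dx)$; this is where the $\sum_{i=1}^{n^2-n+1} Q_i$ term comes from, and it is present precisely when $p\nmid n(n-1)$ (otherwise these points degenerate onto or disappear toward the $P_i$'s, which explains the case split).

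Next I would compute $v_{P_i}(dx)$ for $i=1,2,3$ using the local parameters from Lemma~\ref{points}(iv). At $P_3$, $x$ is a local parameter, so $v_{P_3}(dx)\ge 0$; one checks it is exactly $0$ or its exact value by the local expansion of $y$ in terms of $x$ near $P_3$ — from $xy^n+y+x^n=0$ we get $y = -x^n - x\cdot y^n = -x^n + \cdots$, so $y$ is a power series in $x$ with no constraint forcing a zero of $dx$, giving $v_{P_3}(dx)=0$ in all cases. At $P_1$, where $t = y/x$ is a local parameter, I would write $x$ as a Laurent series in $t$ (using $\divi(x) = (n-1)P_2 + P_3 - nP_1$, so $v_{P_1}(x) = -n$) and compute $v_{P_1}(dx) = v_{P_1}(x) - 1 + (\text{correction})$; the general principle $v_P(df) = v_P(f) - 1$ when $v_P(f)\ne 0$ and $p\nmid v_P(f)$, with extra vanishing when $p\mid v_P(f)$, will produce the dichotomy between the $p\mid n$ case (where $p\mid v_{P_1}(x)=-n$) and the others. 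Similarly at $P_2$, where $t = y^{-1}$ is a local parameter and $v_{P_2}(x) = n-1$, the cases $p\mid n-1$ versus $p\nmid n-1$ arise from whether $p$ divides the order of vanishing. The three displayed formulas are then assembled by combining the $P_i$-contributions with the $Q_i$-contributions, and the total degree should be checked against $\deg(\divi(dx)) = n(n-3)$ as stated after \eqref{ramification} — in all three cases one verifies $n^2-2n + n - 2 = n^2-n-2 = n(n-3)$ (wait, that is $n^2-n-2$, not $n^2-3n$; the degrees must be rechecked), so this consistency check is itself a useful guard against sign errors.

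The main obstacle I expect is the precise local computation at $P_1$ and $P_2$ in the wild cases $p\mid n$ and $p\mid n-1$: one must carefully expand $x$ (or $1/x$) as a series in the local parameter to enough order to see the exact valuation of $dx$, since naive application of $v_P(df)=v_P(f)-1$ fails when $p$ divides the leading exponent and the next term's exponent must be tracked. A clean way to organize this is to use $dx = (dx/dy)\,dy$ together with the known divisors of $x$ and $y$ from Lemma~\ref{points}(iii): since $\divi(dy/dx)$ is the divisor of the explicit rational function $-\frac{y^n+nx^{n-1}}{nxy^{n-1}+1}$, one gets $\divi(dx) = \divi(dy) + \divi\!\left(\frac{y^n+nx^{n-1}}{nxy^{n-1}+1}\right)$ modulo a symmetric computation with $\divi(dy)$ determined the same way, and the whole problem reduces to computing $v_{P_i}$ of two explicit polynomials in $x,y$ — a finite, if delicate, valuation bookkeeping that automatically splits into the three cases according to the arithmetic of $n$ and $n-1$ modulo $p$.
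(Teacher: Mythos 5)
There are two genuine problems with your plan. First, the identification of the affine points in $\supp(\divi(dx))$ is wrong as described: since $dx=-\frac{nxy^{n-1}+1}{y^{n}+nx^{n-1}}\,dy$, the zeros of $dx$ away from $\Omega$ are the points where the tangent line is vertical, i.e.\ where $f_y=nxy^{n-1}+1$ vanishes (there $y-y(P)$ is a local parameter and $v_P(dx)=v_P(f_y)$), not where $f_x=y^{n}+nx^{n-1}$ vanishes. The computation you describe --- substituting $y^{n}=-(y+x^{n})/x$ into $y^{n}+nx^{n-1}=0$ --- gives $y=(n-1)x^{n}$ and $x^{n^2-n+1}=-n/(n-1)^{n}$, which are the zeros of $dy$ (the branch points of the $y$-map), not the points $Q_i=(x_i:\tfrac{n}{1-n}x_i^{n}:1)$ of the statement; those come from $nxy^{n-1}+1=0$ together with $f=0$, which forces $y=\tfrac{n}{1-n}x^{n}$ and $x^{n^2-n+1}=-\tfrac{(1-n)^{n-1}}{n^{n}}$. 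The same inversion reappears at the end: one has $\divi(dx)=\divi(dy)+\divi\bigl(\tfrac{nxy^{n-1}+1}{y^{n}+nx^{n-1}}\bigr)$, the reciprocal of the function you wrote. (Your parenthetical worry about degrees is unfounded: $\deg\mathcal{H}_n=n+1$, so $\deg\divi(dx)=(n+1)(n-2)=n^2-n-2$, which all three displayed cases satisfy.)

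Second, the wild cases are not actually handled. When $p\mid n$ one must show $v_{P_1}(dx)=n^2-2n$, and when $p\mid n-1$ that $v_{P_2}(dx)=n^2-1$; these cannot be read off from $v_{P_1}(x)=-n$ or $v_{P_2}(x)=n-1$, and your fallback of ``expanding $x$ in the local parameter to enough order'' would require tracking the expansion to order roughly $n^2$, for which you give no mechanism. Your proposed ``clean'' alternative is circular: determining $\divi(dy)$, or equivalently the valuation of $nxy^{n-1}+1$ at $P_2$ and of $y^{n}+nx^{n-1}$ at $P_1$, involves exactly the same wild contributions. The paper sidesteps all local expansions by computing the intersection divisor of $\mathcal{H}_n$ with the degree-$n$ curve $nXY^{n-1}+Z^{n}=0$ (the projectivization of $f_y$) via the rational parametrization $(u:v)\mapsto(-u^{n}:nv^{n}:nuv^{n-1})$: substituting into $XY^{n}+YZ^{n}+X^{n}Z$ and factoring gives the multiplicities at $P_1$ and $P_2$ uniformly in all three cases, and $\divi(dx)$ then follows from this intersection divisor together with Lemma~\ref{points} and the relation between $dx$ and $dy$. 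To complete your argument you need either that device or an honest high-order local computation at $P_1$ and $P_2$ in the cases $p\mid n$ and $p\mid n-1$.
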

\begin{proof} Consider  the curve  $\mathcal{C}:\, nXY^{n-1}+Z^n=0$, and let $\mathcal{C} \cdot  \mathcal{H}_n $ be   the corresponding  divisor cut out on $\mathcal{H}_n$. If $p\mid n$, then the computation of $\mathcal{C} \cdot  \mathcal{H}_n$ is trivial.   Otherwise, $\mathcal{C}$ is  a rational curve  and the parametrization $\phi:\mathbb{P}^1 \longrightarrow \mathcal{C}$ given by   $(u:v)\mapsto  (-u^n:nv^n:nuv^{n-1})$  gives
\begin{equation}\label{div-dx-aux}
 \mathcal{C} \cdot  \mathcal{H}_n=\begin{cases}
n^2P_1+nP_2,  \text{ if } p\mid n.\\
(n-1)P_1+(n^2+1)P_2,  \text{ if } p\mid n-1.\\
(n-1)P_1+nP_2+\sum\limits_{i=1}^{n^2-n+1}Q_i,  \text{  if  } p\nmid n(n-1),
\end{cases}
\end{equation}
where   $Q_i=(x_i: (\frac{n}{1-n})x_i^n  :1), \text{ and   } x_i^{n^2-n+1}=-\frac{(1-n)^{n-1}}{n^n}$. Now direct computation using Lemma \ref{points}, the divisor  \eqref{div-dx-aux}, and  $dx=-(\frac{nxy^{n-1}+1}{y^n+nx^{n-1}})dy$ proves   \eqref{div-dx}.
\end{proof}

\begin{cor}\label{cor-bridge} For the curve  $ \mathcal{H}_n$, the ramification  divisor in  \eqref{ramification} is given by
\begin{equation}
R=\divi\left((nxy^{n-1}+1)^{\epsilon+1}D^{(\epsilon)}_x(y) \right)+\Big((\epsilon + 1)n^2+(1 - 2\epsilon)n - 3\Big)P_1+\Big((\epsilon +1)n - 2\epsilon + 1\Big)P_2.
\end{equation}
\end{cor}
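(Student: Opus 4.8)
The plan is to unwind the defining formula \eqref{ramification} of the ramification divisor $R$ of $\mathcal{H}_n$, keeping the term $\divi(D^{(\epsilon)}_x(y))$ untouched, and to rewrite the remaining contributions $3E$ and $(1+\epsilon)\divi(dx)$ so that whatever is not supported on $\{P_1,P_2\}$ is absorbed into the principal divisor of the explicit function $nxy^{n-1}+1$.

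First I would determine $E=\sum_{P}e_P P$. By Lemma \ref{points}(i), $x$ and $y$ are regular at every point of $\mathcal{H}_n$ other than $P_1,P_2$, so $e_P=0$ there; and Lemma \ref{points}(iii) gives $v_{P_1}(x)=-n$, $v_{P_1}(y)=-(n-1)$, $v_{P_2}(x)=n-1$, $v_{P_2}(y)=-1$, whence $e_{P_1}=n$ and $e_{P_2}=1$. Hence $3E=3nP_1+3P_2$.

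Next I would compute $\divi(nxy^{n-1}+1)$. This function is the restriction to $\mathcal{H}_n$ of $F/Z^n$, where $F=nXY^{n-1}+Z^n$ is the form defining the curve $\mathcal{C}$ of Proposition \ref{prop-dx}; since the line $Z=0$ meets $\mathcal{H}_n$ in $nP_1+P_2$ (Lemma \ref{points}), one gets $\divi(nxy^{n-1}+1)=\mathcal{C}\cdot\mathcal{H}_n-n(nP_1+P_2)$. Substituting the three cases of \eqref{div-dx-aux}, this is $0$ when $p\mid n$ (indeed $nxy^{n-1}+1\equiv 1$ then), equals $-(n^2-n+1)(P_1-P_2)$ when $p\mid n-1$, and equals $-(n^2-n+1)P_1+\sum_{i=1}^{n^2-n+1}Q_i$ when $p\nmid n(n-1)$. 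Comparing with \eqref{div-dx}, the crucial point is that in all three cases $\divi(dx)-\divi(nxy^{n-1}+1)$ equals the \emph{same} divisor $(n^2-2n)P_1+(n-2)P_2$: the surplus $P_2$-coefficient (when $p\mid n-1$) and, more strikingly, the points $Q_i$ (when $p\nmid n(n-1)$) entering $\divi(dx)$ are exactly those entering $\divi(nxy^{n-1}+1)$, and they cancel.

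It then remains to substitute the uniform identity $\divi(dx)=\divi(nxy^{n-1}+1)+(n^2-2n)P_1+(n-2)P_2$ into \eqref{ramification} together with $3E=3nP_1+3P_2$, and to collect the resulting $P_1$- and $P_2$-terms, namely $(1+\epsilon)\bigl((n^2-2n)P_1+(n-2)P_2\bigr)+3nP_1+3P_2$, thereby obtaining $R=\divi\bigl((nxy^{n-1}+1)^{\epsilon+1}D^{(\epsilon)}_x(y)\bigr)$ plus the claimed multiples of $P_1$ and $P_2$. I do not expect a genuine obstacle here: this is a bookkeeping rearrangement of \eqref{ramification} built on the already-established divisor computations \eqref{div-dx} and \eqref{div-dx-aux}, and the only care needed is with the $\epsilon$-linear coefficients at $P_1,P_2$ and with verifying that the three characteristic cases really do collapse to the single uniform identity above.
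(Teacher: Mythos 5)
Your strategy is exactly the paper's: determine $E$, establish the uniform identity $\divi(dx)=\divi(nxy^{n-1}+1)+(n^2-2n)P_1+(n-2)P_2$ from Proposition \ref{prop-dx} and \eqref{div-dx-aux}, and substitute into \eqref{ramification}. Your intermediate steps are all correct: $\divi(nxy^{n-1}+1)=\mathcal{C}\cdot\mathcal{H}_n-n(nP_1+P_2)$, the three-case cancellation, and the values $e_{P_1}=n$, $e_{P_2}=1$, which are forced by Lemma \ref{points}(iii) together with $e_P=-\min\{0,v_P(x),v_P(y)\}$ (and are consistent with $\deg E=d=n+1$), so indeed $3E=3nP_1+3P_2$.

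The problem is your final sentence. Collecting $(1+\epsilon)\bigl((n^2-2n)P_1+(n-2)P_2\bigr)+3nP_1+3P_2$ gives the $P_1$-coefficient $(1+\epsilon)(n^2-2n)+3n=(\epsilon+1)n^2+(1-2\epsilon)n$, which is $3$ \emph{more} than the coefficient $(\epsilon+1)n^2+(1-2\epsilon)n-3$ appearing in the statement (the $P_2$-coefficient does match). So the bookkeeping you outline does not ``obtain the claimed multiples of $P_1$ and $P_2$''; it produces a formula differing from the statement by $3P_1$, and you assert the match without performing the check. The discrepancy in fact lies in the printed statement rather than in your $E$: the paper's own proof takes $E=(n-1)P_1+P_2$, which contradicts the definition of $e_P$ and $\deg E=d$, and one sees the printed formula cannot equal $R$ because its right-hand side has degree $(\epsilon+1)n^2+(2-\epsilon)n-2\epsilon-2$, i.e.\ exactly $3$ less than $\deg R$ in \eqref{degramHn} (the divisor of a rational function has degree zero); likewise, in the case $p\mid n$ it would force $v_{P_1}(R)<j(P_1)-\epsilon$. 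Thus your computation, carried honestly to the end, proves the corrected identity with $P_1$-coefficient $(\epsilon+1)n^2+(1-2\epsilon)n$ and exposes a $3P_1$ slip in Corollary \ref{cor-bridge}; as written, however, your last step silently equates two different numbers. You should state and prove the corrected coefficient explicitly, noting that the slip is immaterial for the sequel, since Theorem \ref{TeoWei} only uses $R$ away from $P_1,P_2,P_3$ together with \eqref{degramHn}.
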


\begin{proof}
Lemma \ref{points}(iii) gives the divisor $E=(n-1)P_1+P_2$.  Combining   Proposition  \ref{prop-dx} and equation   \eqref{div-dx-aux} on its proof, we obtain
$\divi (dx)=\divi (nxy^{n-1}+1)+(n^2-2n)P_1+(n-2)P_2$.  The result follows after substituting $E$ and $\divi (dx)$ in   \eqref{ramification}.
\end{proof}

      \begin{lem}\label{Hasse} Let $\mathbb{K}(x,y)$  be the function field of $\mathcal{H}_n$, where $f(x,y)=xy^n+y+x^n=0$.  If  
      $D_x^{(i)}y$ denotes the $i$-th Hasse derivative of $y$ with respect to $x$, then

\begin{numcases}{(nxy^{n-1}+1)^{\epsilon+1}  D_x^{(\epsilon)}y=}
mx( y^{n(p^r +1)-p^r} -x^{n-p^r+1}),  \text{ if }\epsilon= p^r=n/m,  \text{ with  } r\geq 1 \text{ and  } p\nmid m. \label{Lem1}\\
y^{n-1}(y^n+x^{n-1})(xy^{n-1}+1),  \text{ if }\epsilon= p=2 \text{ and  } n\equiv 1 \mod 4.\label{Lem2}\\
x(y^{3n-2}+x^{n-3}),  \text{ if }\epsilon= p=2 \text{ and  } n\equiv 3 \mod 4.\label{Lem3}\\
 f_xf_yf_{xy}-(f_x^2f_{yy}+f_y^2f_{xx})/2,  \text{ if }\epsilon=  2\neq p \text{ and  } p \nmid n.\label{Lem4}
\end{numcases}

\end{lem}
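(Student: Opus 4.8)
\emph{Proof strategy.} The plan is to reduce the whole statement to a recursive computation of the Hasse derivatives $D_x^{(j)}y$. The starting point is that, $x$ being a separating variable, the order sequence of $\mathcal{H}_n$ is $(0,1,\epsilon)$ with $\epsilon$ the least integer $j\geq 2$ for which $D_x^{(j)}y\not\equiv 0$ in $\mathbb{K}(x,y)$: the $3\times 3$ Wronskian of $1,x,y$ formed with $D_x^{(0)},D_x^{(1)},D_x^{(j)}$ is lower triangular with diagonal entries $1,1,D_x^{(j)}y$, since $D_x^{(a)}x=0$ for $a\geq 2$. So it suffices to compute the $D_x^{(j)}y$, to locate in each case the first $j\geq 2$ where it fails to vanish (that $j$ being $\epsilon$), and then to identify $(nxy^{n-1}+1)^{\epsilon+1}D_x^{(\epsilon)}y$.

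To obtain a recursion I would apply $D_x^{(j)}$ to $xy^n+y+x^n=0$. Using the Leibniz rule, $D_x^{(a)}x=0$ for $a\geq 2$, and $D_x^{(j)}(x^n)=\binom{n}{j}x^{n-j}$, one gets
\[
x\,D_x^{(j)}(y^n)+D_x^{(j-1)}(y^n)+D_x^{(j)}y+\binom{n}{j}x^{n-j}=0,
\qquad
D_x^{(j)}(y^n)=\sum_{i_1+\cdots+i_n=j} D_x^{(i_1)}y\cdots D_x^{(i_n)}y .
\]
For $j=1$ this recovers $D_x^{(1)}y=-f_x/f_y=-(y^n+nx^{n-1})/(nxy^{n-1}+1)$, where $f_y=nxy^{n-1}+1$; note that $f_y=1$ precisely when $p\mid n$, which is exactly what singles out case \eqref{Lem1}.

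I would then split into the four cases. If $p\nmid n$ and $p\neq 2$, iterating the recursion once more (equivalently, the standard implicit-differentiation formula for the second derivative on a plane curve) gives $(f_y)^{3}D_x^{(2)}y=f_xf_yf_{xy}-\tfrac12(f_x^{2}f_{yy}+f_y^{2}f_{xx})$, which is not identically zero on $\mathcal{H}_n$; hence $\epsilon=2$ and \eqref{Lem4} follows. If $p=2$ and $n$ is odd, the recursion at $j=2$ reduces in characteristic $2$ to $(xy^{n-1}+1)D_x^{(2)}y=\binom{n}{2}\bigl(xy^{n-2}(D_x^{(1)}y)^{2}+x^{n-2}\bigr)+y^{n-1}D_x^{(1)}y$, where $\binom{n}{2}$ is even exactly when $n\equiv 1\pmod 4$; this separates \eqref{Lem2} from \eqref{Lem3}. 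In either subcase $D_x^{(2)}y\neq 0$, so $\epsilon=2$, and substituting $D_x^{(1)}y$, multiplying by $(nxy^{n-1}+1)^{3}=(xy^{n-1}+1)^{3}$, clearing the remaining denominator, and reducing modulo $xy^n+y+x^n$ yields the stated polynomials. Finally, if $p\mid n$, write $n=mp^r$ with $p\nmid m$, $r\geq 1$; then $f_y=1$ and $D_x^{(1)}y=-y^n$, and I would show by induction on $j$ that $D_x^{(j)}y=0$ for $2\leq j\leq p^r-1$: in the recursion, every term not already killed by the inductive hypothesis carries one of the factors $\binom{n}{j}$, $\binom{n}{j-1}$, or $n$, all $\equiv 0\pmod p$ for such $j$ (the binomials by Lucas' theorem, since $p^r\mid n$), forcing $D_x^{(j)}y=0$. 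At $j=p^r$ one has instead $\binom{n}{p^r}\equiv m\not\equiv 0\pmod p$, and the recursion expresses $D_x^{(p^r)}y$ as an explicit nonzero polynomial in $x$ and $y$; hence $\epsilon=p^r$, and since $(f_y)^{\epsilon+1}=1$, a final reduction modulo $xy^n+y+x^n$ puts it into the form \eqref{Lem1}.

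The main obstacle, beyond routine bookkeeping, is twofold: controlling the Leibniz expansion of $D_x^{(j)}(y^n)$ precisely enough to run the inductive vanishing in the case $p\mid n$, and carrying out the algebraic reductions modulo $xy^n+y+x^n$ needed to rewrite the rational functions of cases \eqref{Lem2} and \eqref{Lem3} (and the polynomial of \eqref{Lem1}) in the compact forms stated. I do not anticipate any conceptual difficulty beyond these computations.
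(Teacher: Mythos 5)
Your proposal is correct and follows essentially the same route as the paper's own (very terse) proof: compute $D_x^{(1)}y$ by implicit differentiation of $xy^n+y+x^n=0$ and obtain the higher Hasse derivatives from the standard Leibniz-type properties, which is exactly what your recursion $x\,D_x^{(j)}(y^n)+D_x^{(j-1)}(y^n)+D_x^{(j)}y+\binom{n}{j}x^{n-j}=0$ together with the Lucas-theorem vanishing argument and the implicit second-derivative formula makes explicit. One caveat on the last step of the case $p\mid n$: your recursion actually yields $D_x^{(p^r)}y=m\bigl(xy^{n(p^r+1)-p^r}-x^{n-p^r}\bigr)$, which is not literally the printed right-hand side of \eqref{Lem1} and cannot be turned into it by reduction modulo $xy^n+y+x^n$ (check $n=p^r=3$: one gets $xy^9-1$, not $x(y^9-x)$); this is a typo in the statement rather than a flaw in your argument, since it is the computed form, not the printed one, that produces the relation $(xy^{n-1})^{p^r+1}+xy^{n-1}+1=0$ used in the proof of Theorem \ref{TeoWei}, so your final sentence "a final reduction puts it into the form \eqref{Lem1}" should be read with that correction.
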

      \begin{proof}
      From $xy^n+y+x^n=0$, we have  that  $D_x^{(1)}y=-\frac{y^n+nx^{n-1}}{nxy^{n-1}+1}$. Now the higher-order derivatives will follow from $D_x^{(1)}y$ and the standard computations    using the basic properties of Hasse derivative (see e.g.  \cite[ Section 5.10]{HKT}).
      \end{proof}

\begin{lem}\label{factorization-g} For $p>2$ and  $(n^2-n)(n^2-n+1) \not \equiv 0 \mod p$, let $g(T) \in \F_{p}[T]$ be the polynomial 
\begin{equation}\label{polyg}
(n-1)T^3+ (n^3 - 3n^2 + 6n - 2)T^2+ (n^3 - 3n^2 + 3n + 1)T- (n-1).
   \end{equation}

 Then $g(T)$ has  discriminant $\Delta=(n^2-n+1)^4(n^2-4n+7)^2$,
 and
 
\begin{numcases}{g(T)=}
(n-1)(T-\alpha)^3,  \text{ if } p|(n^2-4n+7)\label{pdivide},\\
(n-1)(T-\eta)(T+1/(\eta+1))(T+(\eta+1)/\eta), \text{ if } p\nmid n^2-4n+7\label{pndivide},
\end{numcases}
 where $\alpha \in \F_p$ is a primitive cubic root of unity, and  $\eta\in \kk$ is any root of $g(T)$.
 In particular,   $p\geq 7$  for  condition \eqref{pdivide}.
\end{lem}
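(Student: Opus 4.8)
The plan is to verify the two claimed factorizations of $g(T)$ by a direct, but organized, computation, exploiting the symmetry encoded in the three roots $\eta$, $-1/(\eta+1)$, $-(\eta+1)/\eta$. First I would observe that these three quantities form an orbit under the order-$3$ substitution $T \mapsto -1/(T+1)$; since $g$ has degree $3$ and leading coefficient $n-1$, it suffices to check that $g$ is (up to the factor $n-1$) invariant under this Möbius substitution, i.e. that $(T+1)^3 g(-1/(T+1)) = -g(T)$ as polynomials in $T$ (the sign coming from the fact that the substitution has "determinant" of a fixed sign). Expanding $(T+1)^3\,g\!\left(\tfrac{-1}{T+1}\right)$ and collecting coefficients reduces this to three scalar identities among the coefficients $a_3 = n-1$, $a_2 = n^3-3n^2+6n-2$, $a_1 = n^3-3n^2+3n+1$, $a_0 = -(n-1)$ of $g$; these are polynomial identities in $n$ over $\F_p$ and are checked mechanically. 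Once this invariance is established, knowing one root $\eta$ forces the other two to be $-1/(\eta+1)$ and $-(\eta+1)/\eta$, which is exactly \eqref{pndivide}; and the degenerate case where all three coincide is precisely when $\eta = -1/(\eta+1)$, i.e. $\eta^2+\eta+1=0$, giving the primitive cube root of unity $\alpha$ in \eqref{pdivide}.

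Next I would compute the discriminant. The cleanest route is to use $\Delta = -27 a_3^2 a_0^2 + \cdots$ via the standard cubic discriminant formula, or better, to compute $\mathrm{Res}(g,g')/a_3$ directly; either way it is a polynomial in $n$ that I would factor and match against $(n^2-n+1)^4(n^2-4n+7)^2$. Alternatively, and more conceptually, once the root structure is known one can write $\Delta = a_3^4 \prod_{i<j}(\rho_i-\rho_j)^2$ with $\rho_1=\eta$, $\rho_2 = -1/(\eta+1)$, $\rho_3 = -(\eta+1)/\eta$, express each difference as a rational function of $\eta$ with denominator a power of $\eta(\eta+1)$, and use the relations coming from $g(\eta)=0$ (namely $a_3\eta^3 + a_2\eta^2 + a_1\eta + a_0 = 0$) to reduce the resulting symmetric expression to the claimed closed form; but for a short proof the brute-force resultant computation in $\F_p[n]$ is probably preferable. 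The factor $(n^2-n+1)^2$ appearing (squared) is reassuring since $n^2-n+1$ is the quantity assumed coprime to $p$, so $\Delta$ does not vanish identically for that reason.

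From the discriminant, the case split is immediate: over a field $g$ has a repeated root exactly when $\Delta = 0$, and since $n^2-n+1 \not\equiv 0$, this happens iff $n^2-4n+7 \equiv 0 \bmod p$. In that case $g$ has a repeated root, and because $g/a_3$ is invariant under the fixed-point-free-on-distinct-values substitution $T\mapsto -1/(T+1)$, a repeated root must actually be a triple root fixed by the substitution, hence equal to $\alpha$ with $\alpha^2+\alpha+1=0$; this yields \eqref{pdivide}. In the complementary case $\Delta \ne 0$ the three roots are distinct and, by the orbit argument of the first paragraph, are exactly $\eta, -1/(\eta+1), -(\eta+1)/\eta$, giving \eqref{pndivide}. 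Finally, for the parenthetical remark that $p \ge 7$ in case \eqref{pdivide}: if $p=2$ it is excluded by hypothesis; if $p=3$ then $n^2-4n+7 \equiv n^2-n+1 \equiv (n+1)^2 \pmod 3$, so $p\mid n^2-4n+7$ would force $p \mid n^2-n+1$, contradicting the standing assumption; and $p=5$ forces $n^2-4n+7\equiv (n-2)^2+3\equiv 0$, i.e. $(n-2)^2\equiv 2 \pmod 5$, which is impossible since $2$ is not a square mod $5$. Hence the smallest admissible prime is $7$.

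The main obstacle I anticipate is purely computational bookkeeping: getting the coefficients of $(T+1)^3 g(-1/(T+1))$ right and correctly factoring the degree-$12$ polynomial in $n$ that arises as the discriminant. There is no conceptual difficulty — the only "idea" needed is to spot the Möbius symmetry $T \mapsto -1/(T+1)$ linking the three roots, after which everything is forced — but the algebra must be carried out carefully (ideally double-checked by specializing $n$ to a few integer values and comparing with a direct discriminant evaluation) to be confident in the stated value of $\Delta$ and in the clean trifactorization.
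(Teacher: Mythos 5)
Your central idea --- the order-$3$ M\"obius symmetry $T\mapsto -1/(T+1)$, verified through the polynomial identity $-(T+1)^3g(-1/(T+1))=g(T)$ --- is precisely the identity the paper uses to establish \eqref{pndivide}, and your discriminant computation is the same ``standard computation'' invoked there. You genuinely diverge only in the repeated-root case \eqref{pdivide} and in the justification of $p\geq 7$: the paper notes that $p\mid n^2-4n+7$ forces $n\equiv 1-2\alpha$ or $1-2\alpha^2 \pmod p$ with $\alpha$ a primitive cube root of unity, substitutes this into \eqref{polyg} to verify $g=(n-1)(T-\alpha)^3$ directly, and deduces $p\geq 7$ from $\alpha\in\F_p$, i.e.\ $p\equiv 1 \pmod 3$; you instead argue structurally from $\Delta=0$ together with the symmetry, and eliminate $p=3,5$ by congruence and quadratic-residue arguments, which is correct.

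The structural step, however, has a gap as written: invariance of $g$ under $T\mapsto -1/(T+1)$ does not by itself force a repeated root to be a \emph{triple} root. The substitution permutes the roots of $g$ preserving multiplicities, so a repeated root must be one of the two fixed points $\alpha,\alpha^2$ of the substitution (the roots of $T^2+T+1$); but a priori $g$ could have a double root at $\alpha$ and a simple root at $\alpha^2$ (or vice versa), a configuration that is equally invariant. One extra observation is needed to exclude it --- for instance, the product of the roots of $g$ is $-a_0/a_3=1$, while $\alpha\cdot\alpha\cdot\alpha^2=\alpha\neq 1$ and $\alpha\cdot\alpha^2\cdot\alpha^2=\alpha^2\neq 1$ (legitimate since $p\neq 3$ here) --- or one simply verifies the cube factorization by direct substitution, as the paper does. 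Two further routine points should be recorded: first, $0$ and $-1$ are not roots of $g$ (indeed $g(0)=-(n-1)$ and $g(-1)=n-1$, nonzero because $p\nmid n-1$), which is what makes $-1/(\eta+1)$ and $-(\eta+1)/\eta$ well defined and lets the identity transport roots to roots; this is exactly why the paper evaluates $g$ at these points. Second, in case \eqref{pndivide} one should say why the three listed values are pairwise distinct: any coincidence forces $\eta^2+\eta+1=0$, hence all three coincide, and then the order-$3$ permutation induced on the three distinct roots of $g$ would fix one root and hence be trivial, forcing all roots into the two-element set $\{\alpha,\alpha^2\}$, a contradiction. Finally, the statement asserts $\alpha\in\F_p$ in \eqref{pdivide}; in your setup this follows by reading off $\alpha=-a_2/(3(n-1))$ from the coefficient of $T^2$, and deserves a sentence. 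All of these are easily repaired, but they are the places where the sketch falls short of a complete proof.
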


\begin{proof}
The discriminant $\Delta$ is obtained by standard computation. Note
that since $p>2$ and  $p\nmid n^2-n+1$,  the condition $p\mid n^2-4n+7$ implies $p>3$.
Thus, if $\alpha \in \kk$ is a primitive cubic root of  unity, then  $1-2\alpha$  e   $1-2\alpha^2$ are the roots of $n^2-4n+7\equiv 0 \mod p$, and then     \eqref{polyg} implies  \eqref{pdivide}. Also, since
$n\in\mathbb{Z}$, it follows that $\alpha \in  \F_p$, and then $p\equiv 1 \mod 3$ gives $p\geq 7$.
For \eqref{pndivide},     one can check that  $g(0)=-g(1)=n-1 \not \equiv 0 \mod p$ and the  identities  
$$-(T+1)^3g(-1/(T+1))=g(T)=T^3g(-(T+1)/T).$$
\end{proof}

\begin{thm}\label{TeoWei}
Let  $R$   be   the ramification divisor of the smooth Hurwitz curve $\mathcal{H}_n$ defined over $\F_{p}$.
Then 
$$\supp(R)=\{P_1,P_2,P_3\} \cup \mathcal{W},$$
where $\mathcal{W}$ is characterized as follows. 

\begin{enumerate}[\rm(1)]

\item  If $p\mid n$, then
\begin{equation}\label{Wnclassico}
\mathcal{W}=\{(\lambda: t^n:t^{n-1})\,  \, |  \, \, t^{n^2-n+1}=\lambda^{n-p^r-1} \text{ and } \lambda^{p^r+1}+\lambda+1=0 \}.
\end{equation}
where $n=p^rm$, with $r\geq1$ and $p\nmid m$.  In particular,  $j(P)=p^r+1$ for all
 $P \in \mathcal{W}$.

\item   If $p\nmid n$, then

\vspace{0.3 cm}

 \begin{enumerate}[\rm(i)]
 
\item Case $p=2$. If $n\equiv 1 \mod 4$,  then $\mathcal{W}=\emptyset$. Otherwise,  
\begin{equation}\label{Wj=3 car2}
\mathcal{W}=\{(\lambda:  t^n:t^{n-1})\, \,| \, \,  t^{n^2-n+1}=\lambda^{n-3} \text{ and } \lambda^3+ \lambda+1=0 \},
\end{equation}
 and $j(P)=3$ for all $P\in \mathcal{W}$.
 
\item Case $p>2$. If $p\mid n-1$, then $\mathcal{W}=\emptyset$. Otherwise,  for the polynomial  $g$ given in Lemma \ref{factorization-g}, we have

\begin{enumerate}[\rm(a)]
\item if $p\mid(n^2-4n+7)$, then 
\begin{equation}\label{Wj=5}
\mathcal{W}=\{(t: \lambda t^n:1)\,  \, |  \, \, t^{n^2-n+1}=\lambda^{2(n+1)} \text{ and }  g(\lambda)=0 \},
\end{equation}
and $j(P)=5$ for all $P\in \mathcal{W}$.
\item if $p\nmid (n^2-4n+7)$, then 
\begin{equation}\label{Wj=3}
\mathcal{W}=\{(t: \lambda t^n:1)\, \,| \, \,  t^{n^2-n+1}=-(\lambda+1)\lambda^{-n} \text{ and } g(\lambda)=0 \},
\end{equation}
 and $j(P)=3$ for all $P\in \mathcal{W}$.
\end{enumerate}

\end{enumerate}

\end{enumerate}

\end{thm}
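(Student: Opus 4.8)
The starting point is Corollary \ref{cor-bridge}, which writes $R$ — up to contributions supported on $\{P_1,P_2\}$ — as the divisor of the explicit function $\Psi:=(nxy^{n-1}+1)^{\epsilon+1}D^{(\epsilon)}_x(y)$, together with the closed formulas for $\Psi$ provided by Lemma \ref{Hasse}. Since the ramification divisor $R$ is effective and the correction terms in Corollary \ref{cor-bridge} involve only $P_1,P_2$, one gets at once $\supp(R)=\{P_1,P_2,P_3\}\cup\mathcal W$, where $\mathcal W=\{\,Q\in\mathcal H_n\setminus\{P_1,P_2,P_3\}:v_Q(\Psi)>0\,\}$ and $v_Q(R)=v_Q(\Psi)$ for $Q\in\mathcal W$; that $P_1,P_2,P_3$ themselves belong to $\supp(R)$ is immediate from $j(P_i)=n\ge\epsilon$ (Lemma \ref{points}) and the St\"ohr--Voloch bound $v_{P_i}(R)\ge j(P_i)-\epsilon$. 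Thus everything reduces to (a) locating the zeros of the concrete function $\Psi$ off $\{P_1,P_2,P_3\}$, (b) computing their multiplicities, and (c) reading off $j(Q)$ from $v_Q(R)$.

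For (a) I would treat the four cases of Lemma \ref{Hasse} in turn. In each, $\Psi$ factors as a product of monomials in $x,y$ and of powers of $nxy^{n-1}+1$ — whose zeros on $\mathcal H_n$ lie in $\{P_1,P_2,P_3\}$ by Lemma \ref{points}(iii) and by the divisor \eqref{div-dx-aux} of $\mathcal C\cdot\mathcal H_n$ — times one essential factor: $y^{n(p^r+1)-p^r}-x^{n-p^r+1}$ when $p\mid n$; $y^n+x^{n-1}$ when $p=2$, $n\equiv1\bmod 4$ (which, via $Y^n\equiv X^{n-1}Z$ and the characteristic-$2$ collapse of $\mathcal H_n$, vanishes only at $P_1,P_3$); $y^{3n-2}+x^{n-3}$ when $p=2$, $n\equiv3\bmod 4$; and $\Phi:=f_xf_yf_{xy}-\tfrac12(f_x^2f_{yy}+f_y^2f_{xx})$ when $p>2$, $p\nmid n$. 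For the monomial-difference factors one intersects ``essential factor $=0$'' with $xy^n+y+x^n=0$ and eliminates: writing $\lambda=xy^{n-1}$ this yields exactly the parametrisations \eqref{Wnclassico} and \eqref{Wj=3 car2}. For $\Phi$ I would substitute $y=\lambda x^n$ (so $\lambda=y/x^n$) and use the defining relation $x^{n^2-n+1}=-(\lambda+1)\lambda^{-n}$ of $\mathcal H_n$ in these coordinates to reduce $\Phi|_{\mathcal H_n}$ to the shape $\tfrac n2\,x^{n^2-1}\lambda^{n-2}(\lambda+1)^{-1}g(\lambda)$, with $g$ the cubic of Lemma \ref{factorization-g} and the monomial prefactor a unit at every $Q\notin\{P_1,P_2,P_3\}$. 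If $p\mid n-1$, $g$ degenerates modulo $p$ to a constant times $T(T+1)$, whose roots $0,-1$ are taken by $\lambda$ only at $P_1,P_3$, so $\mathcal W=\emptyset$. If $p\nmid n-1$, Lemma \ref{factorization-g} supplies the roots of $g$ — none equal to $0$ or $-1$, since $g(0)=-g(1)=g(-1)=n-1\ne0$ — and tracing each root $\lambda$ back through $y=\lambda x^n$, $x^{n^2-n+1}=-(\lambda+1)\lambda^{-n}$ gives \eqref{Wj=3}, which in the subcase $p\mid n^2-4n+7$ one rewrites using $\alpha^3=1$ as $-(\alpha+1)\alpha^{-n}=\alpha^{2-n}=\alpha^{2(n+1)}$, obtaining \eqref{Wj=5}. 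The case $p=2$, $n\equiv1\bmod 4$, needs no elimination at all: the factorisation \eqref{Lem2} already displays $\Psi$ with every zero in $\{P_1,P_2,P_3\}$.

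For (b), in every case with $\mathcal W\ne\emptyset$ other than $p\mid n^2-4n+7$ each $Q\in\mathcal W$ should be a simple zero of $\Psi$; the efficient argument is a degree count — the pole divisor of the essential factor, read off from Lemma \ref{points}, fixes the total number of its zeros off $\{P_1,P_2,P_3\}$, which equals the cardinality of the finite set in the statement (this count also checks that the $P_i$-contributions of $\Psi$ and of the corrections in Corollary \ref{cor-bridge} assemble into an effective $R$ of the degree predicted by \eqref{degramHn}). When $p\mid n^2-4n+7$, instead $g(\lambda)=(n-1)(\lambda-\alpha)^3$, so $v_Q(\Psi)=3\,v_Q(\lambda-\alpha)$; since $\divi(\lambda)=(n^2-n+1)(P_1-P_2)$ by Lemma \ref{points}(iii), the map $\lambda\colon\mathcal H_n\to\mathbb P^1$ has degree $n^2-n+1$ and is unramified over $\alpha$ — there are $n^2-n+1$ distinct points over $\alpha$, the roots of $x^{n^2-n+1}=-(\alpha+1)\alpha^{-n}\ne0$ — whence $v_Q(\lambda-\alpha)=1$ and $v_Q(R)=3$.

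For (c), I would invoke \cite[Theorem 1.5]{SV}: $v_Q(R)\ge j(Q)-\epsilon$, with equality iff $p\nmid\binom{j(Q)}{\epsilon}$. Combined with $j(Q)>\epsilon$ (forced by $v_Q(R)>0$, as $j(Q)=\epsilon$ would give $v_Q(R)=0$) and $j(Q)\le v_Q(R)+\epsilon$, the value $v_Q(R)=1$ gives immediately $j(Q)=\epsilon+1$, i.e.\ $j(Q)=p^r+1$ in case (1) and $j(Q)=3$ in cases 2(i) and 2(ii)(b), consistently with $p\nmid\binom{\epsilon+1}{\epsilon}$; when $v_Q(R)=3$ and $\epsilon=2$ one gets $j(Q)\in\{3,4,5\}$, but there $p\ge7$ (Lemma \ref{factorization-g}), so $p\nmid\binom32$ and $p\nmid\binom42$, which excludes $j(Q)\in\{3,4\}$ (these would force $v_Q(R)\in\{1,2\}$) and leaves $j(Q)=5$. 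The tracking of which $\epsilon$ and which $p$ arise in each branch — in particular that $p=3$ cannot produce $\mathcal W\ne\emptyset$ in case 2(ii) and that $p\ge7$ in case 2(ii)(a) — is exactly what is recorded after Lemma \ref{factorization-g}. I expect the main obstacle to be step (a)--(b): the explicit eliminations that turn ``zeros of the essential factor on $\mathcal H_n$'' into the clean parametrisations of the statement, together with the attendant effectivity/degree balance at $P_1,P_2,P_3$; these are lengthy but essentially mechanical once the substitutions $\lambda=y/x^n$ (for $p\nmid n$) and $\lambda=xy^{n-1}$ (for $p\mid n$) are fixed.
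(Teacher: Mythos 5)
Your plan is essentially the paper's own proof: reduce via Corollary \ref{cor-bridge} and Lemma \ref{Hasse} to locating the zeros of $(nxy^{n-1}+1)^{\epsilon+1}D_x^{(\epsilon)}y$ off $\{P_1,P_2,P_3\}$, carry out the same case-wise eliminations (the substitutions $\lambda=xy^{n-1}$, resp.\ $\lambda=y/x^n$, which is exactly the paper's reduction to the polynomial $h(x,y)=x^{3n}g(y/x^n)$), and then fix the multiplicities by a counting argument and read off $j(P)$ from the St\"ohr--Voloch equality criterion $p\nmid\binom{j(P)}{\epsilon}$. Your only deviations are cosmetic variants of the same counting step (unramifiedness of $\lambda$ over $\alpha$ in case 2(ii)(a) instead of the global degree count from \eqref{degramHn}, and handling $p\mid n-1$ via the degeneration of $g$ rather than the explicit factorization used in the paper), so I regard this as the same approach, correctly executed at the level of a plan.
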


\begin{proof}

Set $x=X/Z$ and $y=Y/Z$, and let $\mathbb{K}(x,y)$ be the function field of $\mathcal{H}_n$. 
From Corollary \ref{cor-bridge},   the points $P \in \mathcal{W}$ can be obtained by intersecting the affine  curve $f(x,y):=xy^n+x^n+y=0$
with the one associated to $(nxy^{n-1}+1)^\epsilon  D_x^{(\epsilon)}y$   given in Lemma \ref{Hasse}. 

\begin{enumerate}[\rm(1)]
\item Case  $p\mid n$. In this case,  
\eqref{Lem1}  in Lemma \ref{Hasse} yields the curve $y^{n(p^r +1)-p^r} =x^{n-p^r+1}$, and a simple calculation
shows that the intersection points are those of $f(x,y)=0$ subjected to
$$(xy^{n-1})^{p^r+1}+xy^{n-1}+1=0.$$
Thus  for  any root $\lambda$ of the separable polynomial  $T^{p^r+1}+T+1$, we have  $n^2-n+1$ intersection points $P=(x,y)$, where $xy^{n-1}=\lambda$, and $y$ is given by solving $f(\frac{\lambda}{y^{n-1}},y)=0$.
This proves \eqref{Wnclassico}. In addition, since $\#\mathcal{W}=(p^r+1)n^2-n+1$ and $j(P)\geq p^r+1$ for all
 $P \in \mathcal{W}$, equation  \eqref{degramHn} implies  $j(P)= p^r+1$ for all
 $P \in \mathcal{W}$.
 
 \item  Case $p\nmid n$. If $p=2$ and  $n\equiv 1 \mod 4$, then \eqref{Lem2}  in Lemma \ref{Hasse} yields the curve $(y^n+x^{n-1})(xy^{n-1}+1)=0$.
Since  this curve intersects $xy^n+x^n+y=0$ only at points $P=(x,y)$ for which $xy=0$, it follows that   $\mathcal{W}=\emptyset$.
 For $p=2$ and  $n\equiv 3 \mod 4$, the proof is similar to the case   $p\nmid n$.  For $p>2$ and   $n\equiv 1 \mod p$, note that
 $f_xf_yf_{xy}-(f_x^2f_{yy}+f_y^2f_{xx})/2=y^{n-1}(xy^{n-1}+1)(x^{n-1}+y^n)$, and analogous  to the case $p=2$ and  $n\equiv 1 \mod 4$, we have  $\mathcal{W}=\emptyset$.
 
 Next, we assume $p\nmid (n-1)$. Direct computation shows that the problem of intersecting the curves $f_xf_yf_{xy}-(f_x^2f_{yy}+f_y^2f_{xx})/2=0$ and $f(x,y)=0$  can be  reduced to  that of intersecting  $f(x,y)=0$ with the curve associated to 
   \begin{equation}\label{polyh}
   h(x,y)=(n-1)y^3+ (n^3 - 3n^2 + 6n - 2)x^ny^2+ (n^3 - 3n^2 + 3n + 1)x^{2n}y- (n-1)x^{3n}.
   \end{equation}
\begin{enumerate}[\rm(a)]

\item For $p\mid n^2-4n+7$, equation \eqref{pdivide}
gives $h(x,y)=(n-1)(y-\alpha x^n)^3$. From $f(x,\alpha x^n)=\alpha^n x^{n^2+1}+\alpha x^n+x^n$, we arrive at  the $n^2-n+1$  intersection  points $(t, \alpha t^n)$, where $t$ are  roots of 
\begin{equation}
x^{n^2-n+1}=\alpha^{2(n+1)},
\end{equation}
which proves  \eqref{Wj=5}. Note that since  $h(x,y)=(n-1)(y-\alpha x^n)^3$,  the curves
$f(x,y)=0$ and $h(x,y)=0$ intersect  at each $P\in \mathcal{W}$ with multiplicity at least $3$. That is, $v_P(R)\geq 3$ for all $P\in \mathcal{W}$. Since $\#\mathcal{W}=n^2-n+1$, equation  \eqref{degram} implies $v_P(R)=3$ for all $P\in \mathcal{W}$. Therefore, since $p\geq 7$, we have $v_P(R)=j(P)-2$ and then $j(P)=5$.

\item For $p\nmid n^2-4n+7$,  if $\lambda$ is any of the three distinct roots of $g(T)$, then the corresponding factor  $y-\lambda x^n$ of $h(x,y)$  yields  intersection points $(t, \lambda t^n)$, where

\begin{equation}
t^{n^2-n+1}+(\lambda+1)\lambda^{-n}=0,
\end{equation}
which proves  \eqref{Wj=3}.  As in the previous case, a counting argument  gives $j(P)=3$ for all $P\in \mathcal{W}$.

\end{enumerate}

\end{enumerate}

\end{proof}

\begin{cor}\label{OmegaAction}
Consider the  smooth Hurwitz curve $\mathcal{H}_n: XY^n+YZ^n+X^nZ=0$ defined over $\F_{p}$. If  $n>3$ is not a power of $p$, then 
 for any point $P\in \mathcal{H}_n$, we have $ j(P) = n$ if and only if $P \in \{P_1,P_2,P_3\}$. In particular,  $\Aut_{\mathbb{K}}(\mathcal{H}_n)$ acts
on the set $\{P_1,P_2,P_3\}$.
\end{cor}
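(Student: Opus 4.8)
The plan is to combine the values $j(P_1)=j(P_2)=j(P_3)=n$ from Lemma~\ref{points}(ii) with the exhaustive description of $\supp(R)$ and of the contact orders on it provided by Theorem~\ref{TeoWei}. The two hypotheses ``$n>3$'' and ``$n$ is not a power of $p$'' will be used precisely to guarantee that $j(P)\ne n$ at every point $P\notin\Omega=\{P_1,P_2,P_3\}$.

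\emph{The ``if'' direction} is exactly Lemma~\ref{points}(ii). \emph{For the converse}, fix $P\in\mathcal{H}_n\setminus\Omega$; I will show $j(P)\ne n$. By Theorem~\ref{TeoWei} we have $\supp(R)=\Omega\cup\mathcal{W}$, so either $P\notin\supp(R)$ or $P\in\mathcal{W}$. Suppose first $P\notin\supp(R)$, so $v_P(R)=0$; since $v_P(R)\ge j(P)-\epsilon$ by \cite[Theorem~1.5]{SV} (cf.\ the paragraph after~\eqref{degram}), this gives $j(P)\le\epsilon$. Now recall (cf.~Lemma~\ref{Hasse}) that $\epsilon=2$ if $p\nmid n$, while $\epsilon=p^r$ if $p\mid n$, where $n=p^rm$ with $p\nmid m$. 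If $p\nmid n$, then $\epsilon=2<n$ since $n>3$; if $p\mid n$, then ``$n$ not a power of $p$'' forces $m\ge 2$, so $\epsilon=p^r\le n/2<n$. In either case $j(P)\le\epsilon<n$.

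Suppose now $P\in\mathcal{W}$ and read off $j(P)$ from Theorem~\ref{TeoWei}. If $p\mid n$, then $j(P)=p^r+1$ with $n=p^rm$, $m\ge 2$, so $n\ge 2p^r>p^r+1=j(P)$. If $p\nmid n$, then in the cases where $\mathcal{W}\ne\emptyset$ one has $j(P)\in\{3,5\}$: here $j(P)=3\ne n$ because $n>3$, and if $j(P)=5$ then Theorem~\ref{TeoWei} together with Lemma~\ref{factorization-g} forces $p\mid n^2-4n+7$ with $p\ge 7$, whence $n=5$ is impossible (it would give $p\mid 12$), so $j(P)=5\ne n$. Thus $j(P)\ne n$ in every case, which proves $\{P\in\mathcal{H}_n:j(P)=n\}=\Omega$.

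It remains to justify that $\Aut_{\mathbb{K}}(\mathcal{H}_n)$ stabilizes $\Omega$. Since $\deg\mathcal{H}_n=n\ge 4$, every element of $\Aut_{\mathbb{K}}(\mathcal{H}_n)$ is induced by a projective transformation of $\mathbb{P}^2$ (the classical linearity of the automorphism group of a smooth plane curve of degree at least $4$); such transformations carry tangent lines to tangent lines and preserve intersection multiplicities, hence preserve the function $P\mapsto j(P)$. Therefore $\Aut_{\mathbb{K}}(\mathcal{H}_n)$ permutes the level set $\{P:j(P)=n\}$, which by the above equals $\Omega$. I expect the only delicate points in the write-up to be the numerical verifications $\epsilon<n$ in the wild case $p\mid n$ (which is where ``$n$ not a power of $p$'' enters) and the exclusion of $n=5$ when $j(P)=5$; all the substantive work has already been carried out in Theorem~\ref{TeoWei}.
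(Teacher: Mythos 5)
Your proposal is correct and follows essentially the same route as the paper: the ``if'' part is Lemma~\ref{points}(ii), and for the converse one uses that any $P\notin\{P_1,P_2,P_3\}$ either has $j(P)=\epsilon<n$ or lies in $\mathcal{W}$ with $j(P)\in\{p^r+1,3,5\}$ as given by Theorem~\ref{TeoWei}, the final statement following from linearity of automorphisms of a smooth plane curve. You merely spell out the numerical checks ($\epsilon<n$ via $n\neq p^r$, and the exclusion of $n=5$ in the $j(P)=5$ case) that the paper leaves implicit, which is a faithful elaboration rather than a different argument.
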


\begin{proof}

From Lemma \ref{points} (ii),  we have  $ j(P_i) = n$  for $i=1,2,3$. For the remaining points $P\in \mathcal{H}_n$, we have that either  $ j(P) = \epsilon<n$ or 
$P \in \mathcal{W}$, where $\mathcal{W}$  is completely characterized by Theorem \ref{TeoWei}. The last assertion follows from the fact that
$\Aut_{\mathbb{K}}(\mathcal{H}_n) \leqslant \rm{PGL}(3,\mathbb{K})$, as $\mathcal{H}_n \subseteq \mathbb{P}^2$ is smooth (see  e.g. \cite{Chang}).

\end{proof}

\section{The automorphism group  of   $\mathcal{H}_n$}

Let us recall that     $n\geq 3$ is   such that  $p\nmid n^2-n+1$, that  is, the Hurwitz curve  $\mathcal{H}_n$   over $\kk$  is smooth.
Hereafter,    $\xi \in  \kk$ denotes  a primitive  $(n^2-n+1)$-th root of unity.

\begin{lem}\label{aux}   If      $\sigma$ and   $\mu$  are   the projective  transformations   associated to   the matrices
\begin{equation}
\begin{pmatrix}
       \xi^{-n+1}      &  0  &   0 \\
      0      &  \xi  &  0\\
      0       &  0  &   1
\end{pmatrix}
\text{ and }
\begin{pmatrix}
    0       &  1  &   0 \\
      0       &  0  &  1 \\
    1      &  0  &   0
    \end{pmatrix},
\end{equation}
respectively, then the  following  hold. 
\begin{enumerate}[\rm(i)]
\item $\langle  \mu  \rangle$  and  $\langle  \sigma  \rangle$  are subgroups of $\Aut_\kk(\mathcal{H}_n)$ of  order $3$ and $n^2-n+1$, respectively.
\item $\langle  \sigma  \rangle  \cap \langle  \mu  \rangle =\{\bf{1}\}$.
\item $\mu \sigma  \mu^{-1}=\sigma^{n-1}.$
\end{enumerate}
\begin{proof}
The three   assertions follow  from straightforward computations.
\end{proof}
\end{lem}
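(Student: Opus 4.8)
The plan is to verify each of the three assertions by direct matrix computation, making explicit the arithmetic in the ring $\zz[\xi]/(\Phi_{?})$ that makes it work. For (i), I would first check that the map $\mu$ is an automorphism of $\mathcal{H}_n$: substituting $(X:Y:Z)\mapsto (Y:Z:X)$ into $XY^n+YZ^n+X^nZ$ cyclically permutes the three monomials, so $\mathcal{H}_n$ is fixed, and since $\mu^3=\mathrm{id}$ while $\mu\neq\mathrm{id}$, the group $\langle\mu\rangle$ has order $3$. For $\sigma$, which sends $(X:Y:Z)\mapsto(\xi^{-n+1}X:\xi Y:Z)$, I would plug into the defining polynomial and collect the exponent of $\xi$ on each monomial: $XY^n$ picks up $\xi^{-n+1}\cdot\xi^{n}=\xi^{1}$, $YZ^n$ picks up $\xi^{1}$, and $X^nZ$ picks up $\xi^{n(-n+1)}=\xi^{-n^2+n}=\xi^{-(n^2-n)}$. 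Since $\xi^{n^2-n+1}=1$, we have $\xi^{-(n^2-n)}=\xi^{1}$, so all three monomials are scaled by the common factor $\xi$, hence $\sigma$ preserves $\mathcal{H}_n$. The order of $\sigma$ as a projective transformation is the order of $\xi$ modulo the scalars, i.e.\ the smallest $k$ with $\xi^{-(n-1)k}=\xi^{k}$, equivalently $\xi^{nk}=1$; since $\gcd(n,n^2-n+1)=1$ this forces $n^2-n+1\mid k$, so $\sigma$ has order exactly $n^2-n+1$.

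For (ii), any element of $\langle\sigma\rangle\cap\langle\mu\rangle$ is both diagonal and (a power of) the cyclic permutation matrix; the only diagonal element among $\{\mathrm{id},\mu,\mu^2\}$ is $\mathrm{id}$, so the intersection is trivial. For (iii), I would compute $\mu\sigma\mu^{-1}$ directly: conjugating a diagonal matrix $\mathrm{diag}(\xi^{-n+1},\xi,1)$ by the permutation $\mu$ (which in the chosen convention cyclically shifts the diagonal entries) yields $\mathrm{diag}(1,\xi^{-n+1},\xi)$ or $\mathrm{diag}(\xi,1,\xi^{-n+1})$ depending on direction, and I would then check that, up to the overall scalar that is irrelevant in $\mathrm{PGL}(3,\kk)$, this equals $\mathrm{diag}(\xi^{-(n-1)},\xi,1)^{n-1}=\mathrm{diag}(\xi^{-(n-1)^2},\xi^{n-1},1)$. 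The verification reduces to the congruences $-(n-1)^2\equiv ?\pmod{n^2-n+1}$ and similar for the middle entry; using $n^2\equiv n-1$ one simplifies $(n-1)^2=n^2-2n+1\equiv (n-1)-2n+1=-n$, so $\mathrm{diag}(\xi^{-(n-1)^2},\xi^{n-1},1)=\mathrm{diag}(\xi^{n},\xi^{n-1},1)$, and multiplying through by $\xi^{-(n-1)}$ gives $\mathrm{diag}(\xi,1,\xi^{-(n-1)})$, which should match $\mu\sigma\mu^{-1}$ on the nose.

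I do not expect any genuine obstacle here; this is the ``straightforward computations'' the authors refer to. The only point requiring a little care is bookkeeping the direction of the cyclic permutation in (iii)---whether $\mu\sigma\mu^{-1}$ shifts the diagonal entries forward or backward---and making sure the final comparison with $\sigma^{n-1}$ uses the correct power and is interpreted projectively (i.e.\ modulo scalars). Keeping all exponents reduced modulo $n^2-n+1$ via the basic relation $n^2\equiv n-1$ throughout keeps the arithmetic transparent, and since $\xi$ is a \emph{primitive} $(n^2-n+1)$-th root of unity, equality of the diagonal matrices up to scalar is equivalent to the corresponding equalities of exponents modulo $n^2-n+1$, which is what one checks.
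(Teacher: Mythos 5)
Your computations are correct and are exactly the "straightforward computations" the paper invokes: verifying invariance of $XY^n+YZ^n+X^nZ$ under $\mu$ and $\sigma$ (using $\xi^{n^2-n+1}=1$ and $\gcd(n,n^2-n+1)=1$ for the order count), the diagonal-versus-permutation argument for trivial intersection, and the conjugation identity checked projectively via $n^2\equiv n-1\pmod{n^2-n+1}$. No gaps; this matches the paper's (unwritten) proof in substance.
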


The next result presents the   automorphism group of the smooth  Hurwitz curve $\mathcal{H}_n$  defined over $\F_{p}$. The particular cases
$n\in \{3, p^r\}$  are well known, but we provide   them here for the sake of completeness. Also, the case $n = p^r+1$ has been recently settled in \cite{GD}.

\begin{thm}\label{ThmAut}

 If $\Aut_\kk(\mathcal{H}_n)$ denotes the full automorphism group of the smooth  Hurwitz curve
 $$\mathcal{H}_n:\,  XY^n+YZ^n+X^nZ=0$$  defined over $\F_{p}$, then

\begin{equation}
\Aut_\kk(\mathcal{H}_n)  \cong   
\begin{cases}
{\rm PGU}(3, n),  \text{ if } n  \text{  is a power  of  }  p, \\
{\rm PSL}(2, 7),  \text{ if } n=3\neq p,  \\
C_{n^2-n+1} \rtimes_{\varphi} C_3,  \text{  otherwise,  }
\end{cases}
\end{equation}

where $\varphi: C_3=\langle g \rangle  \longrightarrow \mathbb{U}_{n^2-n+1}$ is given by $g \mapsto n-1$.
 
\end{thm}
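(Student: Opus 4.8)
The plan is to split into the three cases and dispatch the first two by citation, then concentrate all the real work on the generic case $n \neq 3, p^r$. For $n$ a power of $p$, the curve $\mathcal{H}_n$ is projectively equivalent to the Hermitian curve, so $\Aut_\kk(\mathcal{H}_n) \cong \PGU(3,n)$ is classical; for $n = 3 \neq p$, the quartic $\mathcal{H}_3$ is the Klein quartic and $\Aut_\kk(\mathcal{H}_3) \cong \PSL(2,7)$ is again classical (with the characteristic-$p$ subtleties irrelevant here since $p \neq 3$ forces $p \geq 5$, above the Hurwitz-bound threshold for genus $3$). So the substance is: when $n > 3$ and $n \neq p^r$, show $\Aut_\kk(\mathcal{H}_n) \cong C_{n^2-n+1} \rtimes_\varphi C_3$.

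First I would establish the lower bound: Lemma \ref{aux} already exhibits $\langle \sigma \rangle \cong C_{n^2-n+1}$, $\langle \mu \rangle \cong C_3$, with trivial intersection and the conjugation relation $\mu \sigma \mu^{-1} = \sigma^{n-1}$, so $G_0 := \langle \sigma, \mu \rangle \cong C_{n^2-n+1} \rtimes_\varphi C_3$ with $\varphi(g) = n-1$ sits inside $\Aut_\kk(\mathcal{H}_n)$. (One should note $n-1$ has multiplicative order $3$ modulo $n^2-n+1$, since $(n-1)^3 + 1 = (n-1+1)((n-1)^2 - (n-1) + 1) = n(n^2-3n+3)$... more precisely $(n-1)^2 \equiv (n-1) - 1 \cdot$ — better: $n^2 \equiv n-1 \pmod{n^2-n+1}$ gives $(n-1)^2 = n^2 - 2n + 1 \equiv -n$, hence $(n-1)^3 \equiv -n(n-1) = -n^2+n \equiv -(n-1)+n = 1$, so the order is exactly $3$ when $n^2-n+1 > 1$.) Then I would argue the reverse inclusion $\Aut_\kk(\mathcal{H}_n) \subseteq G_0$. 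By Corollary \ref{OmegaAction}, $G := \Aut_\kk(\mathcal{H}_n)$ permutes $\Omega = \{P_1, P_2, P_3\}$, giving a homomorphism $G \to \sym(\Omega) \cong S_3$. I would first show the image lands in $A_3$: an element swapping two of the $P_i$ and fixing the third would have to fix a point, and analyzing the local action (using Lemma \ref{points}(iv) for local parameters and the structure of the tangent lines $\ell_i$) plus the fact that $\mathcal{H}_n$ is defined by the specific cyclic equation rules out the transpositions — alternatively, one checks directly that no element of $\PGL(3,\kk)$ inducing a transposition on $\Omega$ preserves $\mathcal{H}_n$ when $n > 3$. Since $\mu$ realizes the $3$-cycle, $G = G_\Omega \cdot \langle \mu \rangle$ where $G_\Omega$ is the pointwise stabilizer of $\Omega$, so it remains to prove $G_\Omega = \langle \sigma \rangle$.

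For this I would use that an automorphism fixing all three $P_i$ is a diagonal element of $\PGL(3,\kk)$, i.e. of the form $(X:Y:Z) \mapsto (aX : bY : cZ)$; substituting into $XY^n + YZ^n + X^nZ = 0$ and comparing coefficients forces $ab^n = bc^n = a^nc$ (up to scaling), and solving this system modulo scalars shows the group of such diagonal automorphisms is cyclic of order exactly $n^2-n+1$, hence equals $\langle \sigma \rangle$. This gives $G = \langle \sigma \rangle \rtimes \langle \mu \rangle \cong C_{n^2-n+1} \rtimes_\varphi C_3$, which is the claim. The main obstacle is the clean exclusion of transpositions in the $S_3$-action — equivalently, showing $G_\Omega$ has index exactly $3$ rather than $6$ in $G$; I expect this is where one must genuinely invoke $n \neq 3$ (for $n=3$ the extra symmetry enlarges $S_3$ to the full $\PSL(2,7)$ action) and $n \neq p^r$ (to exclude the Hermitian-type extra automorphisms, via Theorem \ref{TeoWei} guaranteeing $\mathcal{W}$ does not supply new points with $j(P) = n$). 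A secondary routine-but-careful point is verifying that the solutions of the diagonal coefficient-matching system number exactly $n^2-n+1$ and not some proper multiple, which comes down to $\gcd$ computations with $n^2-n+1$ together with the hypothesis $p \nmid n^2-n+1$.
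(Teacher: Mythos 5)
Your treatment of the main case $n>3$, $n\neq p^r$ is essentially the paper's own argument: invoke Corollary \ref{OmegaAction} to get the representation $\Aut_\kk(\mathcal{H}_n)\to\sym(\Omega)$, identify its kernel with the diagonal automorphisms $(X:Y:Z)\mapsto(\alpha X:\beta Y:Z)$, solve the coefficient-matching system to get $\alpha=\beta^{-n+1}$, $\beta^{n^2-n+1}=1$, i.e.\ the kernel is $\langle\sigma\rangle$, rule out transpositions by checking that an anti-diagonal map cannot preserve the equation, and conclude that the image has order exactly $3$, realized by $\mu$; together with Lemma \ref{aux} this gives $C_{n^2-n+1}\rtimes_\varphi C_3$. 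One small misplacement: the exclusion of transpositions is a direct monomial comparison (the monomials $X^nY, XZ^n, Y^nZ$ are disjoint from $XY^n, YZ^n, X^nZ$ for every $n\ge 2$), so the hypotheses $n\neq 3,p^r$ are not what makes that step work; they are consumed earlier, in Corollary \ref{OmegaAction}, to guarantee that $\Omega$ is exactly the set of points with $j(P)=n$ and hence is $\Aut$-stable -- which you do use, so this is only a misattribution, not a gap.

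The genuine flaw is in your dismissal of the case $n=3\neq p$: you assert that $p\neq 3$ forces $p\ge 5$, forgetting $p=2$ (the excluded characteristics here are only $p=3$, by hypothesis, and $p=7$, by smoothness since $n^2-n+1=7$). Characteristic $2$ is precisely where ``the classical argument'' is not automatic: Roquette's bound (Theorem \ref{Roq}) needs $p>g+1=4$, and a priori wild ramification could enlarge the group beyond $84(g-1)=168$. The paper handles this by exhibiting an explicit $\F_2$-model of the Klein quartic admitting $\PGL(3,\F_2)$, and then capping the order via Henn's classification (Theorem \ref{Henn}), since a group of order $\ge 336>8g^3=216$ would force $\mathcal{H}_3$ to be one of the four listed curves, which it is not. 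The fact itself is in the literature, so a citation could replace this work, but your stated reason for why no characteristic-$p$ subtlety arises is incorrect and, as written, leaves the $p=2$ subcase of the theorem unproved.
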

\begin{proof}

For  $n=p^r$, the  result is well known, as   $\mathcal{H}_n$ is  isomorphic to the  Hermitian curve  (see  e.g.   \cite[ Remark 8.19]{HKT}).
Let us consider the case  $n=3 \neq p$.   For $p=2$, note that  the  determinants

\begin{equation}\label{dets}
D_1=\left|\begin{array}{ccc}
X & X^{2} & X^{8} \\
Y & Y^{2} & Y^{8} \\ 
Z & Z^{2} & Z^{8} 
\end{array} \right| \  \mbox{ and } \ 
D_2=\left|\begin{array}{ccc}
X & X^{2} & X^{4} \\
Y & Y^{2} & Y^{4} \\ 
Z & Z^{2} & Z^{4} 
\end{array} \right|
\end{equation} 
are such that $D_1/D_2$ is a polynomial of degree 4   giving  rise to the  smooth  curve
$$\mathcal{C}:\, (X+Y+Z)^4+(XY+YZ+XZ)^2+XYZ(X+Y+Z)=0.$$
Thus it follows from  elementary properties of determinants   that  the whole of  $\rm{PGL}(3,\mathbb{F}_2)$  is  a subgroup of  $\Aut (\mathcal{C})$. Moreover, 
if  $\zeta$ is a generator of the cyclic group $\F_{8}^{\times}$, then one can check  that
$$(X:Y:Z)\mapsto (X+Y+Z:  \zeta^2 X+\zeta^4 Y+\zeta  Z:  \zeta X+\zeta ^2Y+\zeta^4Z )$$
is an  isomorphism  from  the Klein quartic $\mathcal{H}_3$  to  curve  $\mathcal{C}$. In particular, $\rm{PSL}(3,\mathbb{F}_2)  \hookrightarrow \Aut_{\kk} (\mathcal{H}_3)$, and
then  $\#\Aut_{\kk} (\mathcal{H}_3)=168 m$ for some integer $m\geq 1$.  If $m\geq 2$, then $\#\Aut_{\kk} (\mathcal{H}_3)\geq  336> 8g^3$   contradicts Theorem  \ref{Henn}.
Therefore, $\Aut_{\kk} (\mathcal{H}_3)\cong \rm{PSL}(3,\mathbb{F}_2)$.  For $p>3$, we  have   $p\neq 7$ (as $\mathcal{H}_3$ is nonsingular)  and then   Theorem  \ref{Roq}   implies   $\#\Aut_{\kk} (\mathcal{H}_3)\leq 168$.    Therefore,  the classical    argument for  zero characteristic can  be used, and it follows that 
\begin{equation}
\Aut_{\kk} (\mathcal{H}_3)=\langle \sigma,\mu,T  \rangle \cong \rm{PSL}(3,\mathbb{F}_2),
\end{equation}
where  $\sigma$, $\mu$  are given  by  Lemma \ref{aux},  and $T$ is   the projective  transformation   associated to   the matrix
\begin{equation}
\begin{pmatrix}
    \xi^3-\xi^2        &  \xi-\xi^4  &   1-\xi^5 \\
       \xi-\xi^4       &  1-\xi^5 &   \xi^3-\xi^2   \\
    1-\xi^5       &  \xi^3-\xi^2  &   \xi-\xi^4
\end{pmatrix},
\end{equation}
and   $\xi$  is     a  primitive seventh  root of unity (see e.g.   \cite[ Section  6.5.3]{Dolga}).

Now let us assume $n>3$.  By Corollary \ref{OmegaAction}, $\Aut_\kk(\mathcal{H}_n)$ admits a   permutation representation  $\rho:\, \Aut_\kk(\mathcal{H}_n) \longrightarrow  \sym (\Omega)$. 
Note that  $\ker \rho=\{\vartheta \in \Aut_\kk(\mathcal{H}_n):\,  \vartheta(P_i)=P_i,  \text{ for }  i=1,2,3\}$ is  the set of  maps  $(X:Y:Z) \mapsto (\alpha X: \beta Y:Z)$,  where
 $\alpha, \beta  \in \kk \backslash  \{0\}$ are subject to
\begin{equation}\label{relation}
(\alpha \beta ^n)XY^n+\beta YZ^n+\alpha^nZX^n=\gamma(XY^n+YZ^n+ZX^n)
\end{equation}
for some $ \gamma \in \kk$.   This gives   $\alpha=\beta^{-n+1}$ and $\beta^{n^2-n+1}=1$,  and then $\ker \rho=\langle \sigma \rangle \trianglelefteq \Aut_\kk(\mathcal{H}_n)$, where $\sigma$  is given by Lemma \ref{aux}.  Since  $\mu$ intersects   $\ker \rho$  trivially, it follows  that  $3\leq |\im  \rho |\leq 6$. On the  other hand,  there is no $\varphi \in \Aut_\kk(\mathcal{H}_n)$ such that $\varphi(P_3)=P_3$ and $\varphi(P_1)=P_2$. In fact, one can check that any such a  $\varphi$  should  be  of type  $(X:Y:Z) \mapsto (\alpha Y: \beta X:Z)$,  with   $\alpha, \beta  \in \kk \backslash  \{0\}$ subject to \eqref{relation}, which  contradicts   $\varphi  \notin  \ker \rho$.  Therefore, $|\Aut_\kk(\mathcal{H}_n) / \langle \sigma \rangle|=|\im  \rho |= | \langle  \mu  \rangle|=3$, and the result follows  from  
Lemma \ref{aux}.
\end{proof}



\section{Galois subcovers of $\mathcal{H}_n$}

In several situations, the construction of quotient curves of a given curve is desirable. To this end, one must know the stabilizers of all points of the curve. Moreover, if the order of the stabilizer of a given point is divisible by $p$ (i.e., the stabilizer is nontame), then the ramification groups of such point must be computed. In this section, we describe all subgroups of $G=\aut_{\kk}(\mathcal{H}_n)$ up to conjugacy and all points of $\mathcal{H}_n$ with nontrivial stabilizers, together with their respective stabilizers.  For the nontame cases, the ramification groups are also computed. As a consequence, we obtain the complete spectrum of the genera of quotient curves of the Hurwitz curve.  

In what follows, we establish the following notation:
\begin{itemize}
\item $S_d:=\langle \sigma^{\frac{n^2-n+1}{d}} \rangle$, where $d$ divides $n^2-n+1$.
\item $\tau:=\sigma^{\frac{n^2-n+1}{3}}$ and $T_0:=\langle \tau \rangle$ if $n \equiv 2 \mod 3$.
\item $T_i:=\langle \mu \sigma^i\rangle$, for $i=1,\ldots,n^2-n+1$.
\end{itemize}

We start with the classification of the subgroups of $G$.

\begin{prop}\label{subgroups}
The subgroups $H\leq G$ are the following.
\begin{itemize}
\item[(a)] If $|H|=3$, then
\begin{enumerate}
\item  $H=T_0 \subset Z(G)$, if $n\equiv 2 \mod 3$.
   \item  $H=T_i$,  for  $i=1,\ldots,n^2-n+1$, with such groups forming a single conjugacy class of size $n^2-n+1$ if $n\not\equiv 2 \mod 3$, and three conjugacy classes of size $\frac{n^2-n+1}{3}$, represented by $T_{j\frac{n^2-n+1}{3}}$ with $j=1,2,3$, otherwise. 
\end{enumerate}
\item[(b)] If $|H|=d$, where $d|(n^2-n+1)$, then
\begin{enumerate}
   \item  $H=T_0$ or $T_i$, if $d=3$, with the conjugacy classes described in (a).
   \item   $H=S_d \triangleleft G$, for the other cases.
   \end{enumerate}
\item[(c)] If $|H|=3d$, where $d|(n^2-n+1)$, then
$H=T_i \cdot S_d$,  for $i=0, 1,\cdots,n^2-n+1$. If $n\not\equiv 2 \mod 3$, all such groups are conjugated. If $n\equiv 2 \mod 3$, then 
\begin{enumerate}
\item If $i=0$, we have one conjugacy class with a single group, namely $S_{3d}$ (in this case, $3 \nmid d$).
\item If $i>0$ and $3 \nmid d$, there are three conjugacy classes of size $\frac{n^2-n+1}{3}$, represented by $T_{j\frac{n^2-n+1}{3}} \cdot S_d$, with $j=1,2,3$.  If $i>0$ and $3 | d$, then all groups are conjugated.
\end{enumerate}
\end{itemize} 
\end{prop}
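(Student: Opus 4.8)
The plan is to exploit the explicit structure $G \cong C_{n^2-n+1} \rtimes_\varphi C_3$ from Theorem \ref{ThmAut}, writing $G = \langle \sigma \rangle \rtimes \langle \mu \rangle$ with $\mu\sigma\mu^{-1} = \sigma^{n-1}$ by Lemma \ref{aux}. Set $N = \langle \sigma \rangle$, a cyclic normal subgroup of order $m := n^2-n+1$, with $G/N \cong C_3$. Since $\gcd(3, m)$ may be $1$ or $3$ (note $m \equiv 1 \bmod 3$ when $n \equiv 0,1 \bmod 3$ and $m \equiv 0 \bmod 3$ when $n \equiv 2 \bmod 3$), the two regimes $n \not\equiv 2 \bmod 3$ and $n \equiv 2 \bmod 3$ must be handled separately; this split is already visible in the statement. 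The key arithmetic fact I would isolate first is that $\varphi$ sends the generator of $C_3$ to multiplication by $n-1$ on $\Z/m\Z$, and that $(n-1)$ has multiplicative order exactly $3$ modulo $m$ (indeed $(n-1)^3 \equiv 1 \bmod m$ because $n^3 \equiv -1$, i.e. $(n-1)(n^2-n+1) + \text{stuff}$, more precisely $n^3+1 = (n+1)(n^2-n+1)$ and $(n-1)^3+1 = n^3-3n^2+3n = n(n^2-3n+3)$... I would instead verify $(n-1)^2 \equiv -n \equiv \dots$ directly: from $n^2 \equiv n-1 \bmod m$ one gets $(n-1)^2 = n^2-2n+1 \equiv -n$, and $(n-1)^3 \equiv -n(n-1) = -n^2+n \equiv -(n-1)+n = 1$). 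Thus $\varphi$ is a faithful action of $C_3$ by an order-$3$ automorphism of $C_m$.

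Next I would enumerate subgroups by order, using $|H| \mid 3m$. \textbf{Subgroups of $N$:} every subgroup of the cyclic group $N$ is $S_d = \langle \sigma^{m/d}\rangle$ for a unique $d \mid m$, and each is characteristic in $N$ hence normal in $G$ — this handles all $H \le N$, in particular case (b)(2) and the $i=0$ parts. \textbf{Subgroups not contained in $N$:} such an $H$ surjects onto $G/N \cong C_3$, so $|H| = 3d'$ with $d' \mid m$ and $H \cap N = S_{d'}$; moreover $H$ is generated by $S_{d'}$ together with any element $\mu\sigma^i \in H \setminus N$. Counting: the elements of order $3$ outside $N$ are exactly the $\mu\sigma^i$ and the $\mu^2\sigma^i$ (one checks $(\mu\sigma^i)^3 = \mu^3 \sigma^{i((n-1)^2 + (n-1) + 1)} = \sigma^{i(n^2-n+1)} = 1$ using $(n-1)^2+(n-1)+1 \equiv 0 \bmod m$, which follows from $(n-1)^3 \equiv 1$ and $n-1 \not\equiv 1$). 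This gives the groups $T_i \cdot S_d$ of case (c), with $T_i = \langle \mu\sigma^i\rangle$; I would observe $\langle \mu^2\sigma^j\rangle = \langle \mu\sigma^i \rangle$ for a suitable $i$, so the list is not redundant. When $3 \mid m$ (i.e. $n \equiv 2 \bmod 3$) one must additionally account for $T_0 = \langle\tau\rangle$ with $\tau = \sigma^{m/3}$, which is an order-$3$ subgroup inside $N$, and check $\tau \in Z(G)$: indeed $\mu\tau\mu^{-1} = \tau^{n-1}$ and $(n-1)\cdot \tfrac{m}{3} \equiv \tfrac{m}{3} \bmod m$ iff $(n-2)\tfrac{m}{3} \equiv 0 \bmod m$ iff $3 \mid n-2$, which holds. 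This explains why $T_0 \subset Z(G)$ and why in case (c) with $3 \mid d$ the factor $S_d$ already contains $T_0$, collapsing some conjugacy classes.

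\textbf{Conjugacy classes.} For $H$ normal (the $S_d$, and $T_0$ when present), there is one class each. For the non-normal families $T_i$ and $T_i \cdot S_d$, conjugation by $\sigma$ acts on the index $i$: since $\sigma (\mu\sigma^i) \sigma^{-1} = \mu (\sigma^{-1})^{n-1}\sigma^{i}\sigma = \mu\sigma^{i + 1 - (n-1)} = \mu\sigma^{i - (n-2)}$, the orbit of $i$ under $\langle\sigma\rangle$-conjugation is the coset $i + (n-2)\Z/m\Z$; because $\gcd(n-2, m)$ divides $3$ (one computes $\gcd(n-2, n^2-n+1)$ by Euclid: $n^2-n+1 \equiv (n-2) + \dots$, ultimately a divisor of $3$), this coset is everything when $3 \nmid m$ and has index $3$ when $3 \mid m$ — giving the single class of size $m$ in the first case and three classes of size $m/3$ represented by $T_{j m/3}$ in the second. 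Conjugation by $\mu$ permutes these three classes cyclically (or fixes the single class), so it does not create new classes. The same orbit computation, applied modulo $d$ after intersecting with $S_d$, yields the conjugacy statement for the $T_i \cdot S_d$ in part (c); when $3 \mid d$ the subgroup $S_d \supseteq T_0$ absorbs the obstruction and all $T_i\cdot S_d$ merge into one class.

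The main obstacle I anticipate is \emph{bookkeeping the $n \equiv 2 \bmod 3$ case cleanly}: one must track simultaneously whether $3 \mid d$, the distinction between $T_0 \subset N$ and the $T_i \not\subset N$, and the resulting collapse/split of conjugacy classes — all of which hinge on the single gcd computation $\gcd(n-2, n^2-n+1) \mid 3$ and the order-$3$ behaviour of $n-1$ mod $m$. Once those two elementary number-theoretic facts are nailed down, the rest is a systematic orbit count and poses no conceptual difficulty.
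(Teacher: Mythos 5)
Your overall route---exploiting $G=\langle\sigma\rangle\rtimes\langle\mu\rangle$ with $\mu\sigma\mu^{-1}=\sigma^{n-1}$, splitting subgroups according to whether they lie in $N=\langle\sigma\rangle$, and computing conjugation orbits on the index $i$ of $\mu\sigma^{i}$---is essentially the paper's. One small slip first: from $\mu\sigma\mu^{-1}=\sigma^{n-1}$ one gets $\mu^{-1}\sigma\mu=\sigma^{(n-1)^2}=\sigma^{-n}$, hence $\sigma^{l}\mu\sigma^{i}\sigma^{-l}=\mu\sigma^{\,i-l(n+1)}$, not $\mu\sigma^{\,i-l(n-2)}$ as you wrote. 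This happens to be harmless, because $n+1\equiv n-2\pmod 3$ and $\gcd(n+1,n^2-n+1)=\gcd(n+1,3)$, $\gcd(n-2,n^2-n+1)=\gcd(n-2,3)$, so the two shifts generate the same subgroup of $\mathbb{Z}/(n^2-n+1)\mathbb{Z}$ and your orbit count survives; still it should be corrected (the paper uses $\sigma^{l}\mu\sigma^{-l}=\mu\sigma^{-l(n+1)}$).

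The genuine gap is in the conjugacy-class step for $n\equiv 2\pmod 3$. You justify that the three $\langle\sigma\rangle$-conjugation orbits of the $T_i$ (and of the $T_i\cdot S_d$ with $3\nmid d$) give three distinct $G$-classes by saying that conjugation by $\mu$ ``permutes these three classes cyclically \ldots\ so it does not create new classes.'' That reasoning is backwards: if $\mu$-conjugation permuted the three $\sigma$-orbits cyclically, they would fuse into a \emph{single} $G$-conjugacy class of size $n^2-n+1$, contradicting precisely the statement you are proving. What must be checked is that $\mu$-conjugation stabilizes each $\sigma$-orbit: one has $\mu T_i\mu^{-1}=T_{i(n-1)}$, the $\sigma$-orbit of $T_i$ is determined by $i\bmod 3$ (since the shift $n+1$ has $\gcd(n+1,n^2-n+1)=3$ in this case), and $n-1\equiv 1\pmod 3$ when $n\equiv 2\pmod 3$, so $i(n-1)\equiv i\pmod 3$ and no fusion occurs. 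The paper instead rules out fusion by counting: there are exactly $2(n^2-n+1)$ elements of order $3$ outside $\langle\sigma\rangle$, distributed among the explicitly exhibited $\sigma$-conjugates of $\langle\mu\rangle$, $\langle\tau\mu\rangle$, $\langle\tau^2\mu\rangle$. Either fix is short, but as written the step delivering ``three conjugacy classes'' is unsupported (indeed contradicted) by your stated reason, and the same repair is needed in part (c) when $3\nmid d$. The rest---completeness of the list via every element outside $N$ having order $3$, normality of the $S_d$, centrality of $\tau$, and the collapse to a single class when $3\mid d$ because then $3\nmid (n^2-n+1)/d$---is sound.
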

\begin{proof}The list of subgroups of $G$ follows by straightforward computations, using the fact that $G=\langle\sigma,\mu \rangle$ is such that $\sigma^{n^2-n+1}=\mu^3=1$ and $\mu \sigma \mu^{-1}=\sigma^{n-1}$.
\begin{itemize}
\item[(a)] Assume $|H|=3$ and suppose $n \not\equiv 2 \mod 3$. Then  $n^2-n+1 \not\equiv 0 \mod 3$. Thus, by the Sylow Theorem, the Sylow $3$-subgroups of $G$ have order $3$ and are all conjugated. Hence, such subgroups are  generated by $\sigma^k \mu \sigma^{-k}$ for $k \in\{0,\ldots,n^2-n\}$, as $G=\langle \sigma\rangle \rtimes  \langle \mu \rangle$. We have $\sigma^k \mu \sigma^{-k} \neq \sigma^l \mu \sigma^{-l}$ for all $k,l \in \{0,\ldots,n^2-n\}$ such that $k \neq l$. Indeed, if $\sigma^k \mu \sigma^{-k}=\sigma^l \mu \sigma^{-l}$ for $k,l \in \{0,\ldots,n^2-n\}$ with $k>l$, we obtain $\sigma^d \mu=\mu \sigma^{d}$, where $d=k-l$. This gives
$$
\left(
\begin{array}{ccc}
0&\xi^{d(-n+1)}&0\\
0&0&\xi^d\\
1&0&0
\end{array}\right)=
\left(
\begin{array}{ccc}
0&\xi^{dn}&0\\
0&0&\xi^{d(n-1)}\\
1&0&0
\end{array}\right).
$$
Thus $n^2-n+1 | d(2n-1)$ and $n^2-n+1 | d(n-2)$. Since $d<n^2-n+1$, we conclude that $n^2-n+1$, $2n-1$ and $n-2$ have a common factor $\ell$. Then $\ell | 2n-1$ and $\ell | 2n-4$ gives $\ell=3$, which contradicts  $n \not\equiv 2 \mod 3$. A straightforward computation also shows that $\sigma^d \mu \neq\mu^2 \sigma^{d}$ for all $d \in \{0,\ldots,n^2-n\}$ (this does not depend on the congruence of $n$ modulo $3$). Therefore $\langle \sigma^i \mu \sigma^{-i} \rangle \neq \langle \sigma^j \mu \sigma^{-j} \rangle$ if $i \neq j$.

Assume now $n \equiv 2 \mod 3$. We will show that, up to conjugacy, $H \in\{T_0,\langle \tau^j\mu \rangle \ | \ j=0,1,2\}$, with $T_0 \leq Z(G)$, and the conjugacy class of $\langle \tau^j\mu \rangle$ has size $\frac{n^2-n+1}{3}$ for $j=0,1,2$. Recall that $n \equiv 2 \mod 3$ is equivalent to $n^2-n+1 \equiv 0 \mod 3$. Since $\frac{n^2-n+1}{3}$ is not divisible by $3$, we conclude that the Sylow $3$-subgroups of $G$ have order $9$. The element $\alpha:=\xi^{\frac{n^2-n+1}{3}}$ is a primitive cubic root of $1$, and then $\tau:(X:Y:Z) \mapsto (\alpha^2 X: \alpha Y: Z)$. One can check that $\tau$ commutes with $\mu$, the group $K=\langle \tau, \mu \rangle$ has order $9$ (in particular, it is a Sylow $3$-subgroup of $G$) and every element of $K$ has order $3$. Hence, every element of order $3$ of $G$ is conjugated to some element of $K$. Therefore, since $G=\langle \sigma\rangle \rtimes  \langle \mu \rangle$, an element of order $3$ of $G$ is of the form $\rho \theta \rho^{-1}$, where $\theta \in K$ and  $\rho=\sigma^k\mu^s \in G$,  with $k \in \{0,\ldots,n^2-n\}$ and $s \in \{0,1,2\}$. Since $\mu^s \in K$, we obtain $\rho \theta \rho^{-1}=\sigma^{k} \tilde{\theta} \sigma^{-k}$, with 
$
\tilde{\theta} \in K 
$.

We clearly have $\sigma^{k}\mu \sigma^{-k}=\mu$ for $k=\frac{n^2-n+1}{3}$. Assume that this equality holds for some $0<k<\frac{n^2-n+1}{3}$. Then, a computation as in case $n \not\equiv 2 \mod 3$ shows that $n^2-n+1$ divides both $k(n-2)$ and $k(2n-1)$, and $\gcd(k(n-2),k(2n-1))=3k$. Thus $n^2-n+1$ divides $3k$, a contradiction. Moreover, as we saw previously, $\sigma^d \mu \neq\mu^2 \sigma^{d}$ for all $d \in \{0,\ldots,n^2-n\}$.  Hence, the subgroups  $\langle \sigma^{k} \mu \sigma^{-k} \rangle$, where $k \in\{1,\ldots,\frac{n^2-n+1}{3}\}$, are pairwise distinct. Since $\tau$ is in the center of $G$, $\langle \tau \rangle$ is the only group in its conjugacy class. The fact that $\tau$ is central in $G$ also gives $\sigma^{k} \tau \mu \sigma^{-k}= \tau\mu$ for $k=\frac{n^2-n+1}{3}$, $\sigma^{k}\tau \mu \sigma^{-k}\neq \tau\mu$ for $k \in\{1,\ldots,\frac{n^2-n+1}{3}-1\}$  and $\sigma^{d}\tau \mu \sigma^{-d}\neq \tau^2\mu^2$ for $d \in\{0,\ldots,n^2-n\}$. Thus $\langle \sigma^{k} \tau \mu \sigma^{-k} \rangle$, where $k \in\{1,\ldots,\frac{n^2-n+1}{3}\}$, are pairwise distinct. An analogous argument shows that the same holds for $\langle\sigma^{k} \tau^2 \mu \sigma^{-k} \rangle$. Finally, since there are precisely $2(n^2-n+1)$ elements of order $3$ outside $\langle \sigma \rangle$ in $G$, the subgroups  $T_{j\frac{n^2-n+1}{3}}$  and  $T_{k\frac{n^2-n+1}{3}}$ are not conjugated if $j \neq k$, where $j,k \in\{1,2,3\}$. 
\item[(b)] Suppose $|H|=d$, where $d|(n^2-n+1)$. In view of (a), let us assume $d \neq 3$. Using item (a) and a counting argument, we conclude that $H \leq \langle \sigma \rangle$, and the result follows.
\item[(c)] Suppose $|H|=3d$, where $d|(n^2-n+1)$, i.e., $H=T_i \cdot S_d$. If $n \not\equiv 2 \mod 3$, then (a) and (b) imply that $H$ is conjugated to $T_{n^2-n+1}\cdot S_d$. 

Assume  $n \equiv 2 \mod 3$. If $i=0$, then $H=S_{3d}$, and the result follows from (b). Assume $i>0$ and $3 \nmid d$. In particular, $\tau \notin H$. By (a) and the equality $\sigma^{l}\mu\sigma^{-l}=\mu\sigma^{-l(n+1)}$ for all integer $l$, there exists  $j\in \{1,\ldots,\frac{n^2-n+1}{d}\}$ such that 
$$
		\mu\sigma^{i}\in\{\sigma^j \mu \sigma^{-j}, \sigma^j \mu\tau \sigma^{-j}, \sigma^j \mu\tau^{2} \sigma^{-j}\}.
$$
Then, given $(\mu\sigma^{i})^s\sigma^{k\frac{n^2-n+1}{d}} \in H$, where $s\in\{1,2\}$ and $k \in \{1,\ldots,\frac{n^2-n+1}{d}\}$, we have  
$$
(\mu\sigma^{i})^s\sigma^{k\frac{n^2-n+1}{d}} \in \{\sigma^j (\mu^s\sigma^{k\frac{n^2-n+1}{d}}) \sigma^{-j}, \sigma^j (\mu^s\tau^s\sigma^{k\frac{n^2-n+1}{d}}) \sigma^{-j}, \sigma^j (\mu^s\tau^{2s}\sigma^{k\frac{n^2-n+1}{d}}) \sigma^{-j}\}.
$$
Therefore, $H$ is conjugated to one of the following: $\langle \mu \rangle \cdot S_d$, $\langle \mu \tau\rangle \cdot S_d$ or $\langle \mu\tau^2 \rangle \cdot S_d$. Since these subgroups are not conjugated to each other, we have the conclusion. If $3|d$, then $ \tau \in H$. Hence $\sigma^j \mu \sigma^{-j} \in H$ for some $j$, which implies that $H$ is conjugated to $\langle \mu \rangle \cdot  S_d$. 
\end{itemize}
\end{proof}

Now that we have the classification of all subgroups of $G$ up to conjugacy, we want to explore which ones of them fix points of $\mathcal{H}_n$. Recall that $\Omega=\{P_1,P_2,P_3\} \subset \mathcal{H}_n$ is the fundamental triangle.

\begin{lem}\label{pfc}
The automorphism $\sigma$ fixes $\Omega$ pointwise and the remaining points of $\mathcal{H}_n$ are in long orbits of $\sigma$. Furthermore, no automorphism of $\mathcal{H}_n$ outside $\langle \sigma \rangle$ fixes a point of $\Omega$.
\end{lem}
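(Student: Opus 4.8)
The plan is to read everything off the explicit matrices for $\sigma$ and $\mu$ (from Lemma \ref{aux}) together with the structure theorem for $\Aut_\kk(\mathcal{H}_n)$. For the first claim, note that $\sigma$ is represented by the diagonal matrix $\operatorname{diag}(\xi^{-n+1},\xi,1)$, and a diagonal projective transformation fixes each of the coordinate points $P_1=(1:0:0)$, $P_2=(0:1:0)$, $P_3=(0:0:1)$; hence $\sigma$ fixes $\Omega$ pointwise.

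Next I would show that every other point lies in a long orbit of $\langle\sigma\rangle$. Let $P=(a:b:c)\in\mathcal{H}_n$ with $P\notin\Omega$; by Lemma \ref{points}(i) this is equivalent to $abc\neq 0$. Suppose $\sigma^k(P)=P$ for some $k$ with $0\le k<n^2-n+1$. Since $\sigma^k$ has matrix $\operatorname{diag}(\xi^{-k(n-1)},\xi^k,1)$, the equality $\sigma^k(P)=P$ of projective points forces $\xi^k b=\lambda b$ and $c=\lambda c$ for a common scalar $\lambda$; as $b,c\neq 0$, this gives $\lambda=1$ and $\xi^k=1$, so $n^2-n+1\mid k$ and therefore $k=0$. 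Thus the stabiliser of $P$ in $\langle\sigma\rangle$ is trivial, and since $|\langle\sigma\rangle|=n^2-n+1$ by Lemma \ref{aux}(i), the orbit $\langle\sigma\rangle(P)$ has full length $n^2-n+1$.

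For the last assertion I would use the structure of $G=\Aut_\kk(\mathcal{H}_n)$ given by Theorem \ref{ThmAut}, namely $G=\langle\sigma\rangle\rtimes\langle\mu\rangle$ with $|\langle\mu\rangle|=3$; hence every $\vartheta\in G$ outside $\langle\sigma\rangle$ can be written $\vartheta=\sigma^k\mu^s$ with $s\in\{1,2\}$. The matrix of $\mu$ shows $\mu\colon(X:Y:Z)\mapsto(Y:Z:X)$, so $\mu$ induces the $3$-cycle $P_1\mapsto P_3\mapsto P_2\mapsto P_1$ on $\Omega$. Since $\sigma^k$ fixes $\Omega$ pointwise by the first part, for $P\in\Omega$ we get $\vartheta(P)=\sigma^k(\mu^s(P))=\mu^s(P)$, so $\vartheta$ acts on $\Omega$ exactly as $\mu^s$, a nontrivial power of a $3$-cycle, which has no fixed point in $\Omega$. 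Therefore no automorphism outside $\langle\sigma\rangle$ fixes a point of $\Omega$.

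As for the difficulty: once Theorem \ref{ThmAut} is in hand the whole argument is a short coordinate computation with no real obstacle; the only point that needs a little care is ruling out \emph{every} nonzero power $\sigma^k$ in the long-orbit step — this uses both $b\neq 0$ (via Lemma \ref{points}(i)) and the primitivity of $\xi$ — rather than merely checking $\sigma$ itself.
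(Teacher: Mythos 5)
Your proof is correct and follows essentially the same route as the paper: both arguments are short matrix computations that use the decomposition $G=\langle\sigma\rangle\rtimes\langle\mu\rangle$ to write any automorphism outside $\langle\sigma\rangle$ as $\sigma^k\mu^s$ (the paper writes $\mu^s\sigma^k$ and exhibits its matrix) and observe that it acts on $\Omega$ as a nontrivial $3$-cycle, while the diagonal form of $\sigma$ together with $abc\neq 0$ for $P\notin\Omega$ gives the pointwise fixing of $\Omega$ and the long orbits. Your explicit stabiliser computation for the long-orbit step is just a spelled-out version of what the paper calls straightforward.
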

\begin{proof}
The proof of the first claim is straightforward. For the second claim, note that if $\pi \in \aut(\mathcal{H}_n)\backslash \langle \sigma \rangle$, then $\pi=\mu^s\sigma^k$ for some $s\in\{1,2\}$ and $k \in \{0,1,\ldots,n^2-n\}$. Since
$$
\mu\sigma^k=\left(
\begin{array}{ccc}
0&\xi^{k}&0\\
0&0&1\\
\xi^{kn^2}&0&0
\end{array}\right)
 \ \ \ \ \text{ and } \ \ \ \mu^2\sigma^k=\left(
\begin{array}{ccc}
0&0&1\\
\xi^{kn^2}&0&0\\
0&\xi^k&0
\end{array}\right),
$$
the conclusion follows directly.
\end{proof}

\begin{lem}\label{ost}
Let $H$ be a nontrivial subgroup of $G$ that fixes some point $P \in \mathcal{H}_n$. Then either $H \leq\langle \sigma \rangle$ or $|H|=3$
\end{lem}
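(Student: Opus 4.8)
The plan is to argue by a case split on whether the point $P$ fixed by $H$ lies in the fundamental triangle $\Omega=\{P_1,P_2,P_3\}$, and in each case to combine Lemma~\ref{pfc} with the structure $G=\langle\sigma\rangle\rtimes\langle\mu\rangle$, in which $\langle\sigma\rangle$ is normal of index $3$.

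First I would deal with the case $P\in\Omega$. Here every element of $H$ fixes the point $P$ of $\Omega$, so the second assertion of Lemma~\ref{pfc}---that no automorphism of $\mathcal{H}_n$ outside $\langle\sigma\rangle$ fixes a point of $\Omega$---forces $H\subseteq\langle\sigma\rangle$, which is the first alternative of the statement.

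Next I would treat the case $P\notin\Omega$. By the first assertion of Lemma~\ref{pfc}, such a $P$ lies in a long (regular) orbit of $\sigma$, so the stabilizer of $P$ inside $\langle\sigma\rangle$ is trivial; since $H\cap\langle\sigma\rangle$ fixes $P$, this yields $H\cap\langle\sigma\rangle=\{1\}$. Consequently the natural map $H\hookrightarrow G/\langle\sigma\rangle\cong C_3$ is injective, so $|H|$ divides $3$, and as $H$ is nontrivial we get $|H|=3$, which is the second alternative.

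There is essentially no obstacle: the argument rests entirely on Lemma~\ref{pfc}, which pins down exactly which points carry a nontrivial $\langle\sigma\rangle$-stabilizer and excludes fixed points in $\Omega$ for automorphisms outside $\langle\sigma\rangle$, together with the elementary observation that a subgroup meeting an index-$3$ normal subgroup trivially has order dividing $3$. The only thing to keep straight is to perform the case distinction on the location of $P$ before invoking these facts.
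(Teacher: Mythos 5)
Your proof is correct, and it rests on the same key ingredient as the paper's, namely Lemma~\ref{pfc}; the difference lies only in the final counting step. The paper argues contrapositively: assuming $H\not\leq\langle\sigma\rangle$, it deduces $P\notin\Omega$ from Lemma~\ref{pfc}, then applies the Orbit--Stabilizer Theorem, $|G_P|\,|G(P)|=3(n^2-n+1)$, together with $|G(P)|\geq n^2-n+1$ (again from the long $\sigma$-orbits) to get $|G_P|\leq 3$, and finally uses the oddness of $n^2-n+1$ to rule out $|G_P|=2$. You instead observe that for $P\notin\Omega$ the long-orbit statement gives $H\cap\langle\sigma\rangle=\{1\}$, so $H$ embeds into $G/\langle\sigma\rangle\cong C_3$ via the quotient map, whence $|H|$ divides $3$; this bypasses the parity argument entirely and uses the normality of $\langle\sigma\rangle$ (established in Lemma~\ref{aux} and Theorem~\ref{ThmAut}) instead of orbit counting. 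Both routes are elementary and of comparable length; yours is marginally cleaner in that the divisibility by $3$ comes for free from the index-$3$ quotient, while the paper's version gives the slightly stronger byproduct that the full stabilizer $G_P$ itself has order $3$ when it is not contained in $\langle\sigma\rangle$.
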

\begin{proof}
Suppose that $H \not\leq \langle \sigma \rangle$. Then $P \notin \Omega$ by Lemma \ref{pfc}. The Orbit-Stabilizer Theorem gives $|G_P||G(P)|=3(n^2-n+1)$. Again by Lemma \ref{pfc}, we have $|G(P)| \geq n^2-n+1$. Since $H \subset G_P$ and $n^2-n+1$ is odd, we obtain the result.
\end{proof}

\begin{prop}\label{pf3}
Assume $n \not\equiv 2 \mod 3$. The fixed points of $\langle \mu \rangle$ is $(1:1:1)$  if $p=3$ or are 
$
( \alpha:\alpha^2:1)$  and $( \alpha^2:\alpha:1)
$ 
if $p \neq 3$, where $\alpha$ is a primitive cubic root of $1$. Furthermore, besides the subgroups of $\langle \sigma \rangle$ and the conjugated of $\langle \mu \rangle$, no other subgroup of $G$ fixes a point of $\mathcal{H}_n$.
\end{prop}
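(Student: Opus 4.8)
The plan is to split the statement into two independent parts: first the explicit determination of the fixed locus of $\langle\mu\rangle$ on $\mathcal{H}_n$, and then the ``furthermore'' claim about which subgroups of $G$ fix a point.

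For the fixed points, I would argue purely projectively. A point $P=(a:b:c)\in\mathbb{P}^2$ is fixed by $\mu:(X:Y:Z)\mapsto(Y:Z:X)$ exactly when $(b:c:a)=(a:b:c)$, i.e.\ when there is $\lambda\in\mathbb{K}^\times$ with $b=\lambda a$, $c=\lambda b$, $a=\lambda c$, which forces $\lambda^3=1$. If $p=3$, then $\lambda^3-1=(\lambda-1)^3$ has the unique root $\lambda=1$, giving only $P=(1:1:1)$, which lies on $\mathcal{H}_n$ since $1+1+1=0$ in characteristic $3$. If $p\neq 3$, then $\lambda\in\{1,\alpha,\alpha^2\}$ for a primitive cube root of unity $\alpha$, producing the three distinct points $(1:1:1)$, $(1:\alpha:\alpha^2)$ and $(1:\alpha^2:\alpha)$; here $(1:1:1)\notin\mathcal{H}_n$ because $3\neq 0$, whereas substituting $(1:\alpha:\alpha^2)$ into $XY^n+YZ^n+X^nZ$ yields $\alpha^n+\alpha^{2n+1}+\alpha^2$, and a short case check on $n\bmod 3$ (using the hypothesis $n\not\equiv 2$, so $n\equiv 0$ or $1$) reduces this to $1+\alpha+\alpha^2=0$; the same holds for $(1:\alpha^2:\alpha)$. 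Rescaling by $\alpha$ and $\alpha^2$ identifies these with $(\alpha:\alpha^2:1)$ and $(\alpha^2:\alpha:1)$, matching the statement.

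For the ``furthermore'' part, let $H\leq G$ be nontrivial and fix some $P\in\mathcal{H}_n$. By Lemma \ref{ost}, either $H\leq\langle\sigma\rangle$ or $|H|=3$. Subgroups of $\langle\sigma\rangle$ are already on the allowed list (and indeed each fixes every point of $\Omega$ by Lemma \ref{pfc}), so we may assume $|H|=3$ and $H\not\leq\langle\sigma\rangle$ — consistent since $n\not\equiv 2\bmod 3$ gives $3\nmid n^2-n+1=|\langle\sigma\rangle|$. By Proposition \ref{subgroups}(a), under this congruence all order-$3$ subgroups not contained in $\langle\sigma\rangle$ form a single conjugacy class, represented by $T_{n^2-n+1}=\langle\mu\rangle$; hence $H$ is conjugate to $\langle\mu\rangle$. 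Conversely, every conjugate $\rho\langle\mu\rangle\rho^{-1}$ fixes $\rho(P')$ for each fixed point $P'$ of $\langle\mu\rangle$, and the first part exhibits such a $P'$, so the conjugates of $\langle\mu\rangle$ genuinely belong on the list. This also disposes of the larger subgroups in Proposition \ref{subgroups}(c): any $H$ with $|H|=3d$, $d>1$, has $H\not\leq\langle\sigma\rangle$ and $|H|>3$, so by Lemma \ref{ost} it fixes no point.

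I do not anticipate a genuine obstacle: the second part is a bookkeeping synthesis of Lemmas \ref{pfc} and \ref{ost} with Proposition \ref{subgroups}. The only step demanding care is the characteristic-$3$ bifurcation — observing that $p=3$ is compatible with the hypothesis precisely because $n\not\equiv 2\bmod 3$, that $\lambda^3=1$ then has only $\lambda=1$, and that $(1:1:1)$ switches from lying off $\mathcal{H}_n$ (when $p\neq 3$) to lying on it (when $p=3$) — together with the routine $n\bmod 3$ verification that $(1:\alpha:\alpha^2)$ lies on the curve.
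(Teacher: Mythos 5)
Your proposal is correct and follows essentially the same route as the paper: an explicit determination of the fixed points of $\mu$ on $\mathcal{H}_n$ (your verification that $(1:\alpha:\alpha^2)$ and $(1:\alpha^2:\alpha)$ lie on the curve precisely because $n\not\equiv 2 \pmod 3$, and the $p=3$ versus $p\neq 3$ split, is exactly the ``straightforward computation'' the paper leaves implicit), followed by deducing the ``furthermore'' claim from Proposition \ref{subgroups}(a) together with Lemma \ref{ost}. The only difference is cosmetic: you compute the fixed locus of $\mu$ in $\mathbb{P}^2$ first and then intersect with $\mathcal{H}_n$, while the paper imposes the fixed-point conditions directly on a point of the curve.
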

\begin{proof}
Let $P=(a:b:c) \in \mathcal{H}_n$ fixed by $\mu$. Then $abc \neq 0$,  $(a/c)^3=1$ and $b/c=(a/c)^2$ (note that $(1:1:1) \in \mathcal{H}_n$ if and only if $p=3$).  The last statement follows from Proposition \ref{subgroups} item (a) and Lemma \ref{ost}.
\end{proof}

\begin{prop}\label{pf3-2}
Assume $n \equiv 2 \mod 3$. Then the following holds:
\begin{itemize}
\item[(1)] $\langle  \mu  \rangle$ has no fixed points.
\item [(2)]$\langle \tau \rangle$ fixes $\Omega$ pointwise.
\item[(3)] $\langle \tau \mu \rangle$ fixes pointwise the set
$
\{( \alpha:1:1), (1 :\alpha:1),(1 :1:\alpha) \},
$
where $\alpha$ is a primitive cubic root of $1$.
\item [(4)] $\langle \tau^2 \mu  \rangle$ fixes pointwise the set
$
\{( \alpha^2:1:1), (1 :\alpha^2:1),(1 :1:\alpha^2) \},
$
where $\alpha$ is a primitive cubic root of $1$. 
\end{itemize}
Moreover, besides the subgroups of $\langle \sigma \rangle$ and the conjugated of the groups described in (2), (3) and (4) above, no other subgroup of $G$ fixes a point of $\mathcal{H}_n$.
\end{prop}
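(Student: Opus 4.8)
The plan is to mirror the structure of Proposition~\ref{pf3}, exploiting the fact that $n\equiv 2\bmod 3$ forces $3\mid n^2-n+1$, so that $\tau=\sigma^{(n^2-n+1)/3}$ is a genuine central element of $G$ of order $3$, and the Sylow $3$-subgroup $K=\langle\tau,\mu\rangle$ is elementary abelian of order $9$. I would start with item (2): since $\tau$ is a nontrivial power of $\sigma$, Lemma~\ref{pfc} immediately gives that $\langle\tau\rangle$ fixes $\Omega$ pointwise and nothing else. For item (1), I would argue as in Lemma~\ref{ost}: a point fixed by $\langle\mu\rangle$ lies outside $\Omega$ by Lemma~\ref{pfc}, and then the Orbit-Stabilizer relation $|G_P||G(P)|=3(n^2-n+1)$ together with $|G(P)|\ge n^2-n+1$ (Lemma~\ref{pfc}) and $3\mid n^2-n+1$ forces $|G_P|$ to be a divisor of $3$ coprime considerations — more directly, $\langle\mu\rangle\le G_P$ would give $3\mid|G_P|$ and $|G(P)|\mid n^2-n+1$, so $|G(P)|=(n^2-n+1)/k$ with $3k\mid 3(n^2-n+1)$; the point is that the only order-$3$ subgroups that can occur are $\langle\tau\rangle$, $\langle\tau\mu\rangle$, $\langle\tau^2\mu\rangle$ by the conjugacy classification of Proposition~\ref{subgroups}(a), and $\langle\mu\rangle$ itself is conjugate to $\langle\tau\mu\rangle$ or $\langle\tau^2\mu\rangle$? — here one must be careful. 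In fact the cleanest route for (1) is direct computation: a point $P=(a:b:c)$ fixed by $\mu$ must have $a:b:c=b:c:a$, hence $a=b=c$, but $(1:1:1)\in\mathcal{H}_n$ iff $3\mid n^2-n+1$... no: $(1:1:1)$ lies on $\mathcal{H}_n$ iff $1+1+1=0$, i.e. $p=3$; and $p=3$ with $n\equiv 2\bmod 3$ contradicts $p\nmid n^2-n+1$ since $n\equiv 2\bmod 3 \Rightarrow 3\mid n^2-n+1$. So $p\neq 3$, whence $(1:1:1)\notin\mathcal{H}_n$ and $\langle\mu\rangle$ has no fixed point.

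For items (3) and (4), I would solve directly the fixed-point equations for $\tau\mu$ and $\tau^2\mu$. Writing $\tau\mu$ as a matrix (the product of $\mathrm{diag}(\alpha^2,\alpha,1)$ with the cyclic permutation matrix of $\mu$, where $\alpha=\xi^{(n^2-n+1)/3}$ is a primitive cube root of unity), one gets that $P=(a:b:c)$ is fixed iff $(a:b:c)=(\alpha^2 b:\alpha c:a)$, equivalently $a=\alpha^2\lambda b$, $b=\alpha\lambda c$, $c=\lambda a$ for some scalar $\lambda$, which forces $\lambda^3=1$ and the three solutions $(\alpha:1:1),(1:\alpha:1),(1:1:\alpha)$ after normalizing; one then checks each lies on $\mathcal{H}_n$ using that $\alpha^3=1$ and $\alpha^2+\alpha+1=0$. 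The computation for $\tau^2\mu$ is identical with $\alpha$ replaced by $\alpha^2$. I would verify membership on $\mathcal{H}_n$ by plugging $(\alpha:1:1)$ into $XY^n+YZ^n+X^nZ$: grouping the monomials and reducing the exponents of $\alpha$ modulo $3$ using $n\equiv 2\bmod 3$ gives a sum that collapses to a multiple of $1+\alpha+\alpha^2=0$.

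Finally, for the "moreover" clause I would invoke the subgroup classification: by Proposition~\ref{subgroups}(a), every order-$3$ subgroup of $G$ is either $\langle\tau\rangle$ (central, hence equal to its own conjugacy class) or conjugate to one of $T_{j(n^2-n+1)/3}=\langle\tau^{\,?}\mu\rangle$ — more precisely to $\langle\mu\rangle$, $\langle\tau\mu\rangle$, or $\langle\tau^2\mu\rangle$. Combined with Lemma~\ref{ost}, which says a nontrivial subgroup fixing a point is either contained in $\langle\sigma\rangle$ or has order exactly $3$, this shows the only candidates are subgroups of $\langle\sigma\rangle$ and conjugates of the three order-$3$ classes; since $\langle\mu\rangle$ has no fixed point by (1) and conjugation carries fixed points to fixed points, the groups with fixed points are precisely those in (2), (3), (4) and their conjugates. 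I expect the main obstacle to be purely bookkeeping: correctly writing down $\tau\mu$ and $\tau^2\mu$ as matrices (keeping track of whether $\tau=\mathrm{diag}(\alpha^2,\alpha,1)$ or $\mathrm{diag}(\alpha,\alpha^2,1)$, which depends on the normalization of $\sigma$ in Lemma~\ref{aux}) and then checking membership of the three candidate points on $\mathcal{H}_n$ without sign or exponent errors — there is no conceptual difficulty, only the risk of a miscalculation in the cyclic symmetry argument.
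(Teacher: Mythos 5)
Your overall plan coincides with the paper's: the fixed points in items (1)--(4) are found by direct computation (as in Proposition \ref{pf3}), and the ``moreover'' clause follows from Proposition \ref{subgroups}(a) together with Lemma \ref{ost}; your treatment of (2), (3), (4) and of the final clause is sound. However, your argument for item (1) contains a genuine error. From $\mu(a:b:c)=(b:c:a)=(a:b:c)$ you conclude $a=b=c$, forgetting the projective scalar: the correct system is $b=\lambda a$, $c=\lambda b$, $a=\lambda c$ with $\lambda^3=1$, so $\mu$ has \emph{three} fixed points in $\mathbb{P}^2$ (when $p\neq 3$), namely $(1:1:1)$, $(\alpha:\alpha^2:1)$ and $(\alpha^2:\alpha:1)$ --- exactly the points listed in Proposition \ref{pf3}. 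Indeed, if your reduction to $(1:1:1)$ were correct, it would contradict Proposition \ref{pf3}, where $\langle\mu\rangle$ \emph{does} fix $(\alpha:\alpha^2:1)$ and $(\alpha^2:\alpha:1)$ on $\mathcal{H}_n$ for $n\not\equiv 2\bmod 3$. It is striking that you handle the scalar $\lambda$ correctly in your computation for $\tau\mu$ and $\tau^2\mu$, but drop it precisely in the one place where it carries the content of the statement.

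The missing step is where the hypothesis $n\equiv 2\bmod 3$ actually enters item (1): you must check that the two extra fixed points do not lie on $\mathcal{H}_n$ in this congruence class. Since $\alpha^n=\alpha^2$ and $\alpha^{2n}=\alpha$, evaluating $XY^n+YZ^n+X^nZ$ at $(\alpha:\alpha^2:1)$ gives $\alpha\cdot\alpha+\alpha^2+\alpha^2=3\alpha^2$, and at $(\alpha^2:\alpha:1)$ gives $3\alpha$; both are nonzero because $3\mid n^2-n+1$ and $p\nmid n^2-n+1$ force $p\neq 3$ (the same observation that rules out $(1:1:1)$). With this check added, your proof of (1), and hence of the whole proposition, is complete and matches the paper's argument.
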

\begin{proof}
A straightforward computation as in Proposition \ref{pf3} gives the result on the fixed points of the respective groups. Once again, the last statement follows from Proposition \ref{subgroups} item (a) and Lemma \ref{ost}.
\end{proof}

Since we are assuming that $\mathcal{H}_n$ is nonsingular (i.e., $p \nmid n^2-n+1$), we have that $\mathcal{H}_n$ has a $p$-group if and only if $p=3$ and $3 \nmid n^2-n+1$. As we saw in Proposition \ref{pf3-2}, $3 \nmid n^2-n+1$ is equivalent to $n \not\equiv 2 \mod 3$. In the next proposition, we determine the ramification groups of the points outside $\Omega=\{P_1,P_2,P_3\}$ with nontrivial stabilizer.

\begin{prop}\label{rg}
Assume that $p=3$ and $n \not\equiv 2 \mod 3$. If $P \in \mathcal{H}_n\backslash \Omega$ has nontrivial stabilizer in $G$, then $G_P^{(0)}=G_P^{(1)} \cong \Z_3$ and $G_P^{(i)}=\{1\}$ for $i \geq 2$, where $G_P^{(i)}$ denotes the $i$-th ramification group of $P$ in $G$.
\end{prop}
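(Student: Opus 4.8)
The plan is to first reduce, up to conjugacy, to a single point with a completely explicit stabilizer, and then to carry out one local power‑series computation in a uniformizer. So let $P\in\mathcal{H}_n\backslash\Omega$ have nontrivial stabilizer $G_P$. By Lemma \ref{ost}, either $G_P\leq\langle\sigma\rangle$ or $|G_P|=3$; the first possibility is excluded because, by Lemma \ref{pfc}, every point outside $\Omega$ lies in a long orbit of $\sigma$, hence $G_P\cap\langle\sigma\rangle=\{1\}$. Thus $|G_P|=3$, and since $n\not\equiv 2\bmod 3$, Proposition \ref{pf3} tells us that $G_P$ is conjugate to $\langle\mu\rangle$ and that — because $p=3$ — the unique fixed point of $\langle\mu\rangle$ is $(1:1:1)$. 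Replacing $P$ by a suitable conjugate, I may assume $P=(1:1:1)$ and $G_P=\langle\mu\rangle\cong\Z_3$, which is already $G_P^{(0)}$. Since $G_P^{(0)}$ is a $p$-group, the standard fact that $G_P^{(0)}/G_P^{(1)}$ is cyclic of order prime to $p$ forces $G_P^{(1)}=G_P^{(0)}\cong\Z_3$; so the only thing left to prove is that $G_P^{(2)}=\{1\}$.

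For the local analysis I would pass to the affine chart $x=X/Z$, $y=Y/Z$, where $P=(1,1)$ and $f(x,y)=xy^n+x^n+y=0$. Since $\partial f/\partial y|_P=nxy^{n-1}+1|_P=n+1\not\equiv 0\bmod 3$ (again using $n\not\equiv 2\bmod 3$), the function $t:=x-1$ is a uniformizer at $P$, and implicit differentiation of $f(x,y)=0$ at $P$ yields the expansion $y=1-t-\tfrac{n(n-2)}{n+1}\,t^2+O(t^3)$. The automorphism $\mu$ acts on the function field by $x\mapsto y/x$, $y\mapsto 1/x$, so $\mu(t)-t=(y-x^2)/x$, and as $x$ is a unit at $P$ we get $v_P(\mu(t)-t)=v_P(y-x^2)$. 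Substituting the expansion gives $y-x^2=-3t-\tfrac{n^2-n+1}{n+1}\,t^2+O(t^3)$; in characteristic $3$ the linear term disappears, while the quadratic coefficient is nonzero because $p\nmid n^2-n+1$ and $p\nmid n+1$. Hence $v_P(\mu(t)-t)=2$. (The twin computation for $\mu^{-1}$, which acts by $x\mapsto 1/y$, $y\mapsto x/y$, gives $\mu^{-1}(t)-t=(1-xy)/y$ and likewise $v_P(\mu^{-1}(t)-t)=2$, so nothing changes if the opposite convention for the action on functions is used.) Therefore $G_P^{(1)}=G_P$ but $G_P^{(2)}=\{1\}$, and consequently $G_P^{(i)}=\{1\}$ for all $i\geq 2$.

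The only real work is the second‑order expansion of $y$ in the uniformizer $t=x-1$, i.e.\ producing the coefficient $-\tfrac{n(n-2)}{n+1}$ of $t^2$; everything else is bookkeeping. The subtle point — and the place where one must be careful — is that the vanishing of the linear term of $y-x^2$ in characteristic $3$ is precisely what makes the ramification wild, so the conclusion $G_P^{(2)}=\{1\}$ rests entirely on the $t^2$-coefficient $\tfrac{n^2-n+1}{n+1}$ being nonzero, which is guaranteed exactly by the smoothness hypothesis $p\nmid n^2-n+1$ together with $p\nmid n+1$ (the latter following from $n\not\equiv 2\bmod 3$). As a consistency check I would also note that this gives different exponent $d_P=(|G_P^{(0)}|-1)+(|G_P^{(1)}|-1)=4$ at each such $P$, which can be cross‑checked against the Riemann–Hurwitz formula \eqref{rh} once the $G$-orbit of $P$ is determined.
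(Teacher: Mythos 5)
Your proof is correct and follows essentially the same route as the paper: reduce via Lemma \ref{ost}, Lemma \ref{pfc} and Proposition \ref{pf3} to $P=(1:1:1)$ with $G_P=\langle\mu\rangle$, take $t=x-1$ as uniformizer, and show $v_P(\mu(t)-t)=v_P(y-x^2)=2$ from the quadratic coefficient of the local expansion of $y$. The only (welcome) difference is that you compute $a_2=-\tfrac{n(n-2)}{n+1}$ uniformly by implicit differentiation, so the $t^2$-coefficient of $y-x^2$ is $-\tfrac{n^2-n+1}{n+1}$ and its nonvanishing is exactly smoothness, whereas the paper splits into the cases $n\equiv 0$ (inflection argument via Theorem \ref{TeoWei}) and $n\equiv 1 \bmod 3$ (a Hasse-derivative formula); both computations agree modulo $3$ under the hypothesis $n\not\equiv 2\bmod 3$.
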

\begin{proof}
If $P$ is a point as in the statement, then by Proposition \ref{pf3} we may assume that $P=(1:1:1)$. Recall that the stabilizer of $P$ is $G_P=\langle \mu \rangle \cong \Z_3$. Set $F=XY^n+YZ^n+X^nZ$. Then $\kk(x,y)$ is the function field of $\mathcal{H}_n$, where $x=X/Z \mod F$ and $y=Y/Z \mod F$. Since the tangent line to $\mathcal{H}_n$ at $P$ is given by $X+Y+Z=0$, we have that $t=x-1$ is a local parameter at $P$. Hence $\mu(x)=y/x$ and 
$$
\mu(t)-t= \mu(x)-x=\frac{y-x^2}{x}.
$$
Consider the local expansions of $x=1+t$ and $y=1-t+a_2t^2+a_3t^3+\cdots$ at $P$. Then
\begin{equation}\label{val}
v_P\left(\frac{y-x^2}{x}\right)=v_P((a_2-1)t^2+a_3t^3+\cdots ).
\end{equation}
If $n \equiv 0 \mod 3$, then $P$ is an inflection point (by Theorem \ref{TeoWei}), which means that $v_P(y+x+1)=v_P(y+t-1) >2$. Hence $a_2=0$, which gives $v_P(\mu(t)-t)=2$ by \eqref{val}.
Assume now $n \equiv 1 \mod 3$. In this case, $a_2 \neq 0$. By the local expansion of $y$ at $P$, we have that $a_2=D_t^{(2)}(y)(P)$. Since
$$
D_t^{(2)}(y)=\frac{ny^{n-1}(y^n+n(t+1)^{n-1})}{(n(t+1)y^{n-1}+1)^2},
$$
we obtain $a_2=\frac{n}{n+1} \neq 1$, and so $v_P(\mu(t)-t)=2$. 
\end{proof}

Now we are in a position to present the list of possible genera of the quotients of $\mathcal{H}_n$.

\begin{thm}
The list of all possible values of $g(\mathcal{H}_n/H)$, where $H \leq G$, is given below.
\begin{itemize}
\item[(a)] Case $n \not\equiv 2 \mod 3$:
\begin{enumerate}
\item   $g(\mathcal{H}_n/T_{n^2-n+1})=\frac{n^2-n}{6}$.
   \item  $g(\mathcal{H}_n/S_d)=\frac{n^2-n+1-d}{2d}$, where $d | n^2-n+1$.
	\item $g(\mathcal{H}_n/T_{n^2-n+1}\cdot S_d)=\frac{n^2-n+1-d}{6d}$, where $d | n^2-n+1$.
	\end{enumerate}
\item [(b)] Case $n \equiv 2 \mod 3$:
\begin{enumerate}
   \item  $g(\mathcal{H}_n/T_{0})=\frac{n^2-n-2}{6}$.
   \item  $g(\mathcal{H}_n/T_{n^2-n+1})=\frac{n^2-n+4}{6}$.
	\item   $g(\mathcal{H}_n/T_{\frac{n^2-n+1}{3}})=g(\mathcal{H}_n/T_{2\frac{n^2-n+1}{3}})=\frac{n^2-n-2}{6}$.
	\item   $g(\mathcal{H}_n/S_d)=\frac{n^2-n+1-d}{2d}$, where $d| n^2-n+1$.
	\item   $g(\mathcal{H}_n/T_{n^2-n+1}\cdot S_d)=\frac{n^2-n+1+3d}{6d}$, where $d|n^2-n+1$ and $3 \nmid d$.
	\item   $g(\mathcal{H}_n/T_{\frac{n^2-n+1}{3}}\cdot S_d)=g(\mathcal{H}_n/T_{2\frac{n^2-n+1}{3}}\cdot S_d)=\frac{n^2-n+1-3d}{6d}$, where $d|n^2-n+1$ and $3 \nmid d$.
	\item $g(\mathcal{H}_n/T_{n^2-n+1}\cdot S_d)=\frac{n^2-n+1-d}{6d}$, where $d|n^2-n+1$ and $3 |d$.
		\end{enumerate}
\end{itemize} 
\end{thm}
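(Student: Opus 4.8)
The plan is to compute each genus $g(\mathcal{H}_n/H)$ by applying the Riemann--Hurwitz formula \eqref{rh} to the cover $\mathcal{H}_n \to \mathcal{H}_n/H$, using the classification of subgroups (Proposition \ref{subgroups}), the classification of points with nontrivial stabilizer (Lemmas \ref{pfc}, \ref{ost} and Propositions \ref{pf3}, \ref{pf3-2}), and the ramification groups in the nontame case (Proposition \ref{rg}). Recall that $g(\mathcal{H}_n) = \binom{n-1}{2} = \tfrac{(n-1)(n-2)}{2}$, so $2g(\mathcal{H}_n) - 2 = n^2 - 3n = n(n-3)$; this is the left-hand side of \eqref{rh} in the form $d(d-3)$ with $d = n$. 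Since it suffices to compute the genus for one representative of each conjugacy class, the list in Proposition \ref{subgroups} reduces the work to the finitely many cases displayed in the theorem.

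First I would handle the tame cases, where every stabilizer has order prime to $p$. For $H = S_d \leq \langle\sigma\rangle$: by Lemma \ref{pfc}, $\sigma$ (hence every nontrivial element of $S_d$) fixes exactly the three points of $\Omega$ and acts with long orbits elsewhere, so the cover $\mathcal{H}_n \to \mathcal{H}_n/S_d$ is ramified precisely over the images of $P_1, P_2, P_3$, each totally ramified of index $d$; plugging $\sum_{P}\sum_{i\geq 0}(|(S_d)_P^{(i)}|-1) = 3(d-1)$ into \eqref{rh} gives $n(n-3) = d(2g(\mathcal{H}_n/S_d)-2) + 3(d-1)$, and solving yields $g = \tfrac{n^2-n+1-d}{2d}$. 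For $H = T_0 = \langle\tau\rangle$ when $n\equiv 2 \bmod 3$: again $\tau$ fixes only $\Omega$ (Proposition \ref{pf3-2}(2)), $|H|=3$, so $g = \tfrac{n^2-n-2}{6}$. For $H = T_i$ of order $3$ (the conjugates of $\langle\mu\rangle$, $\langle\tau\mu\rangle$, $\langle\tau^2\mu\rangle$), use Propositions \ref{pf3} and \ref{pf3-2} to count fixed points: $\langle\mu\rangle$ fixes $2$ points when $p\neq 3$ (none on $\Omega$) and $0$ points when $p=3$ but instead fixes $(1:1:1)$ — wait, it fixes $1$ point when $p=3$; and each of $\langle\tau\mu\rangle$, $\langle\tau^2\mu\rangle$ fixes $3$ points. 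For the tame subcases ($p\neq 3$) this gives the numbers in (a)(1) and (b)(2),(3). For $H = T_i\cdot S_d$ of order $3d$: combine the two analyses — the $S_d$-part contributes ramification over $\Omega$ and the order-$3$ part contributes over its fixed points, being careful whether the fixed points of the order-$3$ element lie over branch points of $S_d$ or not (they do not, since they are off $\Omega$), and whether $3\mid d$, which determines whether $T_i\cdot S_d$ contains a conjugate of $\langle\mu\rangle$ or of $\langle\tau\mu\rangle$-type; this separation accounts for the split between (b)(5),(6),(7).

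The genuinely nontame case is $p = 3$ with $n\not\equiv 2\bmod 3$ and $3\mid |H|$, i.e. $H$ contains a conjugate of $\langle\mu\rangle \cong \Z_3$. Here Proposition \ref{rg} is exactly what is needed: at the fixed point $P = (1:1:1)$ (and its conjugates under $\langle\sigma\rangle$, if $H$ also contains a power of $\sigma$) one has $G_P^{(0)} = G_P^{(1)}\cong\Z_3$ and $G_P^{(i)}=\{1\}$ for $i\geq 2$, so the local contribution to \eqref{rh} from such a point is $(|H_P^{(0)}|-1) + (|H_P^{(1)}|-1) = 2 + 2 = 4$ rather than the $2$ one would get in the tame case. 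For $H = T_{n^2-n+1}\cong\Z_3$ this replaces the tame count $2(3-1)=4$ over two points by $4$ over one point (same total, but the genus formula changes because in the tame $p\neq 3$ case the two fixed points were distinct branch points), giving $g = \tfrac{n^2-n}{6}$ in (a)(1); and for $H = T_{n^2-n+1}\cdot S_d$ the $\langle\mu\rangle$-fixed points, permuted by $S_d$, form a single orbit of size $n^2-n+1$ over which the extension is wildly ramified with the above jump data, while $\Omega$ contributes the tame $S_d$-ramification, and assembling these in \eqref{rh} yields $g = \tfrac{n^2-n+1-d}{6d}$ in (a)(3).

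The main obstacle is bookkeeping rather than any single hard idea: one must correctly determine, for each subgroup $H$ in Proposition \ref{subgroups}, the full list of $H$-orbits of points with nontrivial stabilizer, the order of each stabilizer, whether that stabilizer is tame or wild, and — in the wild case — invoke Proposition \ref{rg} for the ramification filtration; then sum $\sum_P\sum_{i\geq 0}(|H_P^{(i)}|-1)$ and solve \eqref{rh} for $g(\mathcal{H}_n/H)$. The subtle points are (i) keeping track of which fixed points of an order-$3$ element coincide with points of $\Omega$ (they never do, by Propositions \ref{pf3}, \ref{pf3-2}) so that ramification contributions simply add, and (ii) in the composite groups $T_i\cdot S_d$, noting that $S_d$ permutes the fixed-point set of the order-$3$ part transitively (size $n^2-n+1$ if $3\nmid d$, smaller if $3\mid d$ since then $\tau\in H$ and the relevant order-$3$ element is conjugate to $\mu$ with only the $\langle\sigma\rangle$-orbit of $(1:1:1)$), which explains the different denominators and the $3\nmid d$ versus $3\mid d$ dichotomy in part (b).
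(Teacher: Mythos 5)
Your overall strategy (Riemann--Hurwitz \eqref{rh} applied to each conjugacy class from Proposition \ref{subgroups}, with the fixed-point data of Lemmas \ref{pfc}, \ref{ost} and Propositions \ref{pf3}, \ref{pf3-2}, and Proposition \ref{rg} in the wild case) is exactly the paper's, but the execution has a genuine error at its base: $\mathcal{H}_n$ has degree $n+1$, not $n$, so $g(\mathcal{H}_n)=\binom{n}{2}=\tfrac{n(n-1)}{2}$ and the left-hand side of \eqref{rh} is $(n+1)(n-2)=n^2-n-2$, not $\binom{n-1}{2}$ and $n(n-3)$ as you assert (compare \eqref{degramHn}, which is \eqref{degram} with $d=n+1$). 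With your left-hand side, the one computation you actually display, $n(n-3)=d\bigl(2g(\mathcal{H}_n/S_d)-2\bigr)+3(d-1)$, gives $g=\tfrac{n^2-3n+3-d}{2d}$ (the formula for $\mathcal{H}_{n-1}$), not the claimed $\tfrac{n^2-n+1-d}{2d}$; so either every entry of the theorem would come out wrong, or the stated conclusions simply do not follow from the equations you wrote. As a sanity check, $d=n^2-n+1$ must give genus $0$, which your version fails.

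The orbit bookkeeping in the composite cases also goes wrong in ways that would change the answer. In (a)(3) with $p=3$, the wildly ramified points off $\Omega$ are the fixed points of the $d$ conjugates of $\langle\mu\rangle$ contained in $H=T_{n^2-n+1}\cdot S_d$; they form one $H$-orbit of size $d$ (contribution $4d$), not ``a single orbit of size $n^2-n+1$'' as you write ($S_d$ has only $d$ elements, and a contribution $4(n^2-n+1)$ would wreck the count). In case (b) one has $p\neq 3$ automatically, $(1:1:1)\notin\mathcal{H}_n$, and by Proposition \ref{pf3-2} $\mu$ has \emph{no} fixed points, so your description of the $3\mid d$ versus $3\nmid d$ dichotomy is backwards: since $\sigma^{l}\mu\sigma^{-l}=\mu\sigma^{-l(n+1)}$ and $\gcd(n+1,n^2-n+1)=3$, for $3\nmid d$ all order-$3$ subgroups of $T_{n^2-n+1}\cdot S_d$ outside $\langle\sigma\rangle$ are conjugates of $\langle\mu\rangle$ and the cover ramifies only over $\Omega$ (whence the larger genus $\tfrac{n^2-n+1+3d}{6d}$ in (b)(5)), whereas for $3\mid d$ the group also contains conjugates of $\langle\tau\mu\rangle$ and $\langle\tau^2\mu\rangle$, whose fixed points give two extra short orbits of size $d$ (the orbits of $(\alpha:1:1)$ and $(\alpha^2:1:1)$), yielding (b)(7); case (b)(6) comes from $T_{j\frac{n^2-n+1}{3}}\cdot S_d$ with $3\nmid d$. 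Finally, a small point: in (a)(1) the wild contribution at $p=3$ (one point with $|G_P^{(0)}|=|G_P^{(1)}|=3$, total $4$) equals the tame contribution for $p\neq 3$ (two points contributing $2$ each), so the genus $\tfrac{n^2-n}{6}$ is the same in both subcases --- nothing ``changes'' in the formula. Until the correct value $2g(\mathcal{H}_n)-2=n^2-n-2$ and the correct short-orbit inventory for each $H$ are in place, the stated genera are not established by your argument.
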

\begin{proof}
The fact that the list above is exhaustive follows from Proposition \ref{subgroups}.
The genera follow from Lemmas \ref{pfc} and \ref{ost}, Propositions \ref{pf3}, \ref{pf3-2}, \ref{rg} and the Riemann-Hurwitz genus formula \eqref{rh}. For the sake of illustration, we compute $g(\mathcal{H}_n/T_{n^2-n+1})$ for $p=3$ with $n \not\equiv 2 \mod 3$, and $g(\mathcal{H}_n/T_{n^2-n+1}\cdot S_d)$ where $d\mid (n^2-n+1)$, $n \equiv 2 \mod 3$  and $3|d$ (note that in this case, $p \neq 3$). The other cases are analogous. 

Suppose $p=3$ and $n \not\equiv 2 \mod 3$. By Proposition \ref{pf3}, there is only one point with nontrivial stabilizer in $T_{n^2-n+1}$, namely $P=(1:1:1)$. Moreover, Proposition \ref{rg} gives $|(T_{n^2-n+1})_P^{(0)}|=|(T_{n^2-n+1})_P^{(1)}|=3$, and $|(T_{n^2-n+1})_P^{(i)}|=1$ for $i \geq 2$. Hence, the Riemann-Hurwitz genus formula \eqref{rh} gives
$$
2g(\mathcal{H}_n)-2=3(2g(\mathcal{H}_n/T_{n^2-n+1})-2)+\sum_{i \geq 0}\big(|(T_{n^2-n+1})_P^{(i)}|-1 \big),
$$
and so $g(\mathcal{H}_n/T_{n^2-n+1})=\frac{n^2-n}{6}$.

Now assume $n \equiv 2 \mod 3$, and let $d$ be a divisor of $n^2-n+1$ such that $3|d$. The set $\Omega$ is an orbit of size $3$ of $H=T_{n^2-n+1}\cdot S_d$. Via Proposition \ref{pf3-2}, the orbits of both points $Q_1=(\alpha:1:1)$ and $Q_2=(\alpha^2:1:1)$ by $H$ have size $d$, and the remaining orbits by $H$ are long. Since $p \nmid 3d$, the Riemann-Hurwitz genus formula \eqref{rh} provides
$$
2g(\mathcal{H}_n)-2=3d(2g(\mathcal{H}_n/T_{n^2-n+1}\cdot S_d)-2)+3d-3+2(3d-d).
$$
Hence $g(\mathcal{H}_n/T_{n^2-n+1}\cdot S_d)=\frac{n^2-n+1-d}{6d}$.
\end{proof}

\section{An application: nonisomorphic families of plane nonsingular maximal curves }

In this section, we present a further application of our results. More in detail, we present a new family of plane nonsingular maximal curves $\mathcal{C}_n$. Here, \emph{new} means  that $\mathcal{C}_n$ is not isomorphic neither to the Fermat curve $\cF_n$ nor to the Hurwitz curve $\cH_n$.   In particular, we answer a question raised by Fernando Torres during the "Workshop on Algebraic curves and Function Fields over a Finite Field" held in Perugia in February 2015. 

\begin{defi}
Let $\rm{char}(\kk) \neq 2$.  Let $\kk[x]$ be the polynomial ring of univariate polynomials with coefficients in $\kk$. We define the $n$-th Lucas-type polynomial $L_n(x)$ as 
\begin{itemize}
\item $L_0(x) = 2$;
\item $L_1(x) = x$;
\item $L_n(x) = xL_{n-1}(x)-L_{n-2}(x)$. 
\end{itemize}
\end{defi}

A fundamental property of the Lucas-type polynomial  $L_n(x)$ is given by the following Lemma.
\begin{lem}\label{lem:fund}
For $n \geq 2$, the $n$-th Lucas-type polynomial satisfies
        \begin{equation}\label{FundProp}
        L_n\Bigl(x+\frac{1}{x}\Bigr)=x^n+\frac{1}{x^n}.
        \end{equation}
\end{lem}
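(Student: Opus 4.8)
The statement is the classical identity $L_n(x+1/x)=x^n+1/x^n$, and the natural approach is induction on $n$ using the recurrence defining $L_n$. First I would verify the base cases directly: for $n=0$, $L_0(x+1/x)=2=x^0+1/x^0$, and for $n=1$, $L_1(x+1/x)=x+1/x=x^1+1/x^1$. Although the Lemma is stated for $n\geq 2$, including $n=0,1$ as base cases makes the induction cleaner, since the recurrence $L_n=xL_{n-1}-L_{n-2}$ reaches back two steps.

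For the inductive step, assume the identity holds for $n-1$ and $n-2$ with $n\geq 2$. Writing $u=x+1/x$ and applying the defining recurrence,
\begin{equation*}
L_n(u)=u\,L_{n-1}(u)-L_{n-2}(u)=\Bigl(x+\frac{1}{x}\Bigr)\Bigl(x^{n-1}+\frac{1}{x^{n-1}}\Bigr)-\Bigl(x^{n-2}+\frac{1}{x^{n-2}}\Bigr).
\end{equation*}
Expanding the product gives $x^n+x^{n-2}+x^{-(n-2)}+x^{-n}$, and subtracting $x^{n-2}+x^{-(n-2)}$ leaves exactly $x^n+x^{-n}$, which completes the induction.

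There is no real obstacle here — the only point requiring a little care is the bookkeeping at the bottom of the recursion. One should treat this identity as one between Laurent polynomials in $x$ (or, equivalently, a polynomial identity in $x$ after clearing denominators), so that the manipulation $L_n(x+1/x)=x^n+1/x^n$ is literally an equality in $\kk[x,x^{-1}]$; since $L_n$ is a genuine polynomial, the substitution $x\mapsto x+1/x$ is well defined and the result is automatically a Laurent polynomial. The step $n=2$ can also be checked by hand ($L_2(x)=x^2-2$, so $L_2(x+1/x)=(x+1/x)^2-2=x^2+1/x^2$) to reassure the reader before invoking the general induction.
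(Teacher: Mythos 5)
Your proof is correct and follows essentially the same route as the paper's: induction on $n$ using the defining recurrence $L_n = xL_{n-1}-L_{n-2}$, with the product $(x+1/x)(x^{n-1}+1/x^{n-1})$ expanded and the middle terms cancelled. Anchoring the induction at $n=0,1$ instead of $n=2$ is only a cosmetic difference.
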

\begin{proof}
By induction on $n$. For $n = 2$, a straightforward computation yields 
$$
L_2\Bigl(x+\frac{1}{x}\Bigr) = x^2+\frac{1}{x^2}.
$$
Next, assume $n > 2$ and  that property \eqref{FundProp} holds for any $k < n$. Then, we get
$$
L_n\Bigl(x+\frac{1}{x}\Bigr)= \Bigl(x+\frac{1}{x}\Bigr)L_{n-1}\Bigl(x+\frac{1}{x}\Bigr)-L_{n-2}\Bigl(x+\frac{1}{x}\Bigr).
$$
By the induction hypothesis, the latter equality reads
$$
L_n\Bigl(x+\frac{1}{x}\Bigr) = \Bigl(x+\frac{1}{x}\Bigr)\Bigl(x^{n-1}+\frac{1}{x^{n-1}}\Bigr)- \Bigl(x^{n-2}+\frac{1}{x^{n-2}}\Bigr),
$$
whence our assertion follows.
\end{proof}

\begin{prop}
Let ${\rm char}(\kk) = p >2$, and let $n$ be a divisor of $\frac{p^r+1}{2}$, $r\geq 1$. Then the curve $\mathcal{C}_n$ given by the affine equation
$$
\mathcal{C}_n : y^n=L_n(x)
$$
is a smooth plane $\mathbb{F}_{p^{2r}}$ maximal curve.
\end{prop}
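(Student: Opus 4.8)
The plan is to establish the two claims in turn: that $\mathcal{C}_n$ is a smooth plane curve, and that it is $\mathbb{F}_{p^{2r}}$-maximal, the second by realizing $\mathcal{C}_n$ as the image, under a morphism defined over $\mathbb{F}_{p^{2r}}$, of a Fermat curve of degree $p^r+1$ (which is a form of the Hermitian curve).

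\emph{Smoothness.} Since $L_n$ is monic of degree $n$, the projective closure of $\mathcal{C}_n$ is the degree-$n$ plane curve $Y^n=Z^n L_n(X/Z)$; its points with $Z=0$ are exactly the $n$ distinct points $(1:\omega:0)$ with $\omega^n=1$ (there are $n$ of them because $p\nmid n$, as $n\mid\frac{p^r+1}{2}$ with $p>2$), and the Jacobian criterion shows that these are smooth and that an affine point can be singular only if it lies on $\{Y=0\}$ with its $x$-coordinate a repeated root of $L_n$. So it suffices to prove that $L_n$ is separable. For this I would differentiate the identity $L_n(x+x^{-1})=x^n+x^{-n}$ of Lemma~\ref{lem:fund}, which yields $L_n'(x+x^{-1})=n\,(x^n-x^{-n})/(x-x^{-1})$ as an identity of rational functions in $x$; evaluating at a root $w=x_0+x_0^{-1}$ of $L_n$ — so that $x_0^{2n}=-1$, whence $x_0^n-x_0^{-n}=2x_0^n\neq 0$ and $x_0^2\neq 1$ — gives $L_n'(w)=2n\,x_0^n/(x_0-x_0^{-1})\neq 0$, since $p\nmid 2n$. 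Hence every root of $L_n$ is simple, so $L_n$ is separable and $\mathcal{C}_n$ is a smooth plane curve of degree $n$ and genus $\binom{n-1}{2}$; being separable of degree $\geq 2$, $L_n$ is not an $\ell$-th power for any prime $\ell\mid n$, so $\mathcal{C}_n$ is also geometrically irreducible.

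\emph{Maximality.} Write $q=p^r$, so that $2n\mid q+1$, and let $\mathcal{F}_m$ denote the Fermat curve $X^m+Y^m+Z^m=0$. I would construct a non-constant morphism $\mathcal{F}_{q+1}\longrightarrow\mathcal{C}_n$ over $\mathbb{F}_{q^2}$ as the composition of
\[
\mathcal{F}_{q+1}\longrightarrow\mathcal{F}_{2n},\qquad (X:Y:Z)\longmapsto\bigl(X^{(q+1)/(2n)}:Y^{(q+1)/(2n)}:Z^{(q+1)/(2n)}\bigr)
\]
(well defined because $2n\mid q+1$) with the map $\mathcal{F}_{2n}\longrightarrow\mathcal{C}_n$ given on the affine models $a^{2n}+b^{2n}+1=0$ and $y^n=L_n(x)$ by $(a,b)\longmapsto(a+a^{-1},\,\mu\,b^2a^{-1})$, where $\mu\in\mathbb{F}_{q^2}$ is chosen with $\mu^n=-1$. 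Such a $\mu$ exists in $\mathbb{F}_{q^2}$ because $-1$ is an $n$-th power there: $\gcd(n,q^2-1)=n$ (as $n\mid q+1$), and $2n\mid q+1\mid q^2-1$, so $n\mid\frac{q^2-1}{2}$. Using Lemma~\ref{lem:fund}, the image of the second map lies on $\mathcal{C}_n$:
\[
\bigl(\mu\,b^2a^{-1}\bigr)^n=\mu^n b^{2n}a^{-n}=-\,b^{2n}a^{-n}=(a^{2n}+1)a^{-n}=a^n+a^{-n}=L_n(a+a^{-1}).
\]
Since $\mathcal{F}_{q+1}$ and $\mathcal{F}_{2n}$ are smooth, these rational maps extend to morphisms, each with one-dimensional image and hence dominant, all defined over $\mathbb{F}_{q^2}$; so the composite $\mathcal{F}_{q+1}\to\mathcal{C}_n$ is a non-constant morphism over $\mathbb{F}_{q^2}$.

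\emph{Conclusion.} The Fermat curve $\mathcal{F}_{q+1}$ is $\mathbb{F}_{q^2}$-isomorphic to the Hermitian curve $\mathbf{v}(Y^q+Y-X^{q+1})$, hence $\mathbb{F}_{q^2}$-maximal; and a curve that is the non-constant image over $\mathbb{F}_{q^2}$ of an $\mathbb{F}_{q^2}$-maximal curve is itself $\mathbb{F}_{q^2}$-maximal, because the characteristic polynomial of the $q^2$-Frobenius on the Jacobian of the quotient divides that of the source (see, e.g., \cite{HKT}). Applying this to the morphism $\mathcal{F}_{q+1}\to\mathcal{C}_n$ shows that $\mathcal{C}_n$ is $\mathbb{F}_{p^{2r}}$-maximal. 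The only genuine computation here is the separability of $L_n$; the main points requiring care are that each map in the chain is a well-defined dominant morphism over $\mathbb{F}_{p^{2r}}$ — in particular the existence of $\mu$ with $\mu^n=-1$ in $\mathbb{F}_{p^{2r}}$ — and the use of the principle that quotients of maximal curves are maximal.
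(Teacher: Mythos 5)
Your proof is correct, and it differs from the paper's argument mainly in how maximality is transferred. The paper also exploits the identity of Lemma~\ref{lem:fund}, but it stops one level down: it exhibits $\mathcal{C}_n$ as a subcover of the generalized Fermat curve $\mathcal{G}_n:\,y^n=x^{2n}+1$ via $(x,y)\mapsto\bigl(x+\tfrac1x,\tfrac yx\bigr)$ and then simply cites the Tafazolian--Torres criterion \cite[Theorem 5]{TT} for the maximality of $\mathcal{G}_n$ over $\mathbb{F}_{p^{2r}}$; the separability of $L_n$ (hence smoothness and irreducibility of $\mathcal{C}_n$) is asserted with only a pointer to Lemma~\ref{lem:fund}. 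You instead push the covering all the way up to the Fermat curve $\mathcal{F}_{q+1}$, $q=p^r$ (the Hermitian model), through the chain $\mathcal{F}_{q+1}\to\mathcal{F}_{2n}\to\mathcal{C}_n$ --- note your second map factors exactly through the paper's $\psi$, since $(a,b)\mapsto(a,\mu b^{2})$ sends $\mathcal{F}_{2n}$ onto $\mathcal{G}_n$ --- and then invoke the Kleiman--Serre principle that a non-constant $\mathbb{F}_{q^2}$-image of an $\mathbb{F}_{q^2}$-maximal curve is maximal. What your route buys is self-containedness: you avoid the external maximality criterion of \cite{TT}, you verify the arithmetic point the paper leaves implicit (existence of $\mu\in\mathbb{F}_{q^2}$ with $\mu^n=-1$, which is exactly where the hypothesis $2n\mid q+1$ rather than just $n\mid q^2-1$ enters), and you actually prove separability of $L_n$ by differentiating the identity $L_n(x+x^{-1})=x^n+x^{-n}$, a detail the paper skips; the cost is a slightly longer argument and reliance on the standard facts that $\mathcal{F}_{q+1}$ is the Hermitian curve over $\mathbb{F}_{q^2}$ and that maximality descends along coverings, both of which are safely citable from \cite{HKT}.
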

\begin{proof}
Note that, if $p \nmid n$, Lemma \ref{lem:fund} ensures that $L_n(x)$ is separable. Hence, $\mathcal{C}_n$ is irreducible and nonsingular. By Lemma \ref{lem:fund}, we have that $\mathcal{C}_n$ is a subcover of the Generalized Fermat curve $\mathcal{G}_n$ of affine equation $\mathcal{G}_n: y^n=x^{2n}+1$. More in detail, let $\psi: \mathcal{G}_n \rightarrow \mathcal{C}_n$ being given by 
$$
\psi(x,y) = \Bigl( x+\frac{1}{x}, \frac{y}{x}\Bigr).  
$$
Then by Lemma \ref{lem:fund}, it follows that 
$$
L_n\Bigl(x+\frac{1}{x}\Bigr)= \frac{x^{2n}+1}{x^n} = \frac{y^n}{x^n},
$$
whence $\mathcal{C}_n$ is covered by $\mathcal{G}_n$. 

 Finally, by \cite[Theorem 5]{TT}, $\mathcal{G}_n$ is $\mathbb{F}_{p^{2r}}$-maximal if and only if $n \mid p^{r}+1$, whence our assertion follows. 
        \end{proof}

\begin{prop}\label{flexes}
 Let $\mathbb{K}$ be an algebraically closed field of characteristic $p > 2$, and let
$$\mathcal{C}_n:\, Y^n-L_n(X,Z)=0$$
  be a  plane curve defined over $\kk$ where  $p\nmid 2n$, $n \mid {\frac{p^r+1}{2}}$, and  $L_n(X,1)$ is the $n$-th Lucas-type polynomial. Then      $\mathcal{C}_n$  has either $n$  or $3n$ total inflection points. If $n>3$, then the  latter case occurs  if and only if $\mathcal{C}_n$ is  projectively equivalent  to  the Fermat curve  $$Y^{\frac{p^r+1}{2}}+ X^{\frac{p^r+1}{2}}+Z^{\frac{p^r+1}{2}}=0,$$
where $r\ge1 $.
\end{prop}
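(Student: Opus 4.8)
The plan is to compute the total inflection points of $\mathcal{C}_n$ directly and determine when the larger of the two possibilities occurs. First I would set up coordinates so that the cover $\psi:\mathcal{G}_n\to\mathcal{C}_n$ from the previous proposition is available, and recall that $\mathcal{G}_n:\,V^n=U^{2n}+1$. Since $\mathcal{C}_n$ is a smooth plane curve of degree $n$, Bézout's theorem forces $j(P)\le n$ for every point $P$, with $j(P)=n$ precisely at the total inflection points; so the task is to locate the points $P$ with $j(P)=n$. The key structural input is that $\mathcal{C}_n$ is $\mathbb{F}_{p^{2r}}$-maximal, hence its Frobenius linear series governs the Weierstrass points for the morphism of lines in the usual way (the ramification divisor of \eqref{ramification} is bounded by \eqref{degram}), and I would use this to bound $\#\{P: j(P)=n\}$ from above and a direct tangent-line computation to produce candidate inflections. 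The three ``obvious'' candidates come from the three points of $\mathcal{C}_n$ lying over the ramification/branch locus of the degree-$2$ map $U\mapsto U+1/U$, i.e. the points above $X+Z=0$, $X-Z=0$ (the images of $U=\pm1$) together with the point $Z=0$ at infinity; at each of these one checks that the fiber of $\psi$ collapses and the tangent line meets $\mathcal{C}_n$ with multiplicity $n$, yielding in general $3$ orbits of $n$ total inflections each — but for generic $n$ the automorphism structure forces all but one orbit to consist of non-total inflections, leaving exactly $n$ of them; the count $3n$ is the exceptional case.

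Next I would pin down the exceptional case. If $\mathcal{C}_n$ has $3n$ total inflections, then its tangent-line linear series has an unusually large ramification divisor concentrated on those $3n$ points, each contributing $j(P)-\epsilon = n-1$ (taking $\epsilon=1$, the generic order); plugging $\#\mathcal{W}=3n$ and $v_P(R)=n-1$ into \eqref{degram} with $d=n$ gives $3n(n-1) \le (1+\epsilon)n(n-3)+3n$, which pins $\epsilon$ and forces rigidity. The standard way to exploit ``$3n$ total inflection points on a smooth plane curve of degree $n$, no three collinear among the tangent data'' is to invoke the classification of smooth plane curves with many total inflection points: a smooth plane curve of degree $n$ possessing $3n$ total inflection points arranged so that the $3n$ tangent lines form a configuration of this type is projectively equivalent to the Fermat curve $X^n+Y^n+Z^n=0$ (the Fermat curve has exactly $3n$ total inflection points, at the intersections with the coordinate triangle). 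Combining this with $n=\frac{p^r+1}{2}$, forced by the maximality hypothesis $n\mid\frac{p^r+1}{2}$ together with the requirement that the Fermat curve of degree $n$ be itself $\mathbb{F}_{p^{2r}}$-maximal (which needs $n\mid p^r+1$ and, since here also $n$ must equal $\frac{p^r+1}{2}$ to match degrees and the maximality data, gives the stated exponent), identifies $\mathcal{C}_n$ with $Y^{(p^r+1)/2}+X^{(p^r+1)/2}+Z^{(p^r+1)/2}=0$. Conversely, the Fermat curve manifestly has $3n$ total inflection points, so the equivalence is genuine.

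The concrete route for the forward direction I would actually carry out: write the affine equation $y^n=L_n(x)$, compute the Hessian or, more efficiently, directly compute $j(P)$ using the relation $L_n(x+1/x)=x^n+1/x^n$ to reduce tangency computations on $\mathcal{C}_n$ to tangency computations on $\mathcal{G}_n$, where the total inflections of $y^n=x^{2n}+1$ are classically understood (they sit over $x^{2n}=0$, i.e. at infinity, and the fiber behavior at the ramification points of $\psi$). The hypothesis $n>3$ is used to rule out the low-degree coincidences (for $n=3$ the Hurwitz/Klein phenomena intervene and the classification of the $3n$-inflection case is not just the Fermat curve), and $p\nmid 2n$ guarantees separability of $L_n$ and tameness so that the order sequence is the generic $(0,1,2)$ away from the special points.

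\textbf{Main obstacle.} The genuinely hard step is the rigidity/classification statement: showing that a smooth plane curve of degree $n>3$ with $3n$ total inflection points — with the incidence structure forced by our situation — must be projectively the Fermat curve. One must first establish that the $3n$ inflection tangents form (up to projectivity) the $3n$ lines through a coordinate triangle, which requires controlling the group generated by the symmetries permuting the three orbits of inflections (here the cyclic $\mu$-type symmetry of $\mathcal{G}_n$ descends and, together with the $\mu_n$ fiber action, must be shown to be large enough to force the triangle configuration), and only then invoke/re-prove that the Fermat curve is the unique smooth degree-$n$ curve with that inflection configuration. Getting the bookkeeping of these orbits right — in particular ensuring one does not accidentally count the $n$ generic inflections as part of the exceptional $3n$ — is where the care is needed; everything else is Bézout plus the ramification-divisor degree formula \eqref{degram} plus the maximality input from \cite{TT}.
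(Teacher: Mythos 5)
There is a genuine gap at the heart of your proposal: the step you yourself flag as the ``main obstacle'' --- that a smooth plane curve of degree $n>3$ with $3n$ total inflection points in the relevant configuration must be projectively the Fermat curve --- is exactly what needs proof, and you neither prove it nor cite an available result (no such classification is standard, least of all in characteristic $p$, where inflectional behaviour can be pathological). The paper does not argue this way at all. Its key step is elementary and completely explicit: if $P=(x_0:y_0:1)$ with $x_0y_0\neq 0$ is a total inflection outside the $n$ obvious ones, its tangent line $Y=aX+bZ$ yields the polynomial identity $(ax+b)^n-L_n(x)=(a^n-1)(x-x_0)^n$; substituting $x\mapsto x+\frac1x$ and using $L_n\bigl(x+\frac1x\bigr)=\frac{x^{2n}+1}{x^n}$ gives $a^n\bigl(x^2+\frac{b}{a}x+1\bigr)^n+(1-a^n)\bigl(x^2-x_0x+1\bigr)^n=x^{2n}+1$, and comparing the coefficients of $x^{2n-1},\dots,x^{2n-4}$ (this is where $n>3$ enters) forces $a^n=\tfrac12$ and $x_0=\tfrac{b}{a}=2$, hence $(x+1)^{2n}+(x-1)^{2n}=2x^{2n}+2$. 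Differentiating and writing $2n-1=kp^r$ with $p\nmid k$ reduces this to $(x^{p^r}+1)^k+(x^{p^r}-1)^k=2(x^{p^r})^k$, which fails for $k=2$ and for $k\ge 3$ would make the non-rational smooth curve $X^k+Y^k=2Z^k$ rational; so $k=1$ and $n=\frac{p^r+1}{2}$. Only at that point is maximality used, via the classification of maximal curves of the relevant genus in \cite{CHKT}, to conclude projective equivalence with the Fermat curve. None of your ingredients (St\"ohr--Voloch counts, symmetry of $\mathcal{G}_n$, an asserted inflection-configuration rigidity) substitutes for this chain, so the ``if'' direction of the dichotomy is unproved in your sketch.

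Two further concrete errors: the $n$ guaranteed total inflections are the points $(x_i:0:1)$ at the zeros of $L_n$, with vertical tangents $X=x_iZ$ --- not points lying over the branch locus $x=\pm 2$ of $u\mapsto u+\frac1u$ or over $Z=0$ (the paper checks explicitly that the $2n$ points of $\mathcal{C}_n\cap\{XZ=0\}$ are \emph{not} inflections; the value $x_0=2$ only appears for the extra inflection in the Fermat case). And your ramification-divisor bookkeeping is off: the generic order here is $\epsilon=2$, so $\deg R=3n(n-3)+3n=3n(n-2)$, and $3n$ total inflections each contributing $j(P)-\epsilon=n-2$ give exactly equality --- the count is consistent with the Fermat case and ``pins'' nothing; your inequality with $\epsilon=1$ and contributions $n-1$ is not the correct application of \eqref{degram} and would wrongly exclude the $3n$ case altogether.
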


\begin{proof}
Let $\mathcal{I} = \{P_i=(x_i:0:1) \mid L_n(x_i,1)=0, i=1,\cdots,n\}$. It is immediately seen that the points in $\mathcal{I}$ 
 are  total inflection points of $\mathcal{C}_n$ whose tangent lines are $X=x_iZ$, and  that no other inflection point of $\mathcal{C}_n$ has tangent line of this latter type. Let $P\in \mathcal{C}_n \backslash \mathcal{I}$ be a total inflection point of $\mathcal{C}_n$, and let $\ell$ be the corresponding tangent line. From  \eqref{FundProp}, it is easy  to  check that        $\ell$  cannot  be of type  $bY+cZ=0$,  and thus  $\ell$  has an affine  equation  of type   $Y=aX+bZ$, where  $a\neq 0$. A computation shows that the $2d$ points in $\mathcal{C}_n \cap \{XZ = 0\}$ are not inflection points. Hence, let $P=(x_0:y_0:1)$, with $x_0y_0 \neq 0$ be a further total inflection point for $\mathcal{C}_n$. Then  we have the polynomial identity  
       
       $$(ax+b)^n-L_n(x)=(a^n-1)(x-x_0)^n,$$
       
       where $a^n\neq 1$. Also,  the substitution $x \rightarrow x+\frac{1}{x}$ yields 
       
       \begin{equation}\label{poly-id}
       a^n(x^2+\frac{b}{a}x+1)^n+(1-a^n)(x^2-x_0x+1)^n=x^{2n}+1,
       \end{equation}
       as $ L_n\Bigl(x+\frac{1}{x}\Bigr)= \frac{x^{2n}+1}{x^n}$. 
        
In the expansion of $(x^2+tx+1)^{n}$, note that   the coefficients of $x^{2n-1}$, $x^{2n-2}$, $x^{2n-3}$ and $x^{2n-4}$
are respectively $nt$,  $n +\binom{n}{2}t^2,$  $2\binom{n}{2}t+\binom{n}{3}t^3$, and  $\binom{n}{2}+ 3\binom{n}{3}t^2 + \binom{n}{4}t^4.$  After  comparing coefficients in  \eqref{poly-id}, as $n>3$ and   $p>0$, we conclude that $a^n=\frac{1}{2}$     and  $\frac{b}{a}=x_0=2$. Thus \eqref{poly-id}
becomes 
$$ (x+1)^{2n}+(x-1)^{2n}=2 x^{2n}+2.$$
Differentiating both sides of  the equation above, and  writing  $2n-1=kp^r$, with $p\nmid k$, we obtain the  identity
 \begin{equation}\label{fermat-id}
   (x^{p^r}+1)^k+(x^{p^r}-1)^{k}=2 (x^{p^r})^k.
  \end{equation}
Clearly   \eqref{fermat-id} does not  hold for $k=2$, and since the smooth  curve   $X^k+Y^k=2Z^k$ is not rational for $k\geq 3$, it follows that $k=1$. Therefore,  $n=\frac{p^r+1}{2}$. The last assertion follows from the $\F_{p^{2r}}$-maximality of   $\mathcal{C}_n$, and from  the classification of  maximal curves of genus $g = g(\mathcal{C}_n)$, see \cite[Theorem 1.1]{CHKT}.
\end{proof}

\begin{rem}
For $n = 3$, it is easily seen that the curves $\cF_3$ and $\mathcal{C}_3$ are isomorphic via a fractional transformation.
\end{rem}

\begin{cor} Let $q$ be a power of a  prime  $p>2$. For any divisor $n$ of $q+1$,  with  $3<n<\frac{q+1}{2}$, there exist a  smooth
plane $\F_{q^2}$-maximal curve $\mathcal{C}$   such that neither a Fermat $\mathcal{F}_n$ nor  Hurwitz curve $\mathcal{H}_{n-1}$ of degree $n$  is isomorphic  to  
$\mathcal{C}$.
\end{cor}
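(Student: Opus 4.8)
The plan is to take $\mathcal{C}:=\mathcal{C}_n$, the Lucas-type curve with affine model $y^n=L_n(x)$, and to distinguish it from $\mathcal{F}_n$ and $\mathcal{H}_{n-1}$ by counting total inflection points. First I would check that $\mathcal{C}_n$ is a legitimate candidate over $\F_{q^2}$: since $n\mid q+1$ we have $p\nmid n$, hence $p\nmid 2n$, so $\mathcal{C}_n$ is a smooth plane curve of degree $n$ defined over $\F_{q^2}$. For maximality, Lemma~\ref{lem:fund} shows that $\psi(x,y)=(x+1/x,\,y/x)$ realizes $\mathcal{C}_n$ as a subcover of $\mathcal{G}_n:\,y^n=x^{2n}+1$, and by \cite[Theorem~5]{TT} the curve $\mathcal{G}_n$ is $\F_{q^2}$-maximal precisely because $n\mid q+1$; since a curve covered by an $\F_{q^2}$-maximal curve is again $\F_{q^2}$-maximal, so is $\mathcal{C}_n$. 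It remains to separate $\mathcal{C}_n$ from $\mathcal{F}_n$ and from $\mathcal{H}_{n-1}$ up to isomorphism.

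Here I would use the rigidity of plane models: since $n>3$, a smooth plane curve of degree $n$ carries a unique $g^2_n$, so any isomorphism between two such curves is the restriction of a projective transformation (this is precisely why $\Aut_\kk(\mathcal{H}_n)\leq\PGL(3,\kk)$; cf.\ \cite{Chang} and the proof of Corollary~\ref{OmegaAction}). Consequently the number of total inflection points is an isomorphism invariant, which I would then compute for the three curves. The Fermat curve $\mathcal{F}_n$ has at least $3n$ of them --- the $n$ points on each coordinate line, where the tangent meets $\mathcal{F}_n$ with multiplicity $n$ (using $p\nmid n$). The Hurwitz curve $\mathcal{H}_{n-1}$ has exactly $3$ total inflection points when $n-1$ is not a power of $p$, by Corollary~\ref{OmegaAction}; and when $n-1=p^t$ it is the Hermitian curve, for which \eqref{degramHn} gives $\deg R=(n-1)^3+1$ with every point of $\supp(R)$ a total inflection, so the count is $(n-1)^3+1$. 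In either case this number differs from $n$, as $n>3$. Finally, Proposition~\ref{flexes} tells us that $\mathcal{C}_n$ has either $n$ or $3n$ total inflection points, the second alternative occurring if and only if $\mathcal{C}_n$ is projectively equivalent to a Fermat curve, which, comparing degrees, forces $2n-1=p^r$.

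Putting these together: if $2n-1$ is not a power of $p$, then $\mathcal{C}_n$ has exactly $n$ total inflection points, whereas $\mathcal{F}_n$ has at least $3n>n$ and $\mathcal{H}_{n-1}$ has $3$ or $(n-1)^3+1$, both $\neq n$; hence $\mathcal{C}_n$ is isomorphic to neither, and $\mathcal{C}=\mathcal{C}_n$ is the required curve. I expect the main obstacle to be the degenerate case $2n-1=p^r$, where $\mathcal{C}_n$ is itself a Fermat curve and so cannot serve as the example. To deal with it I would pass to a Lucas-type curve of another degree: choose a divisor $m$ of $q+1$ with $3\leq m\ne n$ (for instance $m=q+1$, available since $n<\tfrac{q+1}{2}$); then $\mathcal{C}_m$ is again a smooth plane $\F_{q^2}$-maximal curve, now of genus $\binom{m-1}{2}\neq\binom{n-1}{2}$, so it is isomorphic to neither $\mathcal{F}_n$ nor $\mathcal{H}_{n-1}$. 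Moreover, whenever $q+1$ admits such an $m$ with, in addition, $2m-1$ not a power of $p$ and $m\mid\tfrac{p^{r'}+1}{2}$ for some $r'$, Proposition~\ref{flexes} gives $\mathcal{C}_m$ exactly $m$ total inflection points, so $\mathcal{C}_m$ is not projectively equivalent to any Fermat or Hurwitz curve at all --- a genuinely new plane nonsingular maximal curve, which is what the original question asks for.
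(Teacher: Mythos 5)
Your core argument is the paper's own: take $\mathcal{C}=\mathcal{C}_n$, use the fact that an isomorphism between smooth plane curves of degree $n>3$ is induced by a projectivity, and separate the three curves by their numbers of total inflection points via Proposition \ref{flexes}. What you add beyond the paper's one-line proof is a more careful treatment of the boundary cases, and this is genuinely valuable: the paper simply asserts that Proposition \ref{flexes} rules out $\mathcal{C}_n\cong\mathcal{F}_n$ and that the Hurwitz curve ``has no total inflection points''. You correctly note that when $n-1=p^t$ the curve $\mathcal{H}_{n-1}$ is Hermitian with $(n-1)^3+1$ total inflections (still $\neq n,3n$), and, more importantly, that the degenerate case $2n-1=p^r$ is not excluded by the hypotheses $n\mid q+1$, $3<n<\frac{q+1}{2}$ (e.g.\ $p=3$, $q=3^6$, $n=5$); in that case the classification argument inside the proof of Proposition \ref{flexes} makes $\mathcal{C}_n$ itself projectively equivalent to $\mathcal{F}_n$, so the paper's choice of $\mathcal{C}$ does not work there. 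This is a real gap in the paper's proof that your write-up flags and tries to repair.

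Two slips on your side, neither fatal. First, when $n-1$ is not a power of $p$ the curve $\mathcal{H}_{n-1}$ has \emph{no} total inflection points, not three: by Corollary \ref{OmegaAction} the fundamental points have $j(P)=n-1=\deg\mathcal{H}_{n-1}-1$, and the points described in Theorem \ref{TeoWei} have $j(P)\in\{3,5,p^r+1\}$, which can reach the degree $n$ only in the Hermitian case $n-1=p^r$ (and in the one exceptional case $n=5$, $p=7$, where the count is $13$). All of these values differ from $n$ and $3n$, so your comparison still goes through, but the number ``$3$'' is wrong. Second, your patch for the degenerate case takes $\mathcal{C}_m$ with $m=q+1$ and asserts its $\F_{q^2}$-maximality; the paper's maximality statement requires $m\mid\frac{p^r+1}{2}$, and $q+1$ never divides $\frac{p^r+1}{2}$ (if $p^s+1\mid p^r+1$ the quotient is odd), so this is unsupported --- indeed $\mathcal{C}_{q+1}$ would have to be the Hermitian curve by its genus, which is not established and fails in small examples. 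The repair is immediate: in the degenerate case take any smooth plane $\F_{q^2}$-maximal curve of degree different from $n$, e.g.\ the Fermat curve $\mathcal{F}_{q+1}$, whose genus is not $\binom{n-1}{2}$. That settles the corollary as literally stated, although, as you rightly observe, in the degenerate case one no longer produces a curve that is new in the stronger sense of Torres' question.
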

\begin{proof}
Clearly, we have $\mathcal{C} = \mathcal{C}_n$. On the one hand, Proposition \ref{flexes} ensures that $\mathcal{C}_n$ is not isomorphic to $\mathcal{F}_n$. On the other hand, it is enough to observe that $\mathcal{H}_n$ has no total inflection points, and again the result follows by Proposition \ref{flexes}. 
\end{proof}
\section*{Acknowledgements}

The first author  was supported by FAPESP-Brazil, grant 2016/24713-4.
The second  author  was supported by FAPESP-Brazil, grant 2017/04681-3.
The third author  was supported by FAPESP-Brazil, grant 2017/18776-6.

\printindex

\end{document}